\newcommand{\+}{\nobreakdash-}
\renewcommand{\:}{\colon}
\newcommand{\rarrow}{\longrightarrow}
\newcommand{\ot}{\otimes}
\newcommand{\ocn}{\odot}
\newcommand{\tim}{\rightthreetimes}
\newcommand{\bu}{{\text{\smaller\smaller$\scriptstyle\bullet$}}}
\newcommand{\lrarrow}{\mskip.5\thinmuskip\relbar\joinrel\relbar\joinrel
 \rightarrow\mskip.5\thinmuskip\relax}
\DeclareMathOperator{\Hom}{Hom}
\DeclareMathOperator{\Ext}{Ext}
\DeclareMathOperator{\Ctrtor}{Ctrtor}
\DeclareMathOperator{\Ann}{Ann}
\DeclareMathOperator{\Id}{Id}
\DeclareMathOperator{\id}{id}
\DeclareMathOperator{\im}{im}
\DeclareMathOperator{\coker}{coker}
\DeclareMathOperator{\CT}{CT}
\newcommand{\Sets}{\mathsf{Sets}}
\newcommand{\rop}{{\mathrm{op}}}
\newcommand{\modl}{{\operatorname{\mathsf{--mod}}}}
\newcommand{\modr}{{\operatorname{\mathsf{mod--}}}}
\newcommand{\contra}{{\operatorname{\mathsf{--contra}}}}
\newcommand{\discr}{{\operatorname{\mathsf{discr--}}}}
\newcommand{\ctra}{{\operatorname{\mathsf{-ctra}}}}
\newcommand{\A}{\mathfrak A}
\newcommand{\B}{\mathfrak B}
\newcommand{\C}{\mathfrak C}
\newcommand{\D}{\mathfrak D}
\newcommand{\F}{\mathfrak F}
\newcommand{\G}{\mathfrak G}
\newcommand{\I}{\mathfrak I}
\newcommand{\J}{\mathfrak J}
\newcommand{\fK}{\mathfrak K}
\newcommand{\fL}{\mathfrak L}
\newcommand{\fM}{\mathfrak M}
\newcommand{\fN}{\mathfrak N}
\newcommand{\fH}{\mathfrak H}
\renewcommand{\P}{\mathfrak P}
\newcommand{\Q}{\mathfrak Q}
\newcommand{\R}{\mathfrak R}
\renewcommand{\S}{\mathfrak S}
\newcommand{\T}{\mathfrak T}
\newcommand{\U}{\mathfrak U}
\newcommand{\m}{\mathfrak m}
\newcommand{\cC}{\mathcal C}
\newcommand{\cD}{\mathcal D}
\newcommand{\cL}{\mathcal L}
\newcommand{\M}{\mathcal M}
\newcommand{\N}{\mathcal N}
\newcommand{\sA}{\mathsf A}
\newcommand{\sB}{\mathsf B}
\newcommand{\boT}{\mathbb T}
\newcommand{\boZ}{\mathbb Z}
\newcommand{\boQ}{\mathbb Q}
\newcommand{\Section}[1]{\bigskip\section{#1}\medskip}
\theoremstyle{plain}
\newtheorem{thm}{Theorem}[section]
\newtheorem{lem}[thm]{Lemma}
\newtheorem{prop}[thm]{Proposition}
\newtheorem{cor}[thm]{Corollary}
\theoremstyle{definition}
\newtheorem{ex}[thm]{Example}
\newtheorem{rem}[thm]{Remark}
\begin{document}

\title{Contramodules over pro-perfect \\ topological rings}

\author{Leonid Positselski}

\address{Institute of Mathematics of the Czech Academy of Sciences \\
\v Zitn\'a~25, 115~67 Praha~1 (Czech Republic); and
\newline\indent
Laboratory of Algebra and Number Theory \\
Institute for Information Transmission Problems \\
Moscow 127051 (Russia)}

\email{positselski@yandex.ru}

\begin{abstract}
 For four wide classes of topological rings $\R$, we show that all flat
left $\R$\+contramodules have projective covers if and only if all flat
left $\R$\+contramodules are projective if and only if all left
$\R$\+contramodules have projective covers if and only if all
descending chains of cyclic discrete right $\R$\+modules terminate if
and only if all the discrete quotient rings of $\R$ are left perfect.
 Three classes of topological rings for which this holds are
the  complete, separated topological associative rings with a base of
neighborhoods of zero formed by open two-sided ideals such that either
the ring is commutative, or it has a countable base of neighborhoods of
zero, or it has only a finite number of semisimple discrete
quotient rings.
 The fourth class consists of all the topological rings with a base of
neighborhoods of zero formed by open right ideals which have
a closed two-sided ideal with certain properties such that the quotient
ring is a topological product of rings from the previous three classes.
 The key technique on which the proofs are based is the contramodule
Nakayama lemma for topologically T\+nilpotent ideals.
\end{abstract}

\maketitle

\tableofcontents

\section{Introduction}
\medskip

\subsection{{}}
 In a classical paper of Bass~\cite{Bas} (based on his Ph.~D.
dissertation), it was shown that every left module over an associative
ring $R$ has a projective cover if and only if every flat left
$R$\+module is projective.
 Such rings were called \emph{left perfect}, and a number of further
equivalent characterizations of them were provided in the paper
(in particular, one of the conditions equivalent to left perfectness
is that all descending chains of principal right ideals in
$R$ terminate).
 Many years later, Enochs conjectured~\cite{Eno}, and subsequently
Bican, El~Bashir, and Enochs proved~\cite{BBE} (see also~\cite{Bash})
that, over any associative ring, all modules have flat covers.
 This assertion became known as the ``flat cover conjecture/theorem''.

 In our recent paper~\cite{PR}, it is shown that, over any complete,
separated topological associative ring with a countable base of
neighborhoods of zero formed by open right ideals, all left
contramodules have flat covers.
 This provides a contramodule analogue of the result of Bican,
El~Bashir, and Enochs.
 In fact, there are two proofs of the existence of flat covers in
their paper~\cite{BBE}, one following the approach of Bican
and El~Bashir, and the other one based on the results of
Eklof and Trlifaj~\cite{ET}.
 Both of these lines of argumentation are extended to contramodules
over topological rings with a countable base of neighborhoods of
zero in the paper~\cite{PR}.

\subsection{{}}
 The aim of the present paper is to extend the results of Bass'
paper~\cite{Bas} to the contramodule realm.
 One reason why this is interesting is because perfect rings are
relatively rare, while pro-perfect topological rings (for many of
which we prove that all contramodules have projective covers and
all flat contramodules are projective) are more numerous.
 In particular, our results apply to the contramodules over all
commutative pro-perfect topological rings.
 This class of topological rings includes complete Noetherian local
commutative rings and the $S$\+completions of $S$\+almost perfect
commutative rings (as defined in our previous paper~\cite{BP}).

 Here a \emph{pro-perfect} topological ring is a complete, separated
topological ring with a base of neighborhoods of zero formed by open
two-sided ideals such that all its discrete quotient rings are perfect.
 In fact, it is only because of possible problems with non-well-behaved
uncountable projective limits that we do not claim applicability of our
results to \emph{all} pro-perfect topological rings.
 Such problems do not exist for topological rings with a countable base
of neighborhoods of zero, and we find them manageable for
commutative topological rings and topological rings having only
a finite number of semisimple discrete quotient rings.

 Thus three classes of topological rings for which our main results hold
are the complete, separated topological associative rings with a base
of neighborhoods of zero formed by open two-sided ideals such that
either (a)~the ring is commutative, or (b)~it has a countable base of
neighborhoods of zero, or (c)~it has only a finite number of semisimple
discrete quotient rings.

\subsection{{}} \label{introd-ideal-H}
 The assumption of existence of a base of neighborhoods of zero formed
by open two-sided ideals is somewhat restrictive, given that
the definition of a left contramodule over a topological ring only
requires a base of neighborhoods of zero consisting of open
right ideals.
 Some, though not all, of our main results are applicable to topological
rings with a base of neighborhoods of zero formed by open right ideals.
 In particular, we prove the existence of projective covers and
projectivity of flat contramodules over many topological rings $\R$
having a topologically T\+nilpotent closed two-sided ideal $\fH$
such that the quotient ring $\R/\fH$ is a product of simple Artinian
discrete rings (endowed with the product topology).

 Here a closed ideal $\fH$ in a topological ring $\R$ is said to be
\emph{topologically left T\+nilpotent} if, for any sequence of
elements $a_1$, $a_2$, $a_3$,~\dots\ in $\fH$, the sequence of products
$a_1$, $a_1a_2$, $a_1a_2a_3$,~\dots\ converges to zero in~$\R$.
 Among our main technical tools, we present two versions of
the Nakayama lemma for topologically T\+nilpotent ideals.
 Firstly, a closed left ideal $\fH\subset\R$ is topologically left
T\+nilpotent if and only if any nonzero discrete right $\R$\+module
has a nonzero submodule annihilated by~$\fH$.
 Secondly, a closed left ideal $\fH$ is topologically left T\+nilpotent
if and only if for any nonzero left $\R$\+contramodule $\C$
the contraaction map $\fH[[\C]]\rarrow\C$ is not surjective.

\subsection{{}}
 Moreover, there is a wider fourth class of topological rings $\R$,
containing the three classes~(a), (b), and~(c), for which we show that
all flat left $\R$\+contramodules have projective covers if and only if
all flat left $\R$\+contramodules are projective if and only if all
left $\R$\+contramodules have projective covers if and only if all
descending chains of cyclic discrete right $\R$\+modules terminate if
and only if all the discrete quotient rings of $\R$ are left perfect.
 This class~(d) consists of complete, separated topological rings with
a base of neighborhoods of zero formed by open right ideals having
a topologically left T\+nilpotent closed ideal $\fK\subset\R$ such
that the quotient ring $\R/\fK$ is a topological product of topological
rings satisfying (a), (b), or~(c).
 Topological rings satisfying the condition~(d) do not need to have
a base of neighborhoods of zero formed by open two-sided ideals.

 We also discuss the notion of the \emph{topological Jacobson radical}
of a topological ring $\R$ with a base of neighborhoods of zero formed
by open right ideals (cf.~\cite{IMR}).
 The topological Jacobson radical of $\R$ is defined as
the intersection of all the open maximal right ideals in~$\R$.
 Generally speaking, the topological Jacobson radical of $\R$ is
a closed two-sided ideal containing the usual Jacobson
radical of the ring $\R$ viewed as an abstract ring.
 For a topological ring $\R$ with a two-sided ideal $\fH$ of the kind
mentioned above in Section~\ref{introd-ideal-H}, both the topological
and the abstract nontopological Jacobson radicals of $\R$ coincide
with~$\fH$.

\subsection{{}}
 This paper was originally motivated by the idea to apply contramodules
in the study of covers and direct limits in module categories, and
more generally, in additive and abelian categories.
 Such applications to covers and direct limits in the categorical
tilting context and beyond, and in particular, for injective homological
ring epimorphisms, are discussed in a companion paper~\cite{BP3}. 
 Furthermore, the main results of this paper provide supporting evidence
for the discussion of conjectures about topologically perfect
topological rings in another companion paper~\cite{PS3}.

 Finally, a general proof of the assertion that a direct limit of
projective contramodules is projective if it has a projective cover
is given in the paper~\cite{BPS}.
 Based on this result, we prove in~\cite{PS3} that, for a complete
separated topological ring $\R$ with a base of neighborhoods of zero
formed by open right ideals, all flat left $\R$\+contramodules are
projective if an only if all flat left $\R$\+contramodules have
projective covers if and only if all left $\R$\+contramodules have
projective covers.

\medskip\noindent
\textbf{Acknowledgment.}
 I~owe a debt of gratitude to Silvana Bazzoni for her invaluable
participation in the early stages of my work on this paper,
which was started when I~was visiting her in Padova in
January--February~2018.
 I~would like to thank Jan Trlifaj\- and Jan \v St\!'ov\'\i\v cek
for helpful discussions.
 Finally, I~wish to thank an anonymous referee for reading
the manuscript carefully and making several helpful suggestions.
 The author was supported by the Israel Science Foundation
grant~\#\,446/15, research plan RVO:~67985840, and
the GA\v CR project 20-13778S.

\Section{Preliminaries on Topological Rings}
\label{preliminaries-secn}

 Throughout this paper, by ``direct limits'' in a category we mean
inductive limits indexed by directed posets.
 Otherwise, these are known as the directed or filtered colimits.

 The material in Sections~\ref{prelim-linear-topol}\+-%
\ref{prelim-discrete-modules} below is well-known (some relevant
references are~\cite[Chapter~VI]{St}, the books~\cite{RD,AGM},
and the paper~\cite{Beil}).
 So is the example in Section~\ref{prelim-endomorphism-ring}
(where the book~\cite[Section~107]{Fuc} can be used as a reference).
 Sections~\ref{prelim-linear-comb}\+-\ref{prelim-reductions} go
back to~\cite[Remark~A.3]{Psemi} and~\cite[Section~1.2]{Pweak};
for later expositions, see~\cite[Sections~2.1 and~2.3]{Prev},
\cite[Sections~1.2 and~5]{PR}, \cite[Sections~6\+-7]{PS},
and~\cite[Section~2]{Pcoun}.
 The counterexample in Section~\ref{prelim-strongly-closed-subgroups}
comes from~\cite{RD} or~\cite{AGM}, while the rest of the material of
Sections~\ref{prelim-strongly-closed-subgroups}\+-%
\ref{prelim-strongly-closed-ideals} may be somewhat new
(it is implicit in~\cite[Sections~1.3 and~B.4]{Pweak}).

 For a basic background discussion of topological abelian groups and
vector spaces with linear topology, going into many details and
correcting some errors in~\cite{Beil}, see the very recent
preprint~\cite{Pextop}.

\subsection{Abelian groups with linear topology}
\label{prelim-linear-topol}
 A ``topological abelian group'' in this paper will always mean
a topological abelian group with a base of neighborhoods of zero
formed by open subgroups.
 Any nonempty collection of subgroups $B$ in an abelian group $A$
such that for any two subgroups $U'\in B$ and $U''\in B$ there exists
a subgroup $U\in B$, \ $U\subset U'\cap U''$, forms a base of
neighborhoods of zero in a (uniquely defined) topology on $A$
compatible with the abelian group structure.

 The \emph{completion} $\A=A_B\sphat\,$ of a topological
abelian group $A$ is the abelian group $\A=\varprojlim_{U\in B}A/U$
endowed with the \emph{projective limit topology}, in which a base
of neighborhoods of zero $\B$ in $\A$ is formed by the kernels
$\U=U\sphat\,$ of the projection maps $\A\rarrow A/U$.
 The topological group $\A=A_B\sphat$ does not depend on the choice
of a particular base of neigborhoods of zero $B$ in a topological
group~$A$.

 A topological abelian group $A$ is said to be \emph{separated} if
the completion map $A\rarrow\A$ is injective, and \emph{complete}
if it is surjective.
 The projection maps $\A\rarrow A/U$ induce isomorphisms
$\A/\U\rarrow A/U$, so the completion $\A$ of a topological abelian
group $A$ is always separated and complete (in the projective limit
topology of~$\A$).
 The completion map $A\rarrow\A$ is an isomorphism of topological
abelian groups (i.~e., an additive homeomorphism) whenever $A$ is
separated and complete.

\subsection{Subgroup and quotient group topologies}
\label{prelim-quotient-topologies}
 Let $A$ be a topological abelian group and $A'\subset A$ be a subgroup.
 Then the group $A'$ can be endowed with the topology induced from~$A$.
 If $B$ is a collection of subgroups forming a base of neighborhoods of
zero in $A$, then the collection of subgroups
$B'=\{A'\cap U\mid U\in B\}$ forms a base of neighborhoods of zero
in~$A'$.

\begin{lem} \label{subgroup-topology-lem}
\textup{(a)} If a topological abelian group $A$ is separated, then
any subgroup $A'\subset A$ is separated in the induced topology. \par
\textup{(b)} Let\/ $\A$ be a complete, separated topological
abelian group.
 Then a subgroup\/ $\A'\subset\A$ is complete in the induced topology
if and only if it is closed in~$\A$. \par
\textup{(c)} More generally, if\/ $\A$ is separated and complete, then
for any subgroup $A'\subset\A$ the induced map between the completions
$A'{}\sphat\,\rarrow\A\sphat=\A$ provides a topological group
isomorphism of the completion $A'{}\sphat\,$ of the group  $A'$\
with the closure\/ $\A'\subset\A$ of the subgroup $A'\subset\A$
(where the topology on $A'$ and\/ $\A'$ is induced from\/~$\A$).
\end{lem}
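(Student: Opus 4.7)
The plan is to handle the three parts in order, relying only on the definitions of completion and induced topology reviewed just above.

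For part~(a), I would simply observe that if $B$ is a base of neighborhoods of zero in $A$, then $A'$ is separated in the induced topology precisely when $\bigcap_{U\in B}(A'\cap U)=0$. But the left-hand side equals $A'\cap\bigcap_{U\in B}U$, which is zero whenever $A$ itself is separated.

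For part~(b), I would prove the two implications separately. \emph{Closed implies complete:} given any compatible system $(\bar x_\U)\in\varprojlim_{\U\in\B}\A'/(\A'\cap\U)$, the same representatives viewed in $\A/\U$ define a compatible system in $\varprojlim_{\U}\A/\U=\A$, producing an element $x\in\A$. Since each $\bar x_\U$ has a representative in $\A'$, the element $x$ lies in every coset $\A'+\U$; hence $x$ belongs to the closure of $\A'$ in $\A$, and by hypothesis $x\in\A'$. By construction $x$ reduces to $\bar x_\U$ in $\A'/(\A'\cap\U)$, showing that $\A'\to\varprojlim_{\U}\A'/(\A'\cap\U)$ is surjective. \emph{Complete implies closed:} for $x$ in the closure of $\A'$, choose $x_\U\in(x+\U)\cap\A'$ for each $\U\in\B$. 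If $\U\subset\U'$, then $x_\U-x_{\U'}\in\U'\cap\A'$, so the classes $\bar x_\U\in\A'/(\A'\cap\U)$ form a compatible system. By completeness of $\A'$ this lifts to $x'\in\A'$, and then $x'-x=(x'-x_\U)+(x_\U-x)\in\U$ for every $\U\in\B$. Since $\A$ is separated, $x'=x$, and so $x\in\A'$.

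For part~(c), let $\A'$ denote the closure of $A'$ in $\A$. The key observation is that for every $\U\in\B$ the natural map
\[
A'/(A'\cap\U)\lrarrow\A'/(\A'\cap\U)
\]
is an isomorphism: injectivity follows from $A'\cap(\A'\cap\U)=A'\cap\U$, and surjectivity from the density of $A'$ in $\A'$, which guarantees that every element of $\A'$ differs from some element of $A'$ by an element of $\U$. Passing to the projective limit over $\U\in\B$ yields a topological isomorphism between the completion $A'{}\sphat$ and $\varprojlim_{\U}\A'/(\A'\cap\U)$. By part~(b), the closed subgroup $\A'\subset\A$ is complete in the induced topology, so this projective limit is identified with $\A'$ itself; composing these identifications gives the required topological group isomorphism $A'{}\sphat\rarrow\A'$, which by construction is induced by the inclusion $A'\hookrightarrow\A$.

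I do not expect a serious obstacle here; the only minor point of care is checking that all the maps involved are not merely algebraic but also topological isomorphisms, which is automatic because every group in sight carries its projective limit topology with respect to a compatible system of open subgroups.
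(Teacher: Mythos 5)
Your proof is correct. The paper states this lemma without proof (it is marked as well-known, with the references given at the start of Section 1), and your argument is precisely the standard one that the author is implicitly relying on: the identification of the kernel of the completion map with $\bigcap_U(A'\cap U)$ for (a), the two-way comparison of compatible systems in $\varprojlim\A'/(\A'\cap\U)$ and $\varprojlim\A/\U$ for (b), and the level-wise isomorphism $A'/(A'\cap\U)\cong\A'/(\A'\cap\U)$ followed by passage to the projective limit for (c); all steps check out, including the topological identifications.
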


\begin{proof}
 See, e.~g., \cite[Lemma~1.1]{Pextop}.
\end{proof}

 Let $A$ be a topological abelian group, $A'\subset A$ be a subgroup,
and $A''=A/A'$ be the quotient group.
 Then the group $A''$ can be endowed with the quotient topology, in
which a subset of $A''$ is open if and only if its preimage in $A$ is.
 If $B$ is a collection of subgroups forming a base of neighborhoods of
zero in $A$, then the collection of subgroups $B''=\{(A'+U)/A'\mid
U\in B\}$ forms a base of neighborhoods of zero in~$A''$.

\begin{lem} \label{quotient-group-topology-lem}
\textup{(a)} If a topological abelian group $A$ is separated, then
the quotient group $A/A'$ is separated in the quotient topology
if and only if the subgroup $A'$ is closed in~$A$. \par
\textup{(b)} Let\/ $\A$ be a complete, separated topological abelian
group with a countable base of neighborhoods of zero formed by
open subgroups, and let\/ $\A'\subset\A$ be a closed subgroup.
 Then the quotient group\/ $\A''=\A/\A'$ is complete in
the quotient topology.
\end{lem}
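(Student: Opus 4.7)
For part~(a), the plan is to identify the closure of $A'$ in $A$ explicitly and compare it with the separatedness condition on the quotient. In any topological abelian group whose topology is given by a base $B$ of open subgroups, the closure of a subgroup is $\overline{A'}=\bigcap_{U\in B}(A'+U)$, because $x\in\overline{A'}$ iff every coset $x+U$ meets~$A'$. On the other hand, the subgroups $(A'+U)/A'$ form a base of neighborhoods of zero in the quotient topology on $A''=A/A'$, so $A''$ is separated iff $\bigcap_{U\in B}(A'+U)=A'$, iff $A'$ equals its closure. Both directions of~(a) fall out of this observation; note that the separatedness of~$A$ itself plays no role in the argument.

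For part~(b), I would fix a countable descending base $\U_1\supset\U_2\supset\U_3\supset\cdots$ of open subgroups in $\A$, so that the subgroups $\U_n''=(\A'+\U_n)/\A'$ form a countable base in $\A''=\A/\A'$ and the canonical isomorphisms $\A''/\U_n''\cong\A/(\A'+\U_n)$ identify the completion of $\A''$ with $\varprojlim_n\A/(\A'+\U_n)$. To prove completeness, I would start from an arbitrary compatible sequence $(\bar x_n)$, with $\bar x_n\in\A/(\A'+\U_n)$, and build a Cauchy sequence $(x_n)$ in $\A$ lifting it, as follows. Having chosen $x_1,\ldots,x_n$ with $x_{k+1}-x_k\in\U_k$ for $k<n$, pick any lift $x_{n+1}'$ of~$\bar x_{n+1}$; compatibility gives $x_{n+1}'-x_n=\alpha_n+u_n$ with $\alpha_n\in\A'$ and $u_n\in\U_n$, and I set $x_{n+1}=x_{n+1}'-\alpha_n$. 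This keeps $x_{n+1}$ a lift of $\bar x_{n+1}$ while forcing $x_{n+1}-x_n=u_n\in\U_n$. Completeness and separatedness of $\A$ produce a unique limit $x\in\A$; using that each open subgroup $\U_n$ is automatically closed one checks $x-x_n\in\U_n$, so $\bar x=x+\A'$ maps to $\bar x_n$ in every $\A/(\A'+\U_n)$, as required.

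The step I expect to be the crux is the inductive lifting: one has to perform countably many successive modifications by elements of~$\A'$ in order to upgrade an a~priori arbitrary compatible system of lifts into an actual Cauchy sequence in~$\A$. The countability of the base enters precisely here, and correspondingly the conclusion of~(b) is known to fail without it---the analogous argument based on Cauchy filters meets the classical obstruction that quotients of complete topological groups by closed subgroups need not be complete in general.
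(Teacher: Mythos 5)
Your proof is correct, and part~(b) is essentially the paper's argument: the paper applies the short exact sequences $0\to\A'/(\A'\cap\U_n)\to\A/\U_n\to\A''/\U''_n\to0$ and invokes the exactness of countable projective limits of surjective systems, while you inline the proof of that exactness as the explicit successive-correction lifting (the "crux" you single out is precisely where the paper cites this Mittag--Leffler-type fact, together with Lemma~\ref{subgroup-topology-lem}(b) in place of your direct use of completeness of $\A$). Part~(a), which the paper leaves unproved, is handled correctly by the standard identification of the closure of $A'$ with $\bigcap_{U\in B}(A'+U)$.
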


\begin{proof}
 We will only prove part~(b).
 Let $\B$ be a countable base of neighborhoods of zero consisting of
open subgroups in~$\A$.
 Then for any $\U\in\B$ we have a short exact sequence of abelian groups
$$
 0\lrarrow \A'/\U'\lrarrow\A/\U\lrarrow\A''/\U''\lrarrow0,
$$
where $\U'=\A'\cap\U$ and $\U''=(\A'+\U)/\A'$.
 Now $(\A'/\U')_{\U\in\B}$ is a projective system of abelian groups and
surjective morphisms between them, indexed by a countable directed
poset~$\B$.
 Hence the sequence remains exact after the passage to the projective
limits,
$$
 0\lrarrow \A'{}_{\B'}\sphat\,\lrarrow\A_\B\sphat\,\lrarrow
 \A''{}_{\B''}\sphat\,\lrarrow0,
$$
where $\B'=\{\U'\mid\U\in\B\}$ and $\B''=\{\U''\mid\U\in\B\}$.
 By assumption, we have $\A=\A_\B\sphat\,$; by
Lemma~\ref{subgroup-topology-lem}(b), $\A'=\A'{}_{\B'}\sphat\,$.
 Thus the completion map $\A''\rarrow\A''{}_{\B''}\sphat\,$ is
bijective, that is, the topological abelian group $\A''$ is
(separated and) complete.

 For a generalization, see~\cite[Proposition~1.4]{Pextop}.
\end{proof}

 Without the countability assumption, the assertion of
Lemma~\ref{quotient-group-topology-lem}(b) is \emph{not} true.
 See Section~\ref{prelim-strongly-closed-subgroups} for further
discussion and counterexamples.

\subsection{Topological rings}  \label{prelim-topol-rings}
 In this paper, the word ``ring'' means ``an associative ring with
unit'' by default.
 Unless otherwise mentioned, all ring homomorphisms are supposed
to preserve units, and all modules are presumed to be unital.
 When considering rings without unit or subrings without unit, as
we will at some point in Section~\ref{t-nilpotent-secn}, we will always
explicitly refer to them as being ``without unit''.

 Given an (associative and unital) ring $R$, we denote the abelian
category of (arbitrary unital) left $R$\+modules by $R\modl$ and
the abelian category of right $R$\+modules by $\modr R$.
 The ring with the opposite multiplication to a ring $R$ is denoted by
$R^\rop$.

 In this paper we are interested in topological associative rings $R$
such that open right ideals $I\subset R$ form a base of neighborhoods
of zero in~$R$.
 A nonempty collection of right ideals $B$ in a ring $R$ forms a base of
neighborhoods of zero in a topology compatible with the ring structure
on $R$ if and only if it satisfies the following conditions
(cf.~\cite[Section~VI.4]{St} and~\cite[Remark~1.1(ii), Claim~1.4,
and Lemma~1.4]{Beil}):
\begin{enumerate}
\renewcommand{\theenumi}{\roman{enumi}}
\item for any two right ideals $I'$ and $I''\in B$, there exists
a right ideal $J\in B$ such that $J\subset I'\cap I''$; and
\item for any right ideal $I\in B$ and any element $r\in R$,
there exists a right ideal $J\in B$ such that $rJ\subset I$.
\end{enumerate}

 The \emph{completion} $\R=R_B\sphat\,$ of a topological ring $R$
is the abelian group $\R=\varprojlim_{I\in B}R/I$ endowed with
the projective limit topology (in which a base of neighborhoods
of zero $\B$ in $\R$ is formed by the kernels $\I=I\sphat\,\subset\R$
of the projection maps $\R\rarrow R/I$).
 One readily checks, using the conditions (i) and~(ii), that there
exists a unique associative ring structure on $\R$ that is continuous
with respect to the projective limit topology and such that
the natural map $R\rarrow\R$ is a ring homomorphism.
 Given two elements $r'=(r'_I)_{I\in B}$ and $r''=(r''_I)_{I\in B}\in\R$,
in order to compute the $I$\+component $r_I$ of the product
$r=(r_I)_{I\in B}\in\R$, one has to find an open right ideal $J\in B$
such that $J\subset I$ and $r'_IJ\subset I$; then one can set
$r_I=r'_Ir''_J+I$.

 The open subsets $\I=I\sphat\,$ are right ideals in~$\R$, so
the topological ring $\R$ has a base of neighborhoods of zero
consisting of open right ideals.
 When the base of neighborhoods of zero $B$ in $R$ consists of open
two-sided ideals, the topological ring $\R$ can be simply defined as
the projective limit of (discrete) rings~$R/I$.
 Then the open subsets $\I=I\sphat\,\subset\R$ are two-sided ideals.

 A topological ring $R$ is said to be \emph{separated} (resp.,
\emph{complete}) if it is separated (resp., complete) as a topological
abelian group.

\subsection{Discrete modules}  \label{prelim-discrete-modules}
 Let $R$ be a topological ring with a base of neighborhoods of zero
formed by open right ideals.
 A right $R$\+module $\N$ is said to be \emph{discrete} if for every
element $b\in\N$ the annihilator $\Ann_R(b)=\{r\in R\mid br=0\}$
is an open right ideal in~$\R$.
 The annihilator of an element in a right $R$\+module is always a right
ideal in $R$, so topological rings with a base of neighborhoods of zero
formed by open right ideals are a natural setting for considering
discrete right modules.

 The full subcategory of discrete right $R$\+modules
$\discr R\subset\modr R$ is a hereditary pretorsion class in the abelian
category of right $R$\+modules $\modr R$, and all hereditary pretorsion
classes in $\modr R$ appear in this way~\cite[Lemma~VI.4.1 and
Proposition~VI.4.2]{St}.
 Viewed as an abstract category, the category $\discr R$ is
a Grothendieck abelian category.
 So, in particular, the abelian category $\discr R$ is complete and
cocomplete, has exact direct limits, and an injective cogenerator.

 One readily checks that any discrete right $R$\+module has a unique
discrete right $\R$\+module structure compatible with its $R$\+module
structure (where $\R=R\sphat\,$ denotes the completion of
the topological ring~$R$).
 Thus the abelian categories of discrete right $R$\+modules and
discrete right $\R$\+modules are naturally equivalent (in fact,
isomorphic), $\discr R\cong\discr\R$.

\subsection{Convergent formal linear combinations}
\label{prelim-linear-comb}
 Given an abelian group $A$ and a set $X$, we denote by
$A[X]=A^{(X)}$ the direct sum of $X$ copies of the abelian group $A$,
viewed as the group of all finite formal linear combinations
$\sum_{x\in X}a_xx$ of elements of $X$ with the coefficients in~$A$.
 A formal linear combination $\sum_{x\in X} a_xx$ belongs to $A[X]$
if and only if the set of all indices $x\in X$ for which $a_x\ne0$
is finite.

 Given a separated and complete topological abelian group $\A$ with
a base of neighborhoods of zero $\B$ consisting of open subgroups,
and a set $X$, we denote by $\A[[X]]$ the abelian group
$\varprojlim_{\U\in\B} (\A/\U)[X]$.
 Clearly, the group $\A[[X]]$ does not depend on the choice of
a particular base of neighborhoods of zero $\B$ in~$\A$. 
 We interpret $\A[[X]]$ as the group of all infinite formal linear
combinations $\sum_{x\in X}a_xx$ of elements of $X$ with 
the coefficients in $\A$ forming an \emph{$X$\+indexed family
of elements in $\A$ converging to zero in the topology of\/~$\A$}.
 This means that the subgroup $\A[[X]]\subset\A^X$ consists of
all the infinite formal linear combinations $\sum_{x\in X}a_xx$ such
that, for every open subgroup $\U\subset\A$, one has $a_x\in\U$
for \emph{all but a finite subset of indices} $x\in X$.

 The map assigning to a set $X$ the abelian group $\A[[X]]$ extends
naturally to a covariant functor from the category of sets to
the category of abelian groups.
 Given a map of sets $f\:X\rarrow Y$, one defines the induced map
$\A[[f]]\:\A[[X]]\rarrow\A[[Y]]$ by the rule
$\sum_{x\in X}a_xx\longmapsto\sum_{y\in Y}
\bigl(\sum_{f(x)=y}a_x\bigr)y$,
where the sum of elements~$a_x$ in the parentheses is understood
as the limit of finite partial sums in the topology of~$\A$.
 Such a limit is unique and exists because the topological abelian
group $\A$ is separated and complete, while the family of elements
$(a_x)_{x\in X}$, and consequently its subfamily indexed by all
$x\in X$ with $f(x)=y$ for a fixed $y\in Y$, converges to zero in~$\A$.

\subsection{The monad structure}
 Let $\R$ be a complete, separated topological ring with a base of
neighborhoods of zero formed by open right ideals.
 Let us consider the functor $X\longmapsto\R[[X]]$ as taking values
in the category of sets; so it becomes an endofunctor
$\boT_\R=\R[[{-}]]\:\Sets\rarrow\Sets$.
 The key observation is that the functor $\boT_\R$ has a natural
structure of a \emph{monad} on the category of
sets~\cite[Remark~A.3]{Psemi}, \cite[Section~1.2]{Pweak},
\cite[Section~2.1]{Prev}, \cite[Sections~1.1\+-1.2 and~5]{PR},
\cite[Section~1]{Pper}, \cite[Section~6]{PS}.

 This means that the functor $\boT_\R$ is endowed with natural
transformations $\epsilon\:\Id\rarrow\boT_\R$ and
$\phi\:\boT_\R\circ\boT_\R\rarrow\boT_\R$ satisfying the monad
equations (of unitality and associativity).
 The monad unit $\epsilon_X\:X\rarrow\R[[X]]$ is the ``point measure''
map, assigning to an element $x_0\in X$ the (finite) formal linear
combination $\sum_{x\in X}r_xx\in\R[[X]]$, where $r_{x_0}=1$
and $r_x=0$ for all $x\ne x_0$.
 The monad multiplication $\phi_X\:\R[[\R[[X]]]]\rarrow\R[[X]]$ is
the ``opening of parentheses'' map, assigning a formal linear
combination to a formal linear combination of formal
linear combinations.

 Given a set $X$ and an element $\mathbf r\in\R[[\R[[X]]]]$, computing
the element $\phi_X(\mathbf r)\in\R[[X]]$ involves opening
the parentheses, computing the products of pairs of elements in
the ring $\R$, and then computing the infinite sums.
 The coefficient~$t_x$ of $\phi(\mathbf r)=\sum_{x\in X}t_xx$ at
an element $x\in X$ is an infinite sum of products of pairs of elements
in $\R$, understood as the limit of finite partial sums in the topology
of~$\R$.
 Thus it is crucial for the definition of~$\phi_X$ that $\R$ is
separated and complete, and that all the infinite sums involved
converge.
 The latter is guaranteed by the assumption that open right ideals form
a base of neighborhoods of zero in~$\R$.

 Indeed, let $Y$ denote the set $\R[[X]]$; then we have
$\mathbf r=\sum_{y\in Y}r_yy$ for some $r_y\in\R$,
and $y=\sum_{x\in X}s_{y,x}x$ for all $y\in Y$ and some
$s_{y,x}\in\R$.
 For any $x\in X$, the coefficient $t_x$ is to be computed as
$t_x=\sum_{y\in Y}r_ys_{y,x}\in\R$.
 In order to show that this sum converges in the topology of $\R$, we
have to check that, for every open right ideal $\I\subset\R$,
the product $r_ys_{y,x}$ belongs to $\I$ for all but a finite set of
indices $y\in Y$.
 Now, one has $r_ys_{y,x}\in\I$ whenever $r_y\in\I$; and there is
only a finite set of indices~$y$ with $r_y\notin\I$, because
$\mathbf r\in\R[[Y]]$.
 So the coefficient $t_x\in\R$ is well-defined for every $x\in X$.
 In order to check that $\sum_{x\in X}t_xx\in\R[[X]]$, that is
$t_x\in\I$ for all but a finite set of indices $x\in X$, one has to
use the condition~(ii) from Section~\ref{prelim-topol-rings}.

\subsection{Contramodules}  \label{prelim-contramodules}
 Let $\R$ be a complete, separated topological ring with a base of
neighborhoods of zero formed by open right ideals.
 By the definition, a left $\R$\+contramodule is an algebra or module
(depending on the terminology) over the monad $\boT_\R\:X
\longmapsto\R[[X]]$ on the category of sets.

 This means that a left $\R$\+contramodule $\C$ is a set endowed
with a map of sets $\pi_\C\:\R[[\C]]\rarrow\C$, called 
the \emph{left contraaction map}.
 The map~$\pi_\C$ must satisfy the equations of \emph{contraunitality},
telling that the composition $\pi_\C\epsilon_\C$ with the monad unit
map~$\epsilon_\C$ is the identity map~$\id_\C$,
$$
 \C\rarrow\R[[\C]]\rarrow\C,
$$
and \emph{contraassociativity}, asserting that the two maps
$\R[[\R[[\C]]]]\rightrightarrows\R[[\C]]$, one of which is the monad
multiplication map~$\phi_\C$ and the other one is the map
$\R[[\pi_\C]]$ induced by~$\pi_\C$, should have equal compositions
with the contraaction map~$\pi_\C$,
$$
 \R[[\R[[\C]]]]\rightrightarrows\R[[\C]]\rarrow\C.
$$
 We denote the category of left $\R$\+contramodules by
$\R\contra$.

 In particular, any associative ring $R$ can be considered as
a topological ring with the discrete topology.
 In this case, we have $R[[X]]=R[X]$, and a left $R$\+contramodule
is the same thing as a left $R$\+module~\cite[Section~6.1]{PS}.
 So the above definition of an $\R$\+contramodule, restricted to
the particular case when the topological ring $\R=R$ is discrete,
provides a fancy way to define the familiar notion of a module over
an associative ring.

 Now, for any complete, separated topological associative ring $\R$
with a base of neighborhoods of zero formed by open right ideals,
and for any left $\R$\+contramodule $\C$, one can compose
the contraaction map $\pi_\C\:\R[[\C]]\rarrow\C$ with the identity
embedding $\R[\C]\rarrow\R[[\C]]$ of the set of all finite formal linear
combinations into the set of all convergent infinite ones.
 This defines a natural structure of an algebra/module over the monad
$X\longmapsto\R[X]$ on the set~$\C$, which means
a left $\R$\+module structure.
 Thus all left $\R$\+contramodules have underlying structures of left
modules over the ring $\R$, viewed as an abstract (nontopological) ring.
 We have constructed the forgetful functor $\R\contra\rarrow\R\modl$.
 In particular, it means that all left $\R$\+contramodules, which were
originally defined as only sets endowed with a contraaction map, are 
actually abelian groups.

 The category $\R\contra$ is abelian~\cite[Lemma~1.1]{Pper}, and
the forgetful functor $\R\contra\rarrow\R\modl$ is exact.
 The category $\R\contra$ is also complete and cocomplete, with
the forgetful functor $\R\contra\rarrow\R\modl$ preserving
infinite products (but not coproducts).
 Consequently, infinite products (but, generally speaking, not
coproducts) are exact in $\R\contra$.
 Given two left $\R$\+contramodules $\C$ and $\D$, we denote by
$\Hom^\R(\C,\D)$ the abelian group of morphisms $\C\rarrow\D$
in $\R\contra$.

 For any set $X$, the map $\pi_{\R[[X]]}=\phi_X$ endows
the set/abelian group $\R[[X]]$ with the structure of a left
$\R$\+contramodule.
 It is called the \emph{free} left $\R$\+contramodule generated by
a set~$X$.
 For any left $\R$\+contramodule $\C$, morphisms $\R[[X]]\rarrow
\C$ in the category $\R\contra$ correspond bijectively to maps of
sets $X\rarrow\C$,
$$
 \Hom^\R(\R[[X]],\C)\cong\Hom_\Sets(X,\C).
$$
 Hence free left $\R$\+contramodules are projective objects of
the category $\R\contra$.
 There are also enough of them: for any left $\R$\+contramodule $\C$,
the contraaction map $\pi_\C\:\R[[\C]]\rarrow\C$ is
an $\R$\+contramodule morphism presenting $\C$ as a quotient
contramodule of the free left $\R$\+contramodule $\R[[\C]]$.
 So the abelian category $\R\contra$ has enough projectives, and
a left $\R$\+contramodule is projective if and only if it is
a direct summand of a free one.

\subsection{Contratensor product}  \label{prelim-contratensor}
 As above, we denote by $\R$ a complete, separated topological ring
with a base of neighborhoods of zero formed by open right ideals.
 Some of the simplest examples of non-free left $\R$\+contramodules
are obtained by dualizing discrete right $\R$\+modules.

 Let $A$ be an associative ring, and let $\N$ be an $A$\+$\R$\+bimodule
whose right $\R$\+module structure is that of a discrete right
$\R$\+module.
 Let $V$ be a left $A$\+module.
 Then the induced left $\R$\+module structure of the abelian group
$\Hom_A(\N,V)$ extends naturally to a left $\R$\+contramodule structure.
 Indeed, to construct a left $\R$\+contraaction map for the set
$\D=\Hom_A(\N,V)$, consider an element $\mathbf r=
\sum_{d\in\D}r_dd\in\R[[\D]]$.
 Set $f=\pi_\D(\mathbf r)$ to be the left $A$\+module map
$f\:\N\rarrow V$ taking any element $b\in\N$ to the element
$\sum_{d\in\D}d(br_d)\in V$,
$$
 \pi_\D\left(\sum\nolimits_{d\in\D}r_dd\right)(b) =
 \sum\nolimits_{d\in\D} d(br_d),
$$
where the sum in the right-hand side is finite, because one has
$br_d=0$ for all but a finite number of elements $d\in\D$.
 Indeed, the annihilator ideal $\Ann_\R(b)\subset\R$ is open by
assumption, and consequently, it has to contain the coefficient~$r_d$
for all but a finite number of elements $d\in\D$.

 Let $\N$ be a discrete right $\R$\+module and $\C$ be a left
$\R$\+contramodule.
 The \emph{contratensor product} $\N\ocn_\R\C$ is an abelian group
defined as the cokernel of (the difference of) a natural pair of abelian
group homomorphisms
$$
 \N\ot_\boZ\R[[\C]]\rightrightarrows\N\ot_\boZ\C.
$$
 Here one of the maps $\N\ot_\boZ\R[[\C]]\rarrow\N\ot_\boZ\C$ is
just the map $\N\ot\pi_\C$ induced by the contraaction map
$\pi_\C\:\R[[\C]]\rarrow\C$, while the other map is the composition
$\N\ot_\boZ\R[[\C]]\rarrow\N[\C]\rarrow\N\ot_\boZ\C$, where
(following our general notation system) $\N[\C]$ denotes the group
of all finite formal linear combinations of elements of $\C$ with
the coefficients in~$\N$.
 The map $\N\ot_\boZ\R[[\C]]\rarrow\N[\C]$, induced by
the right action map $\N\ot_\boZ\R\rarrow\N$, is well-defined due to
the assumption that $\N$ is a discrete right $\R$\+module.
 The map $\N[\C]\rarrow\N\ot_\boZ\C$ is just the obvious one,
taking a finite formal linear combination $\sum_{c\in\C} b_cc$,
\ $b_c\in\N$, to the tensor $\sum_{c\in\C}b_c\ot c$.

 Essentially by the definition, the contratensor product is
a quotient group of the tensor product: for any discrete right
$\R$\+module $\N$ and any left $\R$\+contramodule $\C$, there is
a natural surjective map of abelian groups
$$
 \N\ot_\R\C\twoheadrightarrow\N\ocn_\R\C.
$$

 For any $A$\+$\R$\+bimodule $\N$ whose right $\R$\+module structure
is that of a discrete right $\R$\+module, any left $\R$\+contramodule
$\C$, and any abelian group $V$, there is a natural adjunction
isomorphism of abelian groups~\cite[Section~5]{PR}
$$
 \Hom_A(\N\ocn_\R\C,\>V)\cong\Hom^\R(\C,\Hom_A(\N,V)).
$$
 The functor of contratensor product
$\ocn_\R\:\discr\R\times\R\contra\rarrow\boZ\modl$
preserves colimits (i.~e., is right exact and preserves coproducts) in
both its arguments.
 For any discrete right $R$\+module $\N$ and any set $X$, there is
a natural isomorphism of abelian groups
$$
 \N\ocn_\R\R[[X]]\cong\N[X].
$$

\subsection{Change of scalars} \label{prelim-change-of-scalars}
 Let us start with a discussion of change of scalars for homomorphisms
of abstract associative rings (without any topology) before passing
to contramodules and discrete modules over topological rings.

 Let $g\:R\rarrow S$ be a homomorphism of associative rings.
 Then any $S$\+module can be endowed with an $R$\+module structure
via~$g$, so we have the functor of restriction of scalars
$g_*\:S\modl\rarrow R\modl$.
 The functor~$g_*$ is exact and faithful, and it preserves both
the infinite coproducts and products.

 The functor $g_*\:S\modl\rarrow R\modl$ has adjoints on both sides.
 The functor of extension of scalars $g^*\:R\modl\rarrow S\modl$
given by the rule $g^*(M)=S\ot_RM$ is left adjoint to~$g_*$, while
the functor of coextension of scalars $g^!\:R\modl\rarrow S\modl$
given by the formula $g^!(M)=\Hom_R(S,M)$ is right adjoint to~$g_*$.

 When one passes to contramodules and discrete modules over topological
rings, this picture of three functors splits in two halves.
 Let $f\:\R\rarrow\S$ be a continuous homomorphism of
complete, separated topological rings, each of them having a base of
neighborhoods of zero formed by open right ideals.
 Then for any set $X$ there is the induced map of sets/abelian groups
$f[[X]]\:\R[[X]]\rarrow\S[[X]]$.

 Let $\C$ be a left $\S$\+contramodule.
 Composing the map $f[[\C]]\:\R[[\C]]\rarrow\S[[\C]]$ with
the contraaction map $\pi_\C\:\S[[\C]]\rarrow\C$, we obtain a map
$\R[[\C]]\rarrow\C$ defining a left $\R$\+contramodule structure
on the set~$\C$.
 We have constructed an exact, faithful functor of
\emph{contrarestriction of scalars} $f_\sharp\:\S\contra\rarrow
\R\contra$ forming a commutative square diagram with the forgetful
functors $\R\contra\rarrow\R\modl$, \ $\S\contra\rarrow\S\modl$ and
the restriction-of-scalars functor $\S\modl\rarrow\R\modl$.
 The functor~$f_\sharp$ also preserves infinite products.

 The functor~$f_\sharp$ has a left adjoint functor of
\emph{contraextension of scalars} $f^\sharp\:\R\contra\allowbreak
\rarrow\S\contra$.
 To construct the functor~$f^\sharp$, one can first define it on free
left $\R$\+con\-tramodules by the rule $f^\sharp(\R[[X]])=\S[[X]]$ for
all sets~$X$, and then extend to a right exact functor on the whole
category $\R\contra$.
 As any left adjoint functor, the functor~$f^\sharp$ preserves
coproducts.

 Notice that the forgetful functor $\S\contra\rarrow\S\modl$ does
\emph{not} preserve coproducts, generally speaking; and accordingly
the functor of contrarestriction of scalars $f_\sharp\:\S\contra
\rarrow\R\contra$ does not preserve coproducts.
 (For a counterexample, take $\R=\S$ with the given topology on
$\S$ and the discrete topology on $\R$, and let $f\:\R\rarrow\S$
be the identity map; or alternatively, take $\R=\boZ$ with
the discrete topology and let $f\:\R\rarrow\S$ be the unique ring
homomorphism.)
 Hence the functor~$f_\sharp$ does \emph{not} have a right adjoint,
in general.

 Similarly, the map~$f$ endows any discrete right $\S$\+module with
a discrete right $\R$\+module structure.
 In other words, the conventional functor of restriction of scalars
$\modr\S\rarrow\modr\R$ takes discrete right $\S$\+modules to
discrete right $\R$\+modules.
 So we have an exact, faithful functor of restriction of scalars
$f_\diamond\:\discr\S\rarrow\discr\R$.
 The functor~$f_\diamond$ also preserves infinite coproducts.

 As any colimit-preserving functor between Grothendieck abelian
categories, the functor~$f_\diamond$ has a right adjoint functor of
\emph{coextension of scalars} $f^\diamond\:\discr\R\rarrow\discr\S$.
 The functor~$f^\diamond$ is left exact and preserves products.

 Notice that the inclusion functor $\discr\S\rarrow\modr\S$ does
\emph{not} preserve products, generally speaking.
 In fact, the product of a family of objects $\M_\alpha$ in
$\discr\S$ can be constructed as the maximal discrete $\S$\+submodule
of the product of the $\S$\+modules $\M_\alpha$ taken in $\modr\S$.
 Accordingly, the functor of restriction of scalars
$f_\diamond\:\discr\S\rarrow\discr\R$ does not preserve products,
in general.
 (Once again, for a counterexample it suffices to take $\R=\S$ with
the given topology on $\S$ and the discrete topology on~$\R$.)
 Hence the functor~$f_\diamond$ usually does \emph{not} have
a left adjoint.

 For any discrete right $\S$\+module $\M$ and any left
$\R$\+contramodule $\C$ there is a natural isomorphism of abelian groups
$$
 f_\diamond(\M)\ocn_\R\C\cong\M\ocn_\S f^\sharp(\C).
$$

\subsection{Reductions modulo ideals}
\label{prelim-reductions}
 Let $\R$ be a complete, separated topological ring with a base of
neighborhoods of zero formed by open right ideals.

 Let $\N$ be a discrete right $\R$\+module, and $A\subset\R$ be
an additive subgroup in~$\R$.
 Then we denote by $\N_A\subset\N$ the additive subgroup in $\N$
consisting of all the elements $b\in\N$ such that $br=0$
for all $r\in A$.
 If $J$ is a left ideal in $\R$, then the subgroup $\N_J$
is an $\R$\+submodule in~$\N$.

 Let $R$ be an associative ring, $C$ be a left $R$\+module, and
$A\subset R$ be an additive subgroup in~$R$.
 As usually, we will denote by $AC\subset C$ the subgroup in $C$
spanned by all the elements $ac$ with $a\in A$ and $c\in C$.
 Clearly, one has $AC=A(RC)=(AR)C$, where $AR\subset R$ is
the right ideal generated by~$A$.
 When $J\subset R$ is a left ideal, the subgroup $JC$ is
an $R$\+submodule in~$C$.

 Let $\C$ be a left $\R$\+contramodule, and $\A\subset\R$ be
a closed additive subgroup in~$\R$.
 Then $\A$ is a complete, separated topological abelian group in
the topology induced from $\R$, and for any set $X$ the group $\A[[X]]$
is a subgroup in $\R[[X]]$.
 Following~\cite[Remark~A.3]{Psemi}, \cite[Section~1.3]{Pweak},
\cite[Section~D.1]{Pcosh}, \cite[Section~5]{PR}, we will denote by
$\A\tim\C\subset\C$ the image of the map $\A[[\C]]\rarrow\C$
obtained by restricting the contraaction map $\R[[\C]]\rarrow\C$
to the subgroup $\A[[\C]]\subset\R[[\C]]$.
 Clearly, one has $\A\C\subset\A\tim\C$.

 Let $\J\subset\R$ be a closed left ideal.
 Then the composition of the identity embedding $\R[[\J[[X]]]]\rarrow
\R[[\R[[X]]]]$ with the map $\phi_X\:\R[[\R[[X]]]]\rarrow\R[[X]]$
takes values inside the subset $\J[[X]]\subset\R[[X]]$, so $\J[[X]]$
is a left $\R$\+subcontramodule of $\R[[X]]$.
 It follows that, for any left $\R$\+contramodule $\C$, the subgroup
$\J\tim\C\subset\C$ is the image of the composition of left
$\R$\+contramodule morphisms $\J[[\C]]\rarrow\R[[\C]]\rarrow\C$.
 Hence $\J\tim\C$ is an $\R$\+subcontramodule in~$\C$.

 For any closed right ideal $\J\subset\R$ and any set $X$, one has
$$
 \J\tim(\R[[X]])=\J[[X]]\subset\R[[X]].
$$

 Let $\I\subset\R$ be an open right ideal.
 Then the right $\R$\+module $\R/\I$ is discrete, and for any
left $\R$\+contramodule $\C$ there is a natural isomorphism of
abelian groups
\begin{equation} \label{reduction-as-contratensor}
 (\R/\I)\ocn_\R\C\cong\C/(\I\tim\C).
\end{equation}
 The isomorphism~\eqref{reduction-as-contratensor} is a part of
the commutative square diagram
\begin{equation} \label{reduction-contratensor-diagram}
\begin{gathered}
 \xymatrix{
 (\R/\I)\ot_\R\C \ar@{=}[r] \ar@{->>}[d] & \C/\I\C \ar@{->>}[d] \\
 (\R/\I)\ocn_\R\C \ar@{=}[r] & \C/(\I\tim\C)
 }
\end{gathered}
\end{equation}
with the familiar natural isomorphism in the upper horizontal line and
the natural surjective maps of abelian groups shown by the vertical
arrows.
 To convince oneself of the existence of
the isomorphism~\eqref{reduction-as-contratensor}, one observes that
the two kernels of the surjective maps in the vertical lines
of~\eqref{reduction-contratensor-diagram} are identified with each
other by the isomorphism in the upper horizontal line.
 Essentially, this holds because one has
$\R[[\C]]=\R[\C]+\J[[\C]]$ for any open right ideal $\J\subset\R$,
and specifically, for the open right ideal $\J$ provided by
the condition~(ii) from Section~\ref{prelim-topol-rings}.

 In particular, for any set $X$ one has $\R[[X]]/(\I\tim\R[[X]])
\cong(\R/\I)[X]$ and therefore $\R[[X]]\cong\varprojlim_\I\R[[X]]/
(\I\tim\R[[X]])$, where the projective limit is taken over all
the open right ideals $\I\subset\R$.
 It follows that the natural map
$$
 \P\lrarrow\varprojlim\nolimits_\I\P/(\I\tim\P)
$$
is an isomorphism for every projective left $\R$\+contramodule~$\P$.

\subsection{Strongly closed subgroups}
\label{prelim-strongly-closed-subgroups}
 Let $\A$ be a complete, separated topological abelian group (with
a base of neighborhoods of zero formed by open subgroups).
 Let $\fH\subset\A$ be a closed subgroup.
 Then the quotient group $Q=\A/\fH$ is separated by
Lemma~\ref{quotient-group-topology-lem}(a), but it does not have
to be complete.
 Let $\Q=Q\sphat\,$ be the completion of the topological group~$Q$.
 Then the natural morphism $p\:\A\rarrow\Q$ is the cokernel of
the morphism $\fH\rarrow\A$ in the category of complete, separated 
topological abelian groups.
 The group $Q$~is a dense subgroup in $\Q$, and $\fH$ is the kernel
of~$p$; but $p$~does not need to be surjective.

 A counterexample, going back to the book~\cite[Proposition~11.1]{RD},
can be also found in the book~\cite[Theorem~4.1.48]{AGM}.
 It is explained in these books how to construct, for any separated
topological abelian group $Q$, a complete, separated topological group
structure on the direct sum $\A=\bigoplus_{i=1}^\infty Q$ of
a countable set of copies of $Q$ in such a way that the original
topology on $Q$ is the quotient topology of the topology on $\A$ with
respect to the summation map $\A\rarrow Q$.
 So $\A$ is complete while $Q$ may be not.
 (See also~\cite[Problem~20D]{KN}.
 In a different context of topological vector spaces over the field of
real numbers, in its real topology, a counterexample to completeness
of quotients was suggested in~\cite[Exercise~IV.4.10(b)]{Bour};
see also~\cite[Proposition~11.2]{Pal}.)
 We refer to~\cite[Section~2]{Pextop} for a detailed discussion of
this construction of counterexamples.

 Furthermore, given a set $X$, we have the induced map of sets/abelian
groups $p[[X]]\:\A[[X]]\rarrow\Q[[X]]$.
 The subgroup $\fH[[X]]\subset\A[[X]]$ is the kernel of~$p[[X]]$.
 But even if the map~$p$ is surjective (i.~e., the topological group $Q$
is complete), the map $p[[X]]$ does not have to be surjective.
 Essentially, the problem consists in the following: given
an $X$\+indexed family of elements in the group $\Q$ converging
to zero in the topology of $\Q$, how to lift it to an $X$\+indexed
family of elements in the group $\A$ converging to zero in
the topology of~$\A$?

 Once again, the same construction from the books~\cite{RD,AGM}
provides a counterexample.
 One can check that, for any separated topological abelian group $Q$,
the topology on the group $\A=\bigoplus_{i=1}^\infty Q$ constructed
in~~\cite[Proposition~11.1]{RD} and~\cite[Theorem~4.1.48]{AGM}, has
the property that \emph{no infinite family of nonzero elements
in\/ $\A$ converges to zero in\/~$\A$}.
 This is essentially mentioned in the formulation
of~\cite[Proposition~11.1]{RD}.
 So, generally speaking, the above-described lifting problem may be
unsolvable.
 See~\cite[Example~11.2]{Pextop} for the details.

 We will say that a closed subgroup $\fH$ in a complete, separated
topological abelian group $\A$ is \emph{strongly closed} if
the quotient group $\A/\fH$ is complete and, for every set $X$,
the induced map $\A[[X]]\rarrow(\A/\fH)[[X]]$ is surjective.
 Clearly, any open subgroup in a complete, separated topological
abelian group is strongly closed.

\begin{lem} \label{countable-base-strongly-closed}
 Let\/ $\A$ be a complete, separated topological abelian group with
a countable base of neighborhoods of zero consisting of open subgroups.
 Then any closed subgroup in\/ $\A$ is strongly closed.
\end{lem}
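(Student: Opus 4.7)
The plan is to reduce the statement to its two defining conditions: completeness of the quotient $\A/\fH$ and surjectivity of the induced map $p[[X]]\:\A[[X]]\rarrow(\A/\fH)[[X]]$ for every set $X$, where $p\:\A\rarrow\A/\fH$ is the quotient map. The completeness is immediate from Lemma~\ref{quotient-group-topology-lem}(b), so all the content sits in the surjectivity assertion.

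To produce lifts, I would fix a nested countable base $\U_1\supset\U_2\supset\U_3\supset\cdots$ of open subgroups of $\A$, and take $\bar\U_n=(\fH+\U_n)/\fH=p(\U_n)$ as the induced nested base in $\A/\fH$. Since $\A/\fH$ is separated by Lemma~\ref{quotient-group-topology-lem}(a), one has $\bigcap_n\bar\U_n=0$. Given $(\bar a_x)_{x\in X}\in(\A/\fH)[[X]]$, I would first choose an increasing chain of finite subsets $F_1\subset F_2\subset\cdots$ in $X$ with $\bar a_x\in\bar\U_n$ whenever $x\notin F_n$ (by replacing the witnessing sets with their cumulative unions); then, for each $x\in X$, set $F_0:=\emptyset$ and $n(x)=\sup\{n\geq 0:x\notin F_n\}\in\{0,1,2,\ldots,\infty\}$. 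If $n(x)=\infty$, then $\bar a_x\in\bigcap_n\bar\U_n=0$ and I set $a_x:=0$; if $1\leq n(x)<\infty$, the identity $\bar\U_{n(x)}=p(\U_{n(x)})$ permits me to pick a representative $a_x\in\U_{n(x)}$ with $p(a_x)=\bar a_x$; and for the finitely many $x\in F_1$ (where $n(x)=0$) I lift $\bar a_x$ to an arbitrary element of $\A$.

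To verify that $(a_x)_{x\in X}\in\A[[X]]$, I would fix $n\geq 1$ and observe that $\{x\in X:a_x\notin\U_n\}\subset F_1\cup\{x\notin F_1:n(x)<n\}\subset F_n$, which is finite; hence $(a_x)_{x\in X}$ converges to zero in $\A$ and maps to $(\bar a_x)_{x\in X}$, establishing surjectivity.

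The principal difficulty is the simultaneous choice of representatives in the preceding paragraph. Two features of the countable-base setting make it work: first, $\bar\U_n=p(\U_n)$ ensures that each $\bar a_x\in\bar\U_n$ actually admits a representative inside $\U_n$ (not merely inside the coset $\fH+\U_n$); second, the nested countable base allows one to encode the ``depth'' of each coefficient in a single integer $n(x)$ and then choose $a_x\in\U_{n(x)}$ accordingly, so one representative per index controls all tail conditions at once. For uncountable bases this diagonalization breaks down, which is precisely the phenomenon exhibited by the \cite{AGM} counterexample discussed above.
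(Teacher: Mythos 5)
Your proof is correct. The paper itself dismisses this lemma in one sentence as ``a straightforward extension of Lemma~\ref{quotient-group-topology-lem}(b)'': the intended argument identifies $\A[[X]]$ with $\varprojlim_n(\A/\U_n)[X]$ and $(\A/\fH)[[X]]$ with $\varprojlim_n\bigl(\A/(\fH+\U_n)\bigr)[X]$, and then invokes exactness of countable projective limits of surjective systems applied to the short exact sequences $0\rarrow(\fH/\fH\cap\U_n)[X]\rarrow(\A/\U_n)[X]\rarrow(\A/(\fH+\U_n))[X]\rarrow0$, exactly as in the proof of Lemma~\ref{quotient-group-topology-lem}(b). Your argument runs on the same fuel---countability of the base plus surjectivity of $\U_n\rarrow(\fH+\U_n)/\fH$---but unrolls the Mittag--Leffler-type limit argument into an explicit lifting: encoding the ``depth'' of each coefficient in the single integer $n(x)$ and choosing one representative $a_x\in\U_{n(x)}$ per index is precisely what the abstract exactness statement does behind the scenes. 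All the steps check out: the monotone exhaustion $F_1\subset F_2\subset\dotsb$, the use of separatedness of $\A/\fH$ (valid since $\fH$ is closed, by Lemma~\ref{quotient-group-topology-lem}(a)) to kill the coefficients with $n(x)=\infty$, and the tail estimate $\{x: a_x\notin\U_n\}\subset F_n$. What your version buys is transparency about exactly where countability enters and why the diagonalization breaks for uncountable bases (matching the \cite{AGM} counterexample); what the paper's version buys is brevity and uniformity with the earlier completeness proof. The only presentational point worth making explicit is that $(\A/\fH)[[X]]$ is defined relative to the quotient topology, whose base is $\{\bar\U_n\}$, so membership in $(\A/\fH)[[X]]$ is indeed the tail condition you use; you have already secured the needed separatedness and completeness of $\A/\fH$ before invoking this.
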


\begin{proof}
 This is a straightforward extension of
Lemma~\ref{quotient-group-topology-lem}(b).
 For a generalization (and some details),
see~\cite[Proposition~11.6]{Pextop}.
\end{proof}

 Notice that countability of a base of neighborhoods of zero in
the topological quotient group $Q=\A/\fH$ is \emph{not} sufficient for
the validity of Lemma~\ref{countable-base-strongly-closed},
as the very same counterexample shows.

\begin{lem} \label{strongly-closed-in-quotients}
 Let\/ $\fK\subset\fH\subset\A$ be two embedded closed subgroups
in a complete, separated topological abelian group\/~$\A$.
 In this situation, \par
\textup{(a)} if\/ $\fK$ is strongly closed in\/ $\A$, then\/ $\fK$ is
strongly closed in\/~$\fH$; \par
\textup{(b)} if\/ $\fK$ is strongly closed in\/ $\A$ and\/ $\fH/\fK$
is strongly closed in\/ $\A/\fK$, then $\fH$ is strongly
closed in\/~$\A$; \par
\textup{(c)} if\/ $\fH$ is strongly closed in\/ $\A$ and\/ $\A/\fK$ is
complete, then $\fH/\fK$ is strongly closed in\/~$\A/\fK$; \par
\textup{(d)} if\/ $\fH$ is strongly closed in\/ $\A$ and\/ $\fK$ is
strongly closed in\/ $\fH$, then\/ $\fK$ is strongly closed in\/~$\A$.
\end{lem}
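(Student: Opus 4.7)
The plan is to verify all four parts by unwinding the definition of \emph{strongly closed} and reducing to Lemmas~\ref{subgroup-topology-lem} and~\ref{quotient-group-topology-lem} where possible. Two general identifications will be used throughout: first, the induced topology on $\fH/\fK$ as a subgroup of $\A/\fK$ coincides with its quotient topology as a quotient of $\fH$ (because $\fH \cap (\A' + \fK) = (\fH \cap \A') + \fK$ for any subgroup $\A' \subset \A$, since $\fK \subset \fH$); and second, $\A/\fH \cong (\A/\fK)/(\fH/\fK)$ as topological abelian groups. The main obstacle will be establishing completeness of $\A/\fK$ in part~(d) without a countability hypothesis.

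For (a), I would note that $\fH/\fK$ is a closed subgroup of the complete separated group $\A/\fK$, hence complete by Lemma~\ref{subgroup-topology-lem}(b). For the surjectivity $\fH[[X]] \to (\fH/\fK)[[X]]$, I would lift a zero-convergent family in $\fH/\fK \subset \A/\fK$ to $\A$ using strong closedness of $\fK$ in $\A$, and observe that the lift automatically lies in $\fH$ since the preimage of $\fH/\fK$ under $\A \to \A/\fK$ equals $\fH$.

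Parts (b) and (c) both exploit $\A/\fH \cong (\A/\fK)/(\fH/\fK)$. For (b), completeness of $\A/\fH$ is read off directly, and the required surjection $\A[[X]] \to (\A/\fH)[[X]]$ factors as $\A[[X]] \to (\A/\fK)[[X]] \to (\A/\fH)[[X]]$, each arrow surjective by hypothesis. For (c), completeness of $\A/\fK$ combined with strong closedness of $\fH$ in $\A$ gives completeness of $(\A/\fK)/(\fH/\fK) = \A/\fH$, and surjectivity of $(\A/\fK)[[X]] \to (\A/\fH)[[X]]$ is obtained by lifting a zero-convergent family from $\A/\fH$ all the way up to $\A$ and then projecting to $\A/\fK$.

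Part (d) is the substantive one. Separatedness of $\A/\fK$ is immediate since $\fK$ is closed. For completeness, given a compatible system $(a_\U + \fK + \U)_\U \in \varprojlim_\U \A/(\fK + \U)$, I would first project into $\A/\fH$ (which is complete) to obtain some $y \in \A$ with $y - a_\U \in \fH + \U$ for every open subgroup $\U \subset \A$. After replacing $a_\U$ by $a_\U - y$, I can write $a_\U = h_\U + u_\U$ with $h_\U \in \fH$ and $u_\U \in \U$; the compatibility of $(a_\U)$ together with $\fK \subset \fH$ forces $h_\U - h_{\U'} \in \fK + (\fH \cap \U')$ whenever $\U \subset \U'$, so $(h_\U + \fK)_\U$ defines a coherent system in the completion of $\fH/\fK$ in its induced topology. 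Completeness of $\fH/\fK$ then produces $h \in \fH$ with $a_\U - h \in \fK + \U$, providing the needed limit after adding back $y$. For the surjectivity $\A[[X]] \to (\A/\fK)[[X]]$, I would project a zero-convergent family in $\A/\fK$ to $\A/\fH$, lift it to $\A$ via strong closedness of $\fH$, subtract, and then lift the remaining zero-convergent family (which lies in $\fH/\fK$ with its subspace topology) via strong closedness of $\fK$ in $\fH$; the sum of the two lifts gives the desired family in $\A$.
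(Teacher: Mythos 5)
Your proposal is correct and follows essentially the same route as the paper: part~(a) by lifting through the pullback square (your element\-/wise lift landing in $\fH$ is exactly the Cartesian\-/square argument), parts~(b) and~(c) via the identification $\A/\fH\cong(\A/\fK)/(\fH/\fK)$ and the factorization $\A[[X]]\rarrow(\A/\fK)[[X]]\rarrow(\A/\fH)[[X]]$, and part~(d) by the two diagram chases through the short exact sequences $0\rarrow\fH/\fK\rarrow\A/\fK\rarrow\A/\fH\rarrow0$ and $0\rarrow\fH[[X]]\rarrow\A[[X]]\rarrow(\A/\fH)[[X]]\rarrow0$, which the paper phrases as commutative diagrams with left\-/exact projective\-/limit rows. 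The only difference is presentational (explicit element chases, with the modular law $\fH\cap(\fK+\U)=\fK+(\fH\cap\U)$ doing the work that the paper leaves implicit), not mathematical.
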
 

\begin{proof}
 Part~(a): one observes that for any closed subgroups $\fK\subset\fH
\subset\A$, the subgroup $\fH/\fK$ is closed in $\A/\fK$.
 Hence $\fH/\fK$ is complete whenever $\A/\fK$ is (see
Lemma~\ref{subgroup-topology-lem}(b)).
 Furthermore, the square diagram of abelian groups
$\fH[[X]]\rarrow(\fH/\fK)[[X]] \rarrow(\A/\fK)[[X]]$, \
$\fH[[X]]\rarrow\A[[X]]\rarrow(\A/\fK)[[X]]$
is Cartesian for any set~$X$.
 Hence the morphism $\fH[[X]]\rarrow(\fH/\fK)[[X]]$ is surjective
whenever the morphism $\A[[X]]\rarrow(\A/\fK)[[X]]$ is.

 In parts~(b) and~(c) one uses the isomorphism of topological groups
$\A/\fH\cong(\A/\fK)/(\fH/\fK)$ and commutativity of the triangle
diagram $\A[[X]]\rarrow(\A/\fK)[[X]]\rarrow(\A/\fH)[[X]]$.

 Part~(d): to prove that the topological group $\A/\fK$ is complete,
consider the commutative diagram of abelian groups
$$
 \xymatrix{
  0 \ar[r] & \fH/\fK \ar[r] \ar[d]^\cong & \A/\fK \ar[r] \ar[d]
  & \A/\fH \ar[r] \ar[d]^\cong & 0 \\
  0 \ar[r] & \varprojlim_\U \fH/\fH\cap(\fK+\U) \ar[r]
  & \varprojlim_\U \A/(\fK+\U) \ar[r]
  & \varprojlim_\U \A/(\fH+\U)
 }
$$
 Here the projective limits in the lower row are taken over all
the open subgroups $\U$ in~$\A$.
 The upper row is a short exact sequence.
 The lower row is left exact as the projective limit of short exact
sequences $0\rarrow\fH/\fH\cap(\fK+\U)\rarrow\A/(\fK+\U)\rarrow
\A/(\fH+\U)\rarrow0$.
 The leftmost and rightmost vertical arrows are isomorphisms, since
the topological groups $\fH/\fK$ and $\A/\fH$ are complete by
assumption.
 It follows that the lower row is also a short exact sequence and
the middle vertical arrow is an isomorphism.

 To prove that the map $\A[[X]]\rarrow(\A/\fK)[[X]]$ is surjective
for any set $X$, consider the commutative diagram of abelian groups
$$
 \xymatrix{
  0 \ar[r] & \fH[[X]] \ar[r] \ar@{->>}[d] & \A[[X]] \ar[r] \ar[d]
  & (\A/\fH)[[X]] \ar[r] \ar@{=}[d] & 0 \\
  0 \ar[r] & (\fH/\fK)[[X]] \ar[r] & (\A/\fK)[[X]] \ar[r]
  & (\A/\fH)[[X]]
 }
$$
 The upper row is obviously left exact; and in fact it is exact since
the map $\A[[X]]\rarrow(\A/\fH)[[X]]$ is surjective by assumption.
 The lower row is also obviously left exact.
 The leftmost vertical arrow is surjective by assumption.
 It follows that the lower row is also a short exact sequence and
the middle vertical arrow is surjective.

 Alternatively, all the assertions of lemma follow from the existence
of the \emph{strong exact structure} on the additive category of
complete, separated topological abelian groups;
see~\cite[Theorem~11.5]{Pextop}.
\end{proof}

\begin{ex} \label{pro-finite-dimensional}
 A topological vector space over a field~$k$ is said to be
\emph{pro-finite-dimensional} (or \emph{pseudo-compact},
or~\emph{linearly compact}) if it is isomorphic to the projective limit
of a directed diagram of discrete finite-dimensional $k$\+vector spaces,
endowed with the projective limit topology.
 All pro-finite-dimensional topological vector spaces are complete
and separated; a complete, separated topological vector space $W$ is
pro-finite-dimensional if and only if open vector subspaces of finite
codimension form a base of neighborhoods of zero in~$W$.
 Any closed vector subspace of a pro-finite-dimensional vector space
is pro-finite-dimensional in the induced topology, and the related
quotient space is pro-finite-dimensional in the quotient topology.

 The category of pro-finite-dimensional topological vector spaces
(and continuous linear maps between them) is abelian; in fact,
it is anti-equivalent to the category of discrete $k$\+vector spaces.
 The anti-equivalence assigns to a discrete vector space $V$
the pro-finite-dimensional vector space $V^*=\Hom_k(V,k)$ with
the topology where the annihilators of finite-dimensional vector
subspaces of $V$ form a base of neighborhoods of zero.
 Conversely, to a pro-finite-dimensional vector space $W$,
the discrete vector space of all continuous linear maps $W\rarrow k$
is assigned.

 It follows that any short exact sequence of pro-finite-dimensional
vector spaces splits.
 In other words, any closed vector subspace in a pro-finite-dimensional
vector space is a direct summand (in the category of topological
vector spaces).
 Consequently, all closed vector subspaces in a pro-finite-dimensional
topological vector space are strongly closed.
 Moreover, in any topological vector space with linear topology
(i.~e., with a base of neighborhoods of zero consisting of vector 
subspaces), any vector subspace which is pro-finite-dimensional in
the induced topology is a direct summand in the category of topological
vector spaces, and consequently, is strongly closed (see~\cite[end
of Section~3]{Pextop}).
\end{ex}

\subsection{Strongly closed two-sided ideals}
\label{prelim-strongly-closed-ideals}
 Let $\R$ be a complete, separated topological ring with a base of
neighborhoods of zero formed by open right ideals.
 Let $\fH\subset\R$ be a closed two-sided ideal.
 Then the quotient ring $S=\R/\fH$ in its quotient topology is
a separated topological ring with a base of neighborhoods of zero
formed by open right ideals.
 Hence the completion $\S=S\sphat\,$ is a complete, separated
topological ring with a base of neighborhoods of zero formed by
open right ideals.

 The natural morphism $p\:\R\rarrow\S$ is a continuous homomorphism
of complete, separated topological rings with the kernel $\fH$, and
the universal one with this property; but it needs not be surjective.
 If $\R$ has a base of neighborhoods of zero consisting of open
two-sided ideals, then so do $S$ and~$\S$.

 The abelian category of discrete right $S$\+modules or, equivalently,
discrete right $\S$\+modules is a full subcategory of the abelian
category of discrete right $\R$\+modules.
 In other words, the exact functor $p_\diamond\:\discr\S\rarrow\discr\R$
is fully faithful.
 The full subcategory $\discr\S\subset\discr\R$ is closed under
arbitrary subobjects, quotient objects, and coproducts.

 For any discrete right $\R$\+module $\N$, the discrete right
$\R$\+module structure on the submodule $\N_\fH\subset\N$ comes from
a discrete right $\S$\+module structure.
 In other words, the discrete right $\R$\+module $\N_\fH$
belongs to the essential image of the functor $p_\diamond\:\discr\S
\rarrow\discr\R$.
 This is the maximal $\R$\+submodule in $\N$ with this property.
 The functor $\N\longmapsto\N_\fH$ is the coextension-of-scalars
functor with respect to the morphism $p\:\R\rarrow\S$, that is
$$
 p^\diamond(\N)\cong\N_\fH\qquad\text{for all \,$\N\in\discr\R$}.
$$

 Now let us assume that $\fH\subset\R$ is a strongly closed two-sided
ideal.
 Then surjectivity of the maps $p[[X]]\:\R[[X]]\rarrow\S[[X]]$ for
all sets $X$ implies that the exact functor of contrarestriction
of scalars $p_\sharp\:\S\contra\rarrow\R\contra$ is fully faithful.
 So the abelian category $\S\contra$ is a full subcategory in
the abelian category $\R\contra$.
 One easily observes that $\S\contra$ is closed under arbitrary
subobjects, quotient objects, and products in $\R\contra$.

 For any left $\R$\+contramodule $\C$, the left $\R$\+contramodule
structure of the quotient contramodule $\C/(\fH\tim\C)$ comes from
a left $\S$\+contramodule structure.
 In other words, the left $\R$\+contramodule $\C/(\fH\tim\C)$ belongs
to the essential image of the functor $p_\sharp\:\S\contra
\rarrow\R\contra$.
 This is the maximal quotient $\R$\+contramodule of $\C$ with
this property.
 The functor $\C\longmapsto\C/(\fH\tim\C)$ is
the contraextension-of-scalars functor with respect to the morphism
$p\:\R\rarrow\S$, that is
$$
 p^\sharp(\C)\cong\C/(\fH\tim\C)\qquad\text{for all \,$\C\in\R\contra$}.
$$

 When open two-sided ideals form a base of neighborhoods of zero
in~$\R$, one can compute the contratensor product $\N\ocn_\R\C$ as
$$
 \N\ocn_\R\C\cong\varinjlim\nolimits_\I \N_\I\ocn_\R\C\cong
 \varinjlim\nolimits_\I \N_\I\ocn_{\R/\I} (\C/\I\tim\C)
$$
for any discrete right $\R$\+module $\N$ and left
$\R$\+contramodule $\C$, where the inductive limits are taken over all
the open two-sided ideals $\I\subset\R$ (cf.~\cite[Section~D.2]{Pcosh}).

\subsection{Example} \label{prelim-endomorphism-ring}
 Let $A$ be an associative ring and $M$ be a left $A$\+module.
 Consider the associative ring $\R=\Hom_A(M,M)^\rop$ opposite to
the ring of endomorphisms of the $A$\+module~$M$.
 Then the ring $A$ acts in $M$ on the left and the ring $\R$ acts in $M$
on the right; so $M$ is an $A$\+$\R$\+bimodule.
 The following topology on the ring $\R$ is known as the \emph{finite
topology} in the literature~\cite[Section~IX.6]{Jac},
\cite[Section~107]{Fuc} (it is implicit in Jacobson's density
theorem~\cite{Jac0}, \cite[Section~IX.13]{Jac}).

 For every finitely generated $A$\+submodule $E\subset M$, consider
the subgroup $\Ann(E)=\Hom_A(M/E,M)\subset\Hom_A(M,M)$ consisting of
all the endomorphisms of the $A$\+module $M$ which annihilate
the submodule~$E$.
 Then $\Ann(E)$ is a left ideal in the ring $\Hom_A(M,M)$ and a right
ideal in the ring~$\R$.
 Let $\B$ denote the set of all right ideals in $\R$ of the form
$\Ann(E)$, where $E$ ranges over all the finitely generated submodules
in~$M$.
 Then $\B$ is a base of a complete, separated topology compatible with
the associative ring structure on~$\R$ \cite[Theorem~107.1]{Fuc}, 
\cite[Theorem~7.1]{PS}.
 The right action of $\R$ in $M$ makes $M$ a discrete right
$\R$\+module \cite[Proposition~7.3]{PS}.

 This example plays a key role in the categorical tilting
theory~\cite{PS,PS2}, and it is also our intended example of
a topological ring in the companion paper~\cite{BP3}.
 Besides the categories of left modules over an associative rings,
there are also other/wider classes of additive categories $\sA$ such
that for any object $M\in\sA$ there is a natural structure of
a complete, separated topological ring with a base of neighborhoods of
zero formed by open right ideals on the ring $\R=\Hom_\sA(M,M)^\rop$.
 A detailed discussion of these can be found
in~\cite[Sections~9\+-10]{PS} and~\cite[Section~3]{PS3}.

\Section{Flat Contramodules} \label{flat-contramodules-secn}

 The interactions of flatness with adic completion were studied by
Yekutieli for ideals in Noetherian commutative rings~\cite{Yek} and
in the greater generality of weakly proregular finitely generated ideals
in commutative rings~\cite{Yek2}.
 In the work of the present author, the theory of flat contramodules
was developed for ideals in Noetherian commutative
rings~\cite[Sections~B.8--B.9]{Pweak},
for centrally generated ideals in noncommutative Noetherian
rings~\cite[Section~C.5]{Pcosh}, for topological associative rings
with a countable base of neighborhoods of zero formed by open
two-sided ideals~\cite[Section~D.1]{Pcosh}, and for topological
rings with a countable base of neighborhoods of zero formed by
open right ideals~\cite[Sections~5\+-7]{PR}.

 In this section, we obtain some very partial results for topological
rings with an uncountable base of neighborhoods of zero.

 Let $\R$ be a complete, separated topological ring with a base of
neighborhoods of zero formed by open right ideals.
 A left $\R$\+contramodule $\F$ is called
\emph{flat}~\cite[Section~5]{PR} if the functor of contratensor product
with~$\F$
$$
 {-}\ocn_\R\F\:\discr\R\lrarrow\boZ\modl
$$
is exact as a functor from the abelian category of discrete right
$\R$\+modules to the category of abelian groups.
 The class of flat left $\R$\+contramodules is closed under coproducts
and direct limits in the category $\R\contra$ \cite[Lemma~5.6]{PR}.
 All projective left $\R$\+contramodules are flat.

 If $\I\subset\R$ is an open two-sided ideal, then the left
$\R/\I$\+module $\F/\I\tim\F$ is flat for any flat left
$\R$\+contramodule~$\F$.
 Indeed, the functor ${-}\ot_{\R/\I}(\F/\I\tim\F)\:\modr\R/\I\rarrow
\boZ\modl$ is exact, because there is a natural isomorphism
$$
 N\ot_{\R/\I}(\F/\I\tim\F)\cong N\ocn_\R\F \qquad
 \text{for all right $\R/\I$\+modules~$N$}.
$$
 If open two-sided ideals form a base of neighborhoods of zero in $\R$,
then the converse assertion also holds: a left $\R$\+contramodule $\F$
is flat if and only if the left $\R/\I$\+module $\F/\I\tim\F$ is flat
for every open two-sided ideal $\I\subset\R$.
 (Cf.\ Sections~\ref{prelim-reductions}
and~\ref{prelim-strongly-closed-ideals}.)

 The left derived functor
$$
 \Ctrtor^\R_i\:\discr\R\times\R\contra\lrarrow\boZ\modl
$$
is constructed using projective resolutions of the second (contramodule)
argument.
 So, if $\dotsb\rarrow\P_2\rarrow\P_1\rarrow\P_0\rarrow\C\rarrow0$ is
an exact complex in the abelian category $\R\contra$ and $\P_i$ are
projective left $\R$\+contramodules for all $i\ge0$, then
$$
 \Ctrtor^\R_i(\N,\C)=H_i(\N\ocn_\R\P_\bu)
 \qquad\text{for all \,$\N\in\discr\R$ and $i\ge0$}.
$$
 As always with derived functors of one argument, for any short exact
sequence of left $\R$\+contramodules $0\rarrow\A\rarrow\B\rarrow\C
\rarrow0$ and any discrete right $\R$\+module $\N$ there is a natural
long exact sequence of abelian groups
\begin{multline} \label{ctrtor-contramod-arg-long-sequence}
 \dotsb\lrarrow\Ctrtor_{i+1}^\R(\N,\C)\lrarrow\Ctrtor_i^\R(\N,\A) \\
 \lrarrow\Ctrtor_i^\R(\N,\B)\lrarrow\Ctrtor_i^\R(\N,\C)\lrarrow\dotsb
\end{multline}
 Since the functor of contratensor product $\N\ocn_\R{-}\:\R\contra
\rarrow\boZ\modl$ is right exact on the abelian category $\R\contra$
for every $\N\in\discr\R$, one has
$$
 \Ctrtor^\R_0(\N,\C)=\N\ocn_\R\C.
$$
 Furthermore, for any short exact sequence of discrete right
$\R$\+modules $0\rarrow\cL\rarrow\M\rarrow\N\rarrow0$ and any complex
of projective left $\R$\+contramodules $\P_\bu$, the short sequence
of complexes of abelian groups $0\rarrow\cL\ocn_\R\P_\bu\rarrow
\M\ocn_\R\P_\bu\rarrow\N\ocn_\R\P_\bu\rarrow0$ is exact (because
projective left $\R$\+contramodules are flat).
 Therefore, for any left $\R$\+contramodule $\C$ there is a long
exact sequence of abelian groups
\begin{multline} \label{ctrtor-discrete-arg-long-sequence}
 \dotsb\lrarrow\Ctrtor^\R_{i+1}(\N,\C)\lrarrow\Ctrtor^\R_i(\cL,\C) \\
 \rarrow\Ctrtor^\R_i(\M,\C)\lrarrow\Ctrtor^\R_i(\N,\C)\lrarrow\dotsb
\end{multline}

 We will say that a left $\R$\+contramodule $\F$ is
\emph{$1$\+strictly flat} if $\Ctrtor^\R_1(\N,\F)=0$ for all discrete
right $\R$\+modules~$\N$.
 More generally, a left $\R$\+contramodule $\F$ is
\emph{$n$\+strictly flat} if $\Ctrtor^\R_i(\N,\F)=0$ for all discrete
right $\R$\+modules $\N$ and all $1\le i\le n$.
 A left $\R$\+contramodule $\F$ is \emph{$\infty$\+strictly flat} if
it is $n$\+strictly flat for all $n>0$.

 Clearly, all projective left $\R$\+contramodules are
$\infty$\+strictly flat.
 It follows from the exact
sequence~\eqref{ctrtor-contramod-arg-long-sequence} that the class of
all $n$\+strictly flat left $\R$\+contramodules is closed under
extensions in $\R\contra$ for every $n\ge1$, and that the class
of all $\infty$\+strictly flat left $\R$\+contramodules is closed under
(extensions and) the passage to the kernels of surjective morphisms.
 Given some $n\ge1$, the class of all $n$\+strictly flat left
$\R$\+contramodules is closed under the kernels of surjective morphisms
if and only if it coincides with the class of all $\infty$\+strictly
flat left $\R$\+contramodules.

 From the exact sequence~\eqref{ctrtor-discrete-arg-long-sequence} one
can conclude that every $1$\+strictly flat left $\R$\+contramodule
is flat.
 According to~\cite[proof of Lemma~6.10, Remark~6.11 and
Corollary~6.15]{PR}, when the topological ring $\R$ has a countable base
of neighborhoods of zero (formed by open right ideals), the classes of
flat, $1$\+strictly flat, and $\infty$\+strictly flat left
$\R$\+contramodules coincide.

 Let us say that a short exact sequence of left $\R$\+contramodules
$0\rarrow\A\rarrow\B\rarrow\C\rarrow0$ is \emph{contratensor pure} if
the induced sequence $0\rarrow\N\ocn_\R\A\rarrow\N\ocn_\R\B\rarrow
\N\ocn_\R\C\rarrow0$ is exact (i.~e., the map $\N\ocn_\R\A\rarrow
\N\ocn_\R\B$ is injective) for every discrete right $\R$\+module~$\N$.
 If the left $\R$\+contramodule $\B$ is $1$\+strictly flat, then
the sequence $0\rarrow\A\rarrow\B\rarrow\C\rarrow0$ is contratensor pure
if and only if the left $\R$\+contramodule $\C$ is $1$\+strictly flat.

\begin{lem} \label{1-strictly-flat-class-coproduct-closed}
 The class of all\/ $1$\+strictly flat left\/ $\R$\+contramodules is
closed under infinite coproducts in\/ $\R\contra$.
\end{lem}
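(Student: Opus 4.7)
The plan is to reduce the claim to injectivity of a single map of abelian groups, making essential use of the facts that coproducts of free left $\R$\+contramodules are free, that coproducts are right exact in $\R\contra$, and that the contratensor product preserves coproducts in its contramodule argument. Let $(\F_\alpha)_{\alpha\in A}$ be a family of $1$\+strictly flat left $\R$\+contramodules. First I would choose, for each $\alpha$, a free contramodule $\P_\alpha=\R[[X_\alpha]]$ together with a surjection $\P_\alpha\twoheadrightarrow\F_\alpha$, and denote its kernel by $K_\alpha$. Setting $\F=\coprod_\alpha\F_\alpha$ and $\P=\coprod_\alpha\P_\alpha$, the identification $\coprod_\alpha\R[[X_\alpha]]\cong\R[[\coprod_\alpha X_\alpha]]$ shows that $\P$ is again free, and right exactness of the coproduct makes the induced map $\P\twoheadrightarrow\F$ surjective. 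Let $K\subset\P$ denote its kernel; right exactness of the coproduct applied to the sequences $0\rarrow K_\alpha\rarrow\P_\alpha\rarrow\F_\alpha\rarrow0$ yields a surjection $q\:\coprod_\alpha K_\alpha\twoheadrightarrow K$.

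Next I would fix a discrete right $\R$\+module $\N$. The long exact sequence~\eqref{ctrtor-contramod-arg-long-sequence} attached to $0\rarrow K\rarrow\P\rarrow\F\rarrow0$, together with the vanishing $\Ctrtor^\R_i(\N,\P)=0$ for $i>0$ (a consequence of projectivity of $\P$), identifies $\Ctrtor^\R_1(\N,\F)$ with the kernel of the map $j\:\N\ocn_\R K\rarrow\N\ocn_\R\P$ induced by the inclusion $K\hookrightarrow\P$. So the task reduces to showing that $j$ is injective. By the $1$\+strict flatness of each $\F_\alpha$, the map $\N\ocn_\R K_\alpha\rarrow\N\ocn_\R\P_\alpha$ is injective; since direct sums of injective maps of abelian groups are injective, and since $\N\ocn_\R{-}$ commutes with coproducts, the composite map $\N\ocn_\R\coprod_\alpha K_\alpha\rarrow\N\ocn_\R\P$ induced by $\coprod_\alpha K_\alpha\rarrow\P$ is injective. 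This composite factors as
\[
 \N\ocn_\R\coprod\nolimits_\alpha K_\alpha\xrightarrow{\N\ocn_\R q}\N\ocn_\R K\xrightarrow{j}\N\ocn_\R\P,
\]
and the first arrow is surjective by right exactness of $\ocn_\R$. A morphism whose precomposition with a surjection is injective must itself be injective, so $j$ is injective and $\Ctrtor^\R_1(\N,\F)=0$.

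The main subtlety is that coproducts are not in general exact in $\R\contra$, so the morphism $\coprod_\alpha K_\alpha\rarrow K$ may have nonzero kernel; this is what prevents one from simply identifying $\Ctrtor^\R_1(\N,\F)$ with a coproduct of $\Ctrtor^\R_1(\N,\F_\alpha)$'s. The argument above sidesteps this by using only surjectivity of $q$ and transferring injectivity from the outer composite down to $j$, which is the cleanest way to overcome the bookkeeping obstacle.
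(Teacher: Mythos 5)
Your proof is correct and follows essentially the same route as the paper's: both arguments rest on the surjection $\coprod_\alpha K_\alpha\twoheadrightarrow K$ coming from right exactness of coproducts, the injectivity of $\N\ocn_\R\coprod_\alpha K_\alpha\rarrow\N\ocn_\R\P$ obtained from coproduct preservation of the contratensor product, and the observation that precomposing with a surjection transfers injectivity down to $\N\ocn_\R K\rarrow\N\ocn_\R\P$. The only cosmetic difference is that the paper packages the conclusion via the notion of a contratensor pure short exact sequence with projective middle term, whereas you read off $\Ctrtor^\R_1(\N,\F)=\ker(j)$ directly from the long exact sequence.
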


\begin{proof}
 Let $(\F_\alpha)_\alpha$ be a family of $1$\+strictly flat
left $\R$\+contramodules.
 Choose short exact sequences of left $\R$\+contramodules
$0\rarrow\fK_\alpha\rarrow\P_\alpha\rarrow\F_\alpha\rarrow0$, where
$\P_\alpha$ are projective left $\R$\+contramodules.
 Then the sequence $\coprod_\alpha\fK_\alpha\rarrow\coprod_\alpha
\P_\alpha\rarrow\coprod_\alpha\F_\alpha\rarrow0$ (where the coproducts
are taken in $\R\contra$) is exact, as the functors of coproduct
are right exact in any abelian category.
 Therefore, there is a natural surjective $\R$\+contramodule morphism
from $\coprod_\alpha\fK_\alpha$ onto the kernel $\fK$ of the morphism
$\coprod_\alpha\P_\alpha\rarrow\coprod_\alpha\F_\alpha$.
 The contratensor product functor $\ocn_\R$ preserves colimits, hence
for any discrete right $\R$\+module $\N$ the morphism $\N\ocn_\R
\coprod_\alpha\fK_\alpha\rarrow\N\ocn_\R\coprod_\alpha\P_\alpha$
is injective (being isomorphic to the morphism $\coprod_\alpha
\N\ocn_\R\fK_\alpha\rarrow\coprod_\alpha\N\ocn_\R\P_\alpha$, where
the coproducts are taken in the category of abelian groups).
 At the same time, the morphism $\N\ocn_\R\coprod_\alpha\fK_\alpha
\rarrow\N\ocn_\R\fK$ is surjective.
 It follows that the morphism $\N\ocn_\R\coprod_\alpha\fK_\alpha\rarrow
\N\ocn_\R\fK$ is an isomorphism and the morphism $\N\ocn_\R\fK\rarrow
\N\ocn_\R\coprod_\alpha\P_\alpha$ is injective, that is, the short exact
sequence $0\rarrow\fK\rarrow\coprod_\alpha\P_\alpha\rarrow\coprod_\alpha
\F_\alpha\rarrow0$ is contratensor pure.
 Since the left $\R$\+contramodule $\coprod_\alpha\P_\alpha$ is
projective, it follows that the left $\R$\+contramodule
$\coprod_\alpha\F_\alpha$ is $1$\+strictly flat.
\end{proof}

\begin{lem} \label{1-strictly-flat-class-countable-colimit-closed}
 The class of all\/ $1$\+strictly flat left\/ $\R$\+contramodules is
closed under \emph{countable} direct limits in\/ $\R\contra$.
\end{lem}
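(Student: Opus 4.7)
My plan is to combine Lemma~\ref{1-strictly-flat-class-coproduct-closed} with the telescope short exact sequence for a countable direct limit. First I would reduce to systems indexed by\/ $\mathbb{N}$: every countable directed poset admits a cofinal chain (constructed by the standard inductive enumeration: enumerate the poset and at each step pick an upper bound of the next enumerated element and the previous chain term), and\/ $\varinjlim$ is computed along cofinal subsystems, so we may assume an $\mathbb{N}$-indexed system $(\F_n,f_n)$ of\/ $1$\+strictly flat contramodules.

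Next, I would form the shift morphism $\sigma\colon\coprod_n\F_n\rarrow\coprod_n\F_n$ in\/ $\R\contra$, defined on coproduct injections by $\sigma\iota_n=\iota_{n+1}f_n$, so that the direct limit is the cokernel of $\id-\sigma$. Setting $\I=\im(\id-\sigma)\subset\coprod_n\F_n$, I obtain a short exact sequence $0\rarrow\I\rarrow\coprod_n\F_n\rarrow\varinjlim_n\F_n\rarrow0$ in\/ $\R\contra$, to which I apply the long exact\/ $\Ctrtor$ sequence~\eqref{ctrtor-contramod-arg-long-sequence}. Lemma~\ref{1-strictly-flat-class-coproduct-closed} gives $\Ctrtor^\R_1(\N,\coprod_n\F_n)=0$ for every discrete right\/ $\R$\+module\/ $\N$, which reduces the desired vanishing $\Ctrtor^\R_1(\N,\varinjlim_n\F_n)=0$ to injectivity of the natural map\/ $\N\ocn_\R\I\rarrow\N\ocn_\R\coprod_n\F_n$.

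For this injectivity, my idea is to factor $\id-\sigma=\iota\circ q$ with $q\colon\coprod_n\F_n\twoheadrightarrow\I$ and $\iota\colon\I\hookrightarrow\coprod_n\F_n$, apply the right-exact functor\/ $\N\ocn_\R{-}$, and use the canonical isomorphism\/ $\N\ocn_\R\coprod_n\F_n\cong\bigoplus_n(\N\ocn_\R\F_n)$ from Section~\ref{prelim-contratensor} to identify the composition $(\N\ocn_\R\iota)\circ(\N\ocn_\R q)=\N\ocn_\R(\id-\sigma)$ with the classical telescope endomorphism $\id-\sigma_*$ of this abelian-group direct sum. Injectivity of $\id-\sigma_*$ on a direct sum is a one-line elementary check, and since\/ $\N\ocn_\R q$ is surjective by right-exactness, an injective composition whose first factor is surjective forces the second factor\/ $\N\ocn_\R\iota$ to be injective as well, completing the argument.

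The step I expect to be the main obstacle is precisely this last point. I deliberately \emph{do not} try to prove that $\id-\sigma$ itself is monic in\/ $\R\contra$: the forgetful functor\/ $\R\contra\rarrow\R\modl$ does not preserve coproducts, so the standard module-theoretic telescope argument does not transfer, and I see no quick way to verify injectivity directly inside\/ $\R\contra$. Passing instead to the image subobject\/ $\I$ and only checking injectivity after contratensoring with\/ $\N$ bypasses this complication by reducing everything to the familiar injectivity of the telescope in\/ $\Ab$.
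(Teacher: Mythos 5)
Your proposal is correct and follows essentially the same route as the paper's proof: pass to a cofinal $\mathbb{N}$-indexed chain, present $\varinjlim_n\F_n$ as the cokernel of $\id-\mathit{shift}$ on $\coprod_n\F_n$, take the image subobject to get a short exact sequence, and deduce contratensor purity from the injectivity of the classical telescope endomorphism on $\bigoplus_n(\N\ocn_\R\F_n)$ combined with the surjectivity of $\N\ocn_\R q$ (the paper then invokes the purity criterion for $1$-strictly flat middle terms, which is the same content as your appeal to the long exact $\Ctrtor$ sequence together with Lemma~\ref{1-strictly-flat-class-coproduct-closed}). Your remark that one should not attempt to prove $\id-\mathit{shift}$ monic in $\R\contra$ is also consistent with the paper, which establishes that only later and only under a separatedness hypothesis.
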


\begin{proof}
 Let $\F_1\rarrow\F_2\rarrow\F_3\rarrow\dotsb$ be a sequence
of left $\R$\+contramodules and $\R$\+contramodule morphisms
between them.
 Then the direct limit $\varinjlim_n\F_n$ is the cokernel of the morphism
$\id-\mathit{shift}\:\coprod_{n=1}^\infty\F_n\rarrow\coprod_{n=1}^\infty
\F_n$.
 Denote the image of this morphism by~$\fL$.
 Arguing as in the proof of
Lemma~\ref{1-strictly-flat-class-coproduct-closed}, we have
a surjective morphism $\coprod_{n=1}^\infty\F_n\rarrow\fL$ and
an exact sequence $0\rarrow\fL\rarrow\coprod_{n=1}^\infty\F_n\rarrow
\varinjlim_n\F_n\rarrow0$.
 The morphism $\N\ocn_\R\coprod_n\F_n\rarrow\N\ocn_\R\coprod_n\F_n$
is injective (being isomorphic to the morphism
$\coprod_n\N\ocn_\R\nobreak\F_n\rarrow\coprod_n\N\ocn_\R\F_n$) for every
discrete right $\R$\+module~$\N$.
 At the same time, the morphism $\N\ocn_\R\coprod_n\F_n\rarrow
\N\ocn_\R\fL$ is surjective.
 It follows that the morphism $\N\ocn_\R\coprod_n\F_n\rarrow\N\ocn_\R\fL$
is an isomorphism and the morphism $\N\ocn_\R\fL\rarrow\N\ocn_\R
\coprod_n\F_n$ is injective, that is, the short exact sequence
$0\rarrow\fL\rarrow\coprod_n\F_n\rarrow\varinjlim_n\F_n\rarrow0$ is
contratensor pure.
 Now, if the left $\R$\+contramodules $\F_n$ are $1$\+strictly flat
for all $n\ge1$, then the left $\R$\+contramodule $\coprod_n\F_n$ is
$1$\+strictly flat by
Lemma~\ref{1-strictly-flat-class-coproduct-closed}, and it follows that
the left $\R$\+contramodule $\varinjlim_n\F_n$ is also
$1$\+strictly flat.
\end{proof}

 In fact, the class of all $1$\+strictly flat left $\R$\+contramodules
is closed under \emph{all} direct limits in $\R\contra$.
 This is the result of~\cite[Corollary~7.1]{BPS}.

 A left $\R$\+contramodule $\C$ is said to be \emph{separated} if
the intersection of its subgroups $\I\tim\C$, with $\I\subset\R$
ranging over all the open right ideals in $\R$, vanishes:
$\bigcap_{\I\subset\R}\I\tim\C=0$.
 Following the discussion at the end of
Section~\ref{prelim-reductions}, all projective
left $\R$\+contramodules are separated.
 Over a topological ring with a countable base of neighborhoods of
zero, all flat contramodules are separated~\cite[Corollary~6.15]{PR}. 

\begin{lem} \label{separated-telescope-exact}
 Let\/ $\F_1\rarrow\F_2\rarrow\F_3\rarrow\dotsb$ be a sequence of
left\/ $\R$\+contramodules and contramodule morphisms between them.
 Assume that the left\/ $\R$\+contramodule\/ $\coprod_{n=1}^\infty
\F_n$ is separated.
 Then the natural short sequence of left\/ $\R$\+contramodules\/
\begin{equation} \label{telescope}
 0\lrarrow\coprod\nolimits_{n=1}^\infty\F_n\lrarrow
 \coprod\nolimits_{n=1}^\infty\F_n\lrarrow
 \varinjlim\nolimits_{n\ge1}\F_n\lrarrow0
\end{equation}
is exact.
\end{lem}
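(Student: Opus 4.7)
The rightmost map is surjective and the sequence is exact at the middle term by the general categorical fact that the colimit of a sequence $\F_1\to\F_2\to\dotsb$ in a cocomplete abelian category is the cokernel of $\id-\mathit{shift}\:\coprod_n\F_n\rarrow\coprod_n\F_n$; this identification was already used in the proof of Lemma~\ref{1-strictly-flat-class-countable-colimit-closed}. The content of the present lemma is therefore the \emph{injectivity} of the telescope map. This is not automatic because, as noted in Section~\ref{prelim-contramodules}, the forgetful functor $\R\contra\rarrow\R\modl$ does not preserve coproducts, so the elementary ``finite-support / first-nonzero-coordinate'' argument available in the module category is not directly applicable to the coproduct $\F=\coprod_{n=1}^\infty\F_n$ formed in $\R\contra$.

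The plan is to reduce the problem modulo every open right ideal $\I\subset\R$ and invoke the separatedness hypothesis at the end. For each open right ideal $\I\subset\R$, Section~\ref{prelim-reductions} provides a natural isomorphism $\F/(\I\tim\F)\cong(\R/\I)\ocn_\R\F$; since $(\R/\I)\ocn_\R{-}$ preserves colimits and coproducts of abelian groups are direct sums, this yields a natural isomorphism of abelian groups
$$
 \F/(\I\tim\F)\;\cong\;\coprod_{n=1}^\infty\F_n/(\I\tim\F_n),
$$
under which the endomorphism of $\F/(\I\tim\F)$ induced by $\id-\mathit{shift}$ corresponds to the classical telescope endomorphism of the direct sum $\coprod_n\F_n/(\I\tim\F_n)$ of abelian groups. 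The latter is manifestly injective: if $(\bar x_n)$ has only finitely many nonzero components and satisfies $\bar x_n=\bar f_{n-1}(\bar x_{n-1})$ for all $n\ge1$ (with $\bar x_0=0$), then $\bar x_n=0$ for all~$n$ by induction.

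To conclude, suppose $x\in\F$ lies in the kernel of $\id-\mathit{shift}$. For every open right ideal $\I\subset\R$, the image of $x$ in the abelian group $\F/(\I\tim\F)$ lies in the kernel of the induced telescope endomorphism, hence vanishes by the previous paragraph. Thus $x\in\I\tim\F$ for every open right ideal $\I\subset\R$, so $x\in\bigcap_\I\I\tim\F$, which is zero by the assumption that $\F$ is separated. The only step requiring any care is the identification of $(\R/\I)\ocn_\R\coprod_n\F_n$ with the \emph{direct sum} of the reduced abelian groups $\F_n/(\I\tim\F_n)$; once this is in hand, the remainder of the argument is completely formal.
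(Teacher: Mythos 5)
Your proposal is correct and follows essentially the same route as the paper's proof: the paper applies the contratensor product $\N\ocn_\R{-}$ (which preserves coproducts) to the telescope sequence for a discrete right $\R$\+module $\N$, observes that the resulting sequence of abelian groups is exact, and then specializes to $\N=\R/\I$ to conclude that the kernel of $\id-\mathit{shift}$ lies in $\bigcap_\I\I\tim\F=0$. Your reduction modulo each open right ideal $\I$ is precisely this specialization, so the two arguments coincide in substance.
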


\begin{proof}
 In any additive category with countable coproducts,
the short sequence~\eqref{telescope} is always right exact
(cf.\ the previous proof).
 The nontrivial assertion is that the left $\R$\+contramodule
morphism $\id-\mathit{shift}\:\F\rarrow\F$, where we put
$\F=\coprod_{n=1}^\infty\F_n$, is injective in the assumptions of
the lemma.
 Indeed, let $\N$ be a discrete right $\R$\+module.
 Applying to~\eqref{telescope} the contratensor product
functor $\N\ocn_\R{-}$, we obtain the short sequence of abelian groups
$$
 0\lrarrow\coprod\nolimits_{n=1}^\infty\N\ocn_\R\F_n\lrarrow
 \coprod\nolimits_{n=1}^\infty\N\ocn_\R\F_n\lrarrow
 \varinjlim\nolimits_{n\ge1}\N\ocn_\R\F_n\lrarrow0,
$$
which is exact since direct limits are exact functors in
the category of abelian groups.
 So the abelian group homomorphism $\N\ocn_\R(\id-\mathit{shift})$
is injective for any discrete right $\R$\+module~$\N$.
 In particular, let $\I\subset\R$ be an open right ideal.
 Then the morphism $\R/\I\ocn_\R(\id-\mathit{shift})\:
\F/\I\tim\F\rarrow\F/\I\tim\F$ is injective.
 It follows that the kernel of the morphism $\id-\mathit{shift}$
is contained in the subgroup (in fact, always a left
$\R$\+subcontramodule) \,$\bigcap_{\I\subset\R}\I\tim\F\subset\F$.
\end{proof}

 Before formulating the next corollary, we notice that, if a left
$\R$\+contramodule $\F$ has projective dimension not exceeding~$n$
(as an object of the abelian category $\R\contra$) and $\F$ is
$n$\+strictly flat, then $\F$ is also $\infty$\+strictly flat.

\begin{cor} \label{countable-colimits-of-projective-contramodules}
 Any countable direct limit of projective left\/ $\R$\+contramodules
has projective dimension not exceeding\/~$1$ in\/ $\R\contra$.
 In particular, any such\/ $\R$\+con\-tramodule is\/
$\infty$\+strictly flat.
\end{cor}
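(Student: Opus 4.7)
The plan is to apply Lemma \ref{separated-telescope-exact} directly. Let $\P_1\rarrow\P_2\rarrow\P_3\rarrow\dotsb$ be a sequence of projective left $\R$\+contramodules, and set $\F=\varinjlim_n\P_n$. First I would observe that the coproduct $\coprod_{n=1}^\infty\P_n$ is itself a projective left $\R$\+contramodule. Indeed, the free contramodule functor $X\longmapsto\R[[X]]$ is left adjoint to the forgetful functor $\R\contra\rarrow\Sets$ (by the adjunction displayed in Section \ref{prelim-contramodules}), so it preserves coproducts: $\coprod_\alpha\R[[X_\alpha]]\cong\R[[\coprod_\alpha X_\alpha]]$. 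Hence a coproduct of free contramodules is free, and a coproduct of direct summands of free contramodules is a direct summand of a free one. Since all projective left $\R$\+contramodules are separated (as recorded just before Lemma \ref{separated-telescope-exact}, following the discussion at the end of Section \ref{prelim-reductions}), the coproduct $\coprod_{n=1}^\infty\P_n$ is separated.

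With separatedness in hand, Lemma \ref{separated-telescope-exact} yields a short exact sequence
$$
 0\lrarrow\coprod\nolimits_{n=1}^\infty\P_n\lrarrow
 \coprod\nolimits_{n=1}^\infty\P_n\lrarrow\F\lrarrow0
$$
in $\R\contra$. Both nontrivial terms are projective, so this is a projective resolution of $\F$ of length at most~$1$, giving $\pd_{\R\contra}\F\le1$.

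For the second assertion, projective left $\R$\+contramodules are $\infty$\+strictly flat and in particular $1$\+strictly flat, so Lemma \ref{1-strictly-flat-class-countable-colimit-closed} implies that $\F$ is $1$\+strictly flat. Combining the bound $\pd_{\R\contra}\F\le1$ with $1$\+strict flatness, the remark inserted immediately before the statement of the corollary (applied with $n=1$) shows that $\F$ is $\infty$\+strictly flat.

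The only step that requires any care is the separatedness of $\coprod_{n=1}^\infty\P_n$, which I expect to be the main obstacle in a literal sense but is in fact routine once one notes that countable coproducts of projectives are projective; after that point, everything is a direct chaining together of Lemma \ref{1-strictly-flat-class-countable-colimit-closed}, Lemma \ref{separated-telescope-exact}, and the preceding remark, with no further computation required.
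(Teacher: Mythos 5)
Your proposal is correct and follows exactly the paper's own argument: a coproduct of projectives is projective and hence separated, so Lemma~\ref{separated-telescope-exact} gives the two-term projective resolution, and the second assertion follows from Lemma~\ref{1-strictly-flat-class-countable-colimit-closed} together with the remark preceding the corollary. The only difference is that you spell out the routine justifications (free contramodules preserving coproducts, the $n=1$ case of the remark) which the paper leaves implicit.
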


\begin{proof}
 The first assertion is a corollary of
Lemma~\ref{separated-telescope-exact}.
  Indeed, a coproduct of projective contramodules is projective, and
any projective contramodule is separated; so
the exact sequence~\eqref{telescope} is a desired projective resolution.
 The second assertion follows immediately from the first one together
with Lemma~\ref{1-strictly-flat-class-countable-colimit-closed}.
\end{proof}

\Section{Projective Covers of Flat Contramodules}
\label{projective-covers-of-flats}

 Let $\sB$ be an abelian category with enough projective objects.
 An epimorphism $p\:P\rarrow C$ in $\sB$ is called a \emph{projective
cover} (of the object $C$) if the object $P$ is projective and, for any
endomorphism $e\:P\rarrow P$, the equation $pe=p$ implies that $e$~is
an automorphism of $P$ (i.~e., $e$~is invertible).

 A subobject $K$ of an object $Q\in\sB$ is said to be \emph{superfluous}
if, for any other subobject $G\subset Q$, the equation $K+G=Q$ implies
that $G=Q$.
 If a subobject $K\subset Q$ is superfluous then, for any subobject
$E\subset Q$, the quotient $(K+E)/E$ is a superfluous subobject
of the quotient~$Q/E$.

\begin{lem} \label{superfluous-kernel}
 Let $P\in\sB$ be a projective object.
 Then an epimorphism $p\:P\rarrow C$ in\/ $\sB$ is a projective cover
if and only if its kernel $K$ is a superfluous subobject in\/~$P$.
\end{lem}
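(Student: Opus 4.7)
The plan is to work entirely with subobjects and morphism liftings, since $\sB$ is a general abelian category. The one identity I use repeatedly is that, for any subobject $G\subset P$, one has $K+G=P$ if and only if the composition $G\hookrightarrow P\xrightarrow{p}C$ is an epimorphism: indeed, $p(K+G)=p(G)$ since $p(K)=0$, and conversely if $p(G)=C$, then the short exact sequence $0\to K\to G+K\to p(G)\to 0$ maps into $0\to K\to P\to C\to 0$ via the inclusion $G+K\hookrightarrow P$, forcing $G+K=P$ by the five lemma.

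For the ``only if'' direction, assume $p$ is a projective cover and $K+G=P$. By the observation above, the composition $p\circ i\colon G\to C$ is an epimorphism, where $i\colon G\hookrightarrow P$. Projectivity of $P$ lifts $p$ through this epimorphism, producing $e\colon P\to G$ with $p\circ i\circ e=p$. The endomorphism $ie\colon P\to P$ then satisfies $p(ie)=p$ and so is an automorphism by the projective cover property; in particular $ie$ is epic, whence its factor $i$ is epic, and $G=P$.

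For the ``if'' direction, assume $K$ is superfluous and let $e\colon P\to P$ satisfy $pe=p$. The morphism $\id_P-e$ satisfies $p\circ(\id_P-e)=0$, so it factors through $K\hookrightarrow P$, giving $\im(\id_P-e)\subset K$. Since $\id_P=(\id_P-e)+e$ factors through $\im(\id_P-e)+\im(e)$, one has $P\subset K+\im(e)$; superfluity of $K$ then forces $\im(e)=P$, so $e$ is epic. Projectivity of $P$ now splits the epimorphism $e$, giving a section $s\colon P\to P$ with $es=\id_P$ and a decomposition $P=L\oplus s(P)$, where $L=\ker(e)$. The inclusion $j\colon L\hookrightarrow P$ satisfies $pj=pej=0$, so $L\subset K$; hence $K+s(P)\supset L+s(P)=P$, superfluity gives $s(P)=P$, and $L=0$ follows from the direct sum decomposition. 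Thus $e$ is also monic, hence an automorphism. The only delicate step is the subobject arithmetic in the opening paragraph (the factorizations through $K$ and through $\im(\id_P-e)+\im(e)$); everything else is routine.
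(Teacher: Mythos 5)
Your proof is correct and follows essentially the same route as the paper: lifting $p$ through $G\to C$ in the ``only if'' direction, and in the ``if'' direction showing first that $e$ is epic via superfluity of $K$ applied to $\im(e)$, then splitting $e$ by projectivity and applying superfluity again to kill $\ker(e)$. The extra subobject arithmetic you supply (that $\im(\id_P-e)\subset K$ and $\im(f+g)\subset\im f+\im g$) just makes explicit what the paper leaves implicit.
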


\begin{proof}
 Let $p\:P\rarrow C$ be a projective cover with the kernel $K$, and
let $G\subset P$ be a subobject such that $K+G=P$.
 Then the restriction of~$p$ onto $G$ is an epimorphism $s\:G\rarrow C$.
 Since $P$ is projective, there exists a morphism $f\:P\rarrow G$
making the triangle diagram $P\rarrow G\rarrow C$ commutative.
 Let $e\:P\rarrow P$ be the composition of the morphism~$f$ with
the embedding $G\rarrow P$.
 Then $pe=p$, and by assumption it follows that $e$~is invertible.
 Hence $G=P$.

 Conversely, let $p\:P\rarrow C$ be an epimorphism with a superfluous
kernel $K\subset P$, and let $e\:P\rarrow P$ be an endomorphism
satisfying $pe=p$.
 Let $G\subset P$ be the image of~$e$; then $K+G=P$.
 By assumption, it follows that $G=P$, so $e$~is an epimorphism.
 Then, since $P$ is projective, the kernel $L$ of~$e$ must be
a direct summand of~$P$.
 Denote by $E\subset P$ a complementary direct summand.
 The equation $pe=p$ implies that $L\subset K$, hence $K+E=P$.
 Again by assumption, it follows that $E=P$, so $L=0$ and $e$~is
an automorphism of~$P$.
\end{proof}

\begin{prop} \label{projective-covers-of-flat-modules-prop}
 Let $R$ be an associative ring.
 Then any flat $R$\+module that has a projective cover is projective.
\end{prop}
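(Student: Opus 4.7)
Let $p\colon P\to F$ be a projective cover of $F$, with kernel $K=\ker p$. It suffices to show $K=0$, since then $p$ becomes an isomorphism and $F\cong P$ is projective.

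The starting point is Lemma~\ref{superfluous-kernel}, which gives that $K$ is a superfluous submodule of~$P$: no proper submodule $G\subsetneq P$ satisfies $G+K=P$. A consequence used below is that $K$ is contained in the radical of~$P$: if a maximal submodule $M\subsetneq P$ failed to contain $K$, then $M+K=P$ with $M\ne P$ would contradict superfluousness; so $K\subseteq\mathrm{rad}(P)=J(R)\cdot P$, where $J(R)$ denotes the Jacobson radical of $R$ (the last equality uses projectivity of~$P$).

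Flatness of $F$ enters through the equational criterion. Fix an element $k\in K$ and write $k=\sum_{i=1}^n a_iy_i$ with $a_i\in J(R)$ and $y_i\in P$. In $F$ the equation $\sum_i a_ip(y_i)=p(k)=0$ holds, so by flatness there exist elements $x_1,\dots,x_m\in F$ and coefficients $b_{ij}\in R$ with $p(y_i)=\sum_j b_{ij}x_j$ and $\sum_i a_ib_{ij}=0$ for every~$j$. Lifting each $x_j$ to $\tilde x_j\in P$ through $p$ and setting $k_i=y_i-\sum_j b_{ij}\tilde x_j\in K$, one computes $k=\sum_i a_ik_i$. Iterating, $k$ lies in $J(R)^N\cdot K$ for every $N\ge 1$, and a~fortiori in $\bigcap_N J(R)^N\cdot P$.

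\textbf{Main obstacle.} The remaining step is to conclude $k=0$. In general $\bigcap_N J(R)^N P$ need not vanish, so the conclusion has to be forced by the superfluousness of~$K$ rather than by any Krull intersection. The strategy is to package the iteration above so that, for an arbitrary sequence $a_1,a_2,\dots\in J(R)$, one extracts a descending chain of submodules of~$P$ whose failure to terminate at zero would violate the superfluousness of~$K$; equivalently, one derives from the existence of the projective cover $p\colon P\to F$ a pointwise T\+nilpotence property of $J(R)$ acting on~$F$. This T\+nilpotence extraction, a pointwise analogue of Bass's classical derivation of T\+nilpotency of $J(R)$ from perfectness, is the central technical step and the main obstacle of the proof.
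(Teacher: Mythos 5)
Your proposal does not reach the conclusion: you stop at the point you yourself call the ``main obstacle,'' and that obstacle is exactly where the content of the proposition lies. What you have actually established is that $K=\ker(p)$ satisfies $K\subseteq J(R)P$ and $K=J(R)K$; but this does not imply $K=0$ (Nakayama is unavailable for non-finitely-generated $K$, and $J(R)$ need not be left T\+nilpotent for a general ring $R$), and superfluousness of $K$ has so far been used only to place $K$ inside $\mathrm{rad}(P)$, which is far weaker than the projective cover property. The proposed rescue --- extracting a ``pointwise T\+nilpotence'' of $J(R)$ acting on $F$ from the existence of the cover --- is not carried out; moreover, your iteration produces a branching tree of expressions $k=\sum_i a_ik_i$ rather than a single sequence $a_1,a_2,\dots$ of elements to which such a statement could be applied, and no descending chain of submodules of $P$ is actually exhibited. (The paper itself gives no argument here; it simply cites \cite[Section~36.3]{Wis}.)

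The missing idea is to use flatness not through the equational criterion on elements of $J(R)$, but through the Villamayor-type characterization of flat quotients of projectives: if $P$ is projective and $P/K$ is flat, then for every $k\in K$ there exists a homomorphism $\theta\:P\rarrow K$ with $\theta(k)=k$. (This is proved by essentially the same manipulation you perform, packaged so as to produce a map $P\rarrow K$ rather than a membership statement $k\in J(R)K$.) Given such a~$\theta$, set $e=\id_P-\theta$. Since $\theta$ takes values in $K=\ker(p)$, one has $pe=p$, so by the definition of a projective cover $e$~is an automorphism of~$P$; but $e(k)=k-\theta(k)=0$, whence $k=0$. Thus $K=0$ and $p$~is an isomorphism. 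Note that this argument uses the full strength of the projective-cover condition (``$pe=p$ implies $e$~invertible''), not merely the superfluousness of $K$ filtered through $\mathrm{rad}(P)$.
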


\begin{proof}
 A proof of this result can be found in~\cite[Section~36.3]{Wis}.
\end{proof}

%
%
%
%

 We refer to Sections~\ref{prelim-reductions}
and~\ref{prelim-strongly-closed-ideals} for the discussion of
reductions of contramodules modulo strongly closed ideals.

\begin{lem} \label{cover-reduction-lem}
 Let\/ $\R$ be a complete, separated topological ring with a base of
neighborhoods of zero formed by open right ideals, $\J$ be a strongly
closed two-sided ideal in\/ $\R$, and\/ $\T=\R/\J$ be
the topological quotient ring.
 Assume that a left\/ $\R$\+contramodule\/ $\C$ has a projective cover
$p\:\P\rarrow\C$ in\/ $\R\contra$.
 Then the induced map\/ $\bar p\:\P/\J\tim\P\rarrow\C/\J\tim\C$ is
a projective cover of the left\/ $\T$\+contramodule\/ $\C/\J\tim\C$.
\end{lem}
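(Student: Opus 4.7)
The plan is to reduce the claim to the combination of two standard facts: (i) the contraextension-of-scalars functor along $q\:\R\rarrow\T$ preserves projectivity and surjectivity, and (ii) a superfluous subobject stays superfluous after passing to a quotient. Throughout, I will write $q^\sharp\:\R\contra\rarrow\T\contra$ for the functor $\C\longmapsto\C/(\J\tim\C)$; by the discussion in Section~\ref{prelim-strongly-closed-ideals}, the strong closedness of $\J$ ensures that $q^\sharp$ is a well-defined left adjoint to the fully faithful contrarestriction-of-scalars functor $q_\sharp\:\T\contra\rarrow\R\contra$. In particular, $q^\sharp$ is right exact and takes the free $\R$\+contramodule $\R[[X]]$ to the free $\T$\+contramodule $\T[[X]]$.

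First I would let $\fK\subset\P$ denote the kernel of~$p$ and apply the right exact functor $q^\sharp$ to the short exact sequence $0\rarrow\fK\rarrow\P\rarrow\C\rarrow0$. This yields an exact sequence $\fK/(\J\tim\fK)\rarrow\P/(\J\tim\P)\rarrow\C/(\J\tim\C)\rarrow0$, which shows that $\bar p$ is surjective and identifies its kernel with the image $\bar\fK=(\fK+\J\tim\P)/(\J\tim\P)$ of $q^\sharp(\fK)$ in $q^\sharp(\P)$. Next, since $\P$ is projective in $\R\contra$, it is a direct summand of some $\R[[X]]$; applying $q^\sharp$, I see that $\P/(\J\tim\P)$ is a direct summand of the free $\T$\+contramodule $\T[[X]]$, hence projective in $\T\contra$.

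It remains to show that the subcontramodule $\bar\fK\subset\P/(\J\tim\P)$ is superfluous. By Lemma~\ref{superfluous-kernel}, the assumption that $p$ is a projective cover means exactly that $\fK$ is superfluous in~$\P$. I would then invoke the observation (recorded in the paragraph preceding Lemma~\ref{superfluous-kernel}) that if $\fK\subset Q$ is a superfluous subobject and $E\subset Q$ is any subobject, then $(\fK+E)/E\cong\fK/(\fK\cap E)$ is superfluous in $Q/E$; applying this with $Q=\P$ and $E=\J\tim\P$ gives the desired superfluousness of $\bar\fK$ inside $\P/(\J\tim\P)$, taken in the abelian category $\R\contra$. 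Because $\T\contra$ is a full subcategory of $\R\contra$ closed under subobjects (again Section~\ref{prelim-strongly-closed-ideals}), subobjects of $\P/(\J\tim\P)$ in the two categories coincide, so $\bar\fK$ is also superfluous in $\T\contra$. By Lemma~\ref{superfluous-kernel} applied in $\T\contra$, the epimorphism $\bar p$ is a projective cover.

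There is no real obstacle here; the argument is essentially a diagram chase combined with the formal adjoint-functor properties of $q^\sharp$. The one subtlety is to make sure that the notion of \emph{superfluous subobject} transfers correctly between $\R\contra$ and its full subcategory $\T\contra$, which is taken care of by the closure properties of $\T\contra\subset\R\contra$ that are precisely guaranteed by the hypothesis that $\J$ is strongly closed.
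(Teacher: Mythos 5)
Your proposal is correct and follows essentially the same route as the paper's proof: reduce the short exact sequence $0\rarrow\fK\rarrow\P\rarrow\C\rarrow0$ modulo $\J$, observe that $\P/\J\tim\P$ remains projective, and invoke the fact (recorded just before Lemma~\ref{superfluous-kernel}) that a superfluous subobject stays superfluous in a quotient, so that $\fK/(\J\tim\P)\cap\fK$ is superfluous in $\P/\J\tim\P$. The only difference is that you spell out the adjunction formalism for $q^\sharp$ and the transfer of superfluousness between $\R\contra$ and the full subcategory $\T\contra$, points the paper leaves implicit.
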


\begin{proof}
 Set $\fK=\ker(p)$.
 Then $0\rarrow\fK\rarrow\P\rarrow\C\rarrow0$ is a short exact
sequence in $\R\contra$ and $0\rarrow\fK/(\J\tim\P)\cap\fK\rarrow
\P/\J\tim\P\rarrow\C/\J\tim\C\rarrow0$ is a short exact sequence
in $\T\contra$.
 The left $\T$\+contramodule $\P/\J\tim\P$ is projective, since
the left $\R$\+contramodule $\P$ is; and the $\T$\+subcontramodule
$\fK/(\J\tim\P)\cap\fK\subset\P/\J\tim\P$ is superfluous, since
the $\R$\+subcontramodule $\fK\subset\P$ is.
\end{proof}

%

\begin{prop} \label{projective-covers-of-flat-contramodules-prop}
 Let\/ $\R$ be a complete, separated topological ring with a base of
neighborhoods of zero formed by open two-sided ideals, and let\/ $\F$
be a\/ $1$\+strictly flat left\/ $\R$\+contramodule.
 Assume that\/ $\F$ has a projective cover $p\:\P\rarrow\F$ in\/
$\R\contra$.
 Then the map $p$~is an isomorphism and the\/ $\R$\+contramodule\/
$\F\cong\P$ is projective.
\end{prop}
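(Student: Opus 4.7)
The plan is to let $\fK = \ker(p)$, giving a short exact sequence $0 \to \fK \to \P \to \F \to 0$ in $\R\contra$, and prove that $\fK = 0$ by reducing modulo each open two-sided ideal $\I \subset \R$ and invoking Bass's classical result (Proposition \ref{projective-covers-of-flat-modules-prop}).

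First I would use that $\F$ is $1$-strictly flat: this means precisely that the sequence $0 \to \fK \to \P \to \F \to 0$ is contratensor pure, so applying $(\R/\I)\ocn_\R{-}$ for any open two-sided ideal $\I \subset \R$ yields the short exact sequence
\[
 0 \lrarrow \fK/(\I\tim\fK) \lrarrow \P/(\I\tim\P) \lrarrow \F/(\I\tim\F) \lrarrow 0
\]
of left $\R/\I$-modules (using the identification $(\R/\I)\ocn_\R\C \cong \C/(\I\tim\C)$ recalled in Section~\ref{prelim-reductions}). Since $\I$ is open, it is strongly closed, so Lemma~\ref{cover-reduction-lem} tells us that the induced map $\bar p\:\P/\I\tim\P \to \F/\I\tim\F$ is a projective cover in $(\R/\I)\contra = (\R/\I)\modl$. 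Moreover, since $\F$ is $1$-strictly flat it is in particular flat, and then $\F/(\I\tim\F)$ is flat over $\R/\I$ by the discussion at the start of Section~\ref{flat-contramodules-secn}.

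Now Proposition~\ref{projective-covers-of-flat-modules-prop} (the classical result that a flat module with a projective cover must be projective) implies that $\bar p$ is an isomorphism. Hence $\fK/(\I\tim\fK) = 0$, i.e., $\fK = \I\tim\fK$. Since $\I\tim\fK \subset \I\tim\P$ (the contraaction maps are compatible with the inclusion $\fK \hookrightarrow \P$), we conclude that $\fK \subset \I\tim\P$ for every open two-sided ideal $\I \subset \R$.

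Finally, since $\P$ is projective, it is separated in the sense recalled in Section~\ref{prelim-reductions}: the natural map $\P \to \varprojlim_\I \P/(\I\tim\P)$ is an isomorphism, and in particular $\bigcap_\I \I\tim\P = 0$, where the intersection ranges over all open (right, hence in particular two-sided) ideals $\I \subset \R$. Therefore $\fK = 0$, the morphism $p$ is an isomorphism, and $\F \cong \P$ is projective. I do not expect a serious obstacle here: the two essential ingredients — contratensor purity of the sequence in question, and that projective covers survive reduction modulo strongly closed two-sided ideals — are already delivered by the hypothesis of $1$-strict flatness and by Lemma~\ref{cover-reduction-lem} respectively. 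The only minor point requiring care is the use of a base of \emph{two-sided} open ideals, which is what allows the contratensor product reduction $(\R/\I)\ocn_\R\F \cong \F/(\I\tim\F)$ to produce a flat $\R/\I$-module and permits the separatedness argument to run over a cofinal family of two-sided ideals.
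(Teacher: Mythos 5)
Your proposal is correct and follows essentially the same route as the paper's proof: reduce modulo each open two-sided ideal $\I$, use $\Ctrtor^\R_1(\R/\I,\F)=0$ to keep the sequence exact, apply Lemma~\ref{cover-reduction-lem} and Proposition~\ref{projective-covers-of-flat-modules-prop} to kill $\fK/(\I\tim\fK)$, and conclude via $\bigcap_\I\I\tim\P=0$ for the projective contramodule~$\P$. The only cosmetic difference is that you phrase the first step as contratensor purity of the sequence rather than directly quoting the vanishing of $\Ctrtor_1$; these are the same fact.
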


\begin{proof}
 Let $\fL\subset\F$ be the kernel of the epimorphism~$p$.
 For any open two-sided ideal $\I\subset\R$, we have a short sequence of
left $\R/\I$\+modules $0\rarrow\fL/\I\tim\fL\rarrow\P/\I\tim\P
\rarrow\F/\I\tim\F\rarrow0$, which is exact since
$\Ctrtor^\R_1(\R/\I,\F)=0$.
 The left $\R/\I$\+module $\F/\I\tim\F$ is flat, since
the left $\R$\+contramodule $\F$ is.
 Furthermore, the morphism $\P/\I\tim\P\rarrow\F/\I\tim\F$ is
a projective cover in the category of left $\R/\I$\+modules by
Lemma~\ref{cover-reduction-lem} (applied in the particular case of
an open two-sided ideal $\I$ and a discrete quotient ring $\T=\R/\I$).
 Using Proposition~\ref{projective-covers-of-flat-modules-prop}, we
conclude that the $\R/\I$\+module $\F/\I\tim\F$ is projective and
$\fL/\I\tim\fL=0$ for every open two-sided ideal $\I\subset\R$.
 Since $\P$ is a projective left $\R$\+contramodule, one has
$\P=\varprojlim_\I\P/\I\tim\P$ (see Section~\ref{prelim-reductions}),
hence $\fL=\bigcap_\I \I\tim\fL\,\subset\,\bigcap_\I \I\tim\P=0$.
\end{proof}

 With the additional assumption that the projective dimension of
the left $\R$\+contra\-module $\F$ does not exceed~$1$, the result of
Proposition~\ref{projective-covers-of-flat-contramodules-prop} can
be extended to complete, separated topological rings with a base of
neighborhoods of zero formed by open right ideals.
 This is the assertion of~\cite[Theorem~3.1]{BPS}.

%
%


\Section{Bass Flat Contramodules}

 Let $a_1$, $a_2$, $a_3$,~\dots\ be a sequence of
elements in the topological ring~$\R$.
 For every $n\ge1$, the multiplication by~$a_n$ on the right is
a left $\R$\+contramodule morphism $\R\rarrow\R$ (where $\R$ is
viewed as a free left $\R$\+contramodule with one generator).
 The direct limit
$$
 \B=\varinjlim\,(\R\overset{*a_1}\lrarrow\R\overset{*a_2}\lrarrow\R
 \overset{*a_3}\lrarrow\dotsb)
$$
is called the \emph{Bass flat left\/ $\R$\+contramodule} associated
with the sequence of elements $(a_n\in\R)_{n\ge1}$.
 According to
Corollary~\ref{countable-colimits-of-projective-contramodules},
the Bass flat left $\R$\+contramodules are $\infty$\+strictly flat
and have projective dimension not exceeding~$1$.

 In particular, when the topological ring $\R=R$ is discrete,
the above construction specializes to the classical definition of
a \emph{Bass flat left $R$\+module}.

\begin{lem} \label{bass-modules-projective-implies-perfect}
 If all Bass flat left modules over an associative ring $R$ are
projective, then all flat left $R$\+modules are projective (i.~e.,
the ring $R$ is left perfect).
\end{lem}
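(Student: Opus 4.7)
The plan is to argue via the classical theorem of Bass~\cite{Bas}: an associative ring $R$ is left perfect if and only if every descending chain of principal right ideals in $R$ terminates. Arguing by contraposition, I would assume $R$ is not left perfect. Then there is a strictly descending chain $b_1 R\supsetneq b_2 R\supsetneq\dotsb$; setting $a_1=b_1$ and writing $b_{n+1}=b_n a_{n+1}$, one may put the chain in the form $a_1 R\supsetneq a_1 a_2 R\supsetneq a_1 a_2 a_3 R\supsetneq\dotsb$ for some sequence $a_1,a_2,\dotsc\in R$. The goal is then to derive a contradiction from the assumption that the associated Bass flat left $R$\+module $\B$ is projective.

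First I would invoke Lemma~\ref{separated-telescope-exact} (noting that over the discrete ring $R$ every module is trivially separated, since the zero ideal is open) to produce the short exact sequence
\[
 0\lrarrow F\lrarrow F\overset{p}{\lrarrow}\B\lrarrow 0,
\]
with $F=\bigoplus_{n=1}^\infty R$ and first arrow $\id-\mathit{shift}$. Since $\B$ is projective, this splits; fix a section $s\colon\B\to F$. Denoting by $e_n\in\B$ the image of $1\in R$ from the $n$\+th summand, one has $e_n=a_n e_{n+1}$ in $\B$, and hence $e_1=a_1 a_2\dotsm a_n\, e_{n+1}$ for every $n\ge1$. Write $s(e_1)=\sum_{k=1}^N x_k e_k$ for some $N\ge1$ and $x_k\in R$. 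Unwinding the equality $p(s(e_1))=e_1$ in the direct limit $\B$ would then yield an integer $M\ge N$ such that
\[
 \sum_{k=1}^N x_k\,a_k a_{k+1}\dotsm a_M \;=\; a_1 a_2\dotsm a_M\quad\text{in }R.
\]

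The key observation that closes the argument is that each $x_k$ lies in the intersection $\bigcap_{n\ge1} a_1 a_2\dotsm a_n\,R$. Indeed, composing $s$ with the $k$\+th coordinate projection $\pi_k\colon F\to R$ gives a left $R$\+module homomorphism $\pi_k\circ s\colon\B\to R$; but since $\B=\varinjlim_n R$ with transitions given by right multiplication by $a_n$, any such map is determined by a compatible family $(c_n)_{n\ge1}\in\prod_n R$ satisfying $c_n=a_n c_{n+1}$, so $c_1\in a_1\dotsm a_n R$ for every~$n$. Applied to $\pi_k\circ s$, this gives $x_k\in a_1\dotsm a_{M+1}R$; writing $x_k=a_1\dotsm a_{M+1}\,r_k$ and substituting into the displayed equation, one factors $a_1\dotsm a_{M+1}$ on the right to obtain
\[
 a_1 a_2\dotsm a_M \;=\; a_1 a_2\dotsm a_{M+1}\cdot\Bigl(\sum_{k=1}^N r_k\,a_k a_{k+1}\dotsm a_M\Bigr),
\]
contradicting the strict inclusion $a_1\dotsm a_M R\supsetneq a_1\dotsm a_{M+1}R$.

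The main obstacle in executing this plan is the intermediate bookkeeping that turns $p\circ s=\id_\B$ into the displayed equation in $R$: one has to translate equality $r\,e_{M+1}=r'\,e_{M+1}$ in the direct limit $\B$ into the relation $(r-r')\,a_{M+1}\dotsm a_{M+j}=0$ in $R$ for a suitable~$j$, and keep track of indices through the telescope shift. All other steps are short once this equation is in hand.
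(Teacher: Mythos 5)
Your proposal is correct and takes essentially the same route as the paper, whose proof is a citation to the implication (5)$\,\Longrightarrow\,$(6) in Bass's Theorem~P (cf.\ \cite[Lemma~28.2 and Theorem~28.4]{AF}): projectivity of the Bass module attached to a strictly descending chain $a_1R\supsetneq a_1a_2R\supsetneq\dotsb$ of principal right ideals forces the chain to terminate, and the remaining implications are supplied by Bass's theorem. Your key observation, that any homomorphism $\B\rarrow R$ sends $e_1$ into $\bigcap_n a_1\dotsm a_nR$, is a clean discrete specialization of the computation the paper performs in the contramodule setting in Proposition~\ref{bass-projective-implies-coperfect}.
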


\begin{proof}
 Clear from the proof of the implication~(5)$\,\Longrightarrow\,$(6)
in~\cite[Theorem~P]{Bas}, which only uses projectivity of the Bass
flat modules.
 (For a more recent exposition,
see~\cite[proof of Theorem~28.4\,(d)$\,\Longrightarrow\,$(e)]{AF}.)
\end{proof}

%
%

\begin{lem} \label{bass-contramodule-vanishes}
 Let\/ $\R$ be a complete, separated topological ring with a base of
neighborhoods of zero formed by open right ideals.
 Let $a_1$, $a_2$, $a_3$,~\dots\ be a sequence of elements in\/ $\R$,
and let\/ $\B$ be the related Bass flat left\/ $\R$\+contramodule.
 Then one has\/ $\B=0$ if and only if for every $m\ge 1$ the sequence
of elements~$a_m$, $a_ma_{m+1}$,~\dots, $a_ma_{m+1}\dotsm a_n$,~\dots\
converges to zero in the topology of\/ $\R$ as $n\to\infty$.
\end{lem}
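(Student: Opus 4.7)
My plan is to realize $\B$ explicitly as a cokernel in $\R\contra$, reduce the vanishing of $\B$ to a concrete solvability question for a recurrence in $\R[[\mathbb{N}]]$, and then handle the two directions directly from the formula.

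Since $\R\contra$ is cocomplete and abelian, the direct limit $\B$ is the cokernel of $\id-t\:\coprod_{n\ge1}\R\rarrow\coprod_{n\ge1}\R$ in $\R\contra$, where $t$ acts as right multiplication by $a_n$ from the $n$\+th summand into the $(n+1)$\+st summand. Using the identification $\coprod_{n\ge1}\R=\R[[\mathbb{N}]]$ and the description of its elements as sequences $(r_n)_{n\ge1}$ in $\R$ converging to zero (Section~\ref{prelim-linear-comb}), the map $\id-t$ sends $(r_n)_n$ to $(r_n-r_{n-1}a_{n-1})_n$ with the convention $r_0=0$. Since the forgetful functor $\R\contra\rarrow\Ab$ is exact, I conclude that $\B=0$ if and only if this map is surjective as a map of abelian groups.

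The recurrence $r_n-r_{n-1}a_{n-1}=s_n$ has the unique solution
$$
 r_n=\sum_{k=1}^n s_k\,a_ka_{k+1}\cdots a_{n-1}
$$
(with the empty product for $k=n$ equal to~$1$) in $\R^{\mathbb{N}}$. Thus the vanishing of $\B$ becomes equivalent to the assertion that for every sequence $(s_n)_n$ in $\R$ with $s_n\to0$, the derived sequence $(r_n)_n$ given by the formula above also tends to zero in the topology of~$\R$.

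For the ``only if'' direction, I plug in $s_k=1$ for $k=m$ and $s_k=0$ otherwise; this defines a valid (eventually zero) element of $\R[[\mathbb{N}]]$, and the corresponding $r_n$ equals $a_ma_{m+1}\cdots a_{n-1}$ for all $n>m$, whose convergence to zero is precisely the desired statement at index~$m$. For the ``if'' direction, I take any $(s_n)\in\R[[\mathbb{N}]]$ and any open right ideal $\I\subset\R$, choose $N$ with $s_n\in\I$ for $n\ge N$, and split the sum defining $r_n$ (for $n>N$) into a ``tail'' over $k\ge N$ and a ``head'' over $k<N$. The tail automatically lies in $\I$ because $\I$ is a right ideal. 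The head is a fixed finite sum; for each $k<N$ the hypothesized convergence $a_ka_{k+1}\cdots a_{n-1}\to0$ combined with condition~(ii) from Section~\ref{prelim-topol-rings} (yielding an open right ideal $\J$ with $s_k\J\subset\I$) forces $s_ka_k\cdots a_{n-1}\in\I$ for $n$ sufficiently large. The main technical point is precisely this last step: left multiplication by a fixed element must convert a null sequence into a null sequence, which is not automatic when the topology is generated by right ideals and genuinely requires axiom~(ii).
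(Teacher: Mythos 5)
Your proposal is correct and follows essentially the same route as the paper: both present $\B$ as the cokernel of $\id-\mathit{shift}$ on $\coprod_{n}\R\cong\R[[\mathbb N]]$, observe that the recurrence $r_n-r_{n-1}a_{n-1}=s_n$ has the unique candidate solution $r_n=\sum_{k\le n}s_ka_k\dotsm a_{n-1}$, and deduce the ``only if'' direction by feeding in the delta sequences. The one genuine (minor) difference is in the ``if'' direction: the paper checks surjectivity only on the free generators $x_m$, exhibiting the telescoping preimage $\sum_{n\ge m}a_m\dotsm a_{n-1}y_n\in\R[[y_1,y_2,\dotsc]]$ and letting the contramodule structure of the image do the rest, whereas you verify surjectivity on an arbitrary null sequence $(s_n)$ directly, which is why you need the head/tail splitting and condition~(ii) of Section~\ref{prelim-topol-rings} --- a step you correctly identify as the technical crux and carry out properly.
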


\begin{proof}
 The left $\R$\+contramodule $\B$ can be constructed as the cokernel
of the left $\R$\+contramodule morphism $\id-\mathit{shift}\:
\coprod_{n=1}^\infty\R\rarrow\coprod_{n=1}^\infty\R$.
 In other words, $\B$ is the cokernel of the morphism of free left
$\R$\+contramodules
$$
 f=f_{a_1,a_2,a_3,\dotsc}\:\R[[y_1,y_2,y_3,\dotsc]]
 \lrarrow\R[[x_1,x_2,x_3,\dotsc]]
$$
defined on the generators by the rule $f(y_n)=x_n-a_nx_{n+1}$.
 According to 
Lemma~\ref{separated-telescope-exact},
the morphism~$f$ is injective (but we will not need to use this fact).

 One has $\B=0$ if and only if the image of~$x_m$ in $\B$ vanishes
for every $m\ge1$.
 We will show that the image of $x_m$ in $\B$ vanishes if and only if
the sequence of elements~$a_m$, $a_ma_{m+1}$,~\dots, $a_ma_{m+1}\dotsm
a_n$,~\dots\ converges to zero in~$\R$.
 More generally, given an element $r\in\R$, the image of the element
$rx_m$ under the map $\R[[x_1,x_2,x_3,\dotsc]]\rarrow\B$ vanishes
if and only if the sequence of elements~$ra_m$, $ra_ma_{m+1}$,~\dots,
$ra_ma_{m+1}\dotsm a_n$,~\dots\ converges to zero in $\R$ as
$n\to\infty$ (cf.~\cite[Lemma~1.2]{Bas} and~\cite[Lemma~28.1]{AF}).

 ``If'': assuming that the sequence~$ra_m$, $ra_ma_{m+1}$,~\dots\
converges to zero, we have to show that the element~$rx_m$ belongs to
the image of the morphism~$f$.
 Indeed, one has $\sum_{n=m}^\infty ra_m\dotsm a_{n-1}y_n=ry_m+
ra_my_{m+1}+ra_ma_{m+1}y_{m+2}+\dotsb\in\R[[y_1,y_2,y_3,\dotsc]]$ and
\begin{multline*}
 f\left(\sum\nolimits_{n=m}^\infty ra_m\dotsm a_{n-1}y_n\right)=
 \sum\nolimits_{n=m}^\infty ra_m\dotsm a_{n-1}(x_n-a_nx_{n+1}) \\ =
 \sum\nolimits_{n=m}^\infty ra_m\dotsm a_{n-1}x_n-
 \sum\nolimits_{n=m}^\infty ra_m\dotsm a_nx_{n+1} = rx_m.
\end{multline*}

 ``Only if'': assume that $rx_m\in\im f$; so there exists an element
$z=\sum_{n=1}^\infty b_ny_n\in\R[[y_1,y_2,y_3,\dotsc]]$ such that
$f(z)=rx_m$.
 Since $z\in\R[[y_1,y_2,y_3,\dotsc]]$, the sequence of elements
$b_n\in\R$ converges to zero as $n\to\infty$.
 On the other hand, the equation $f(z)=rx_m$ means that
$b_1=\delta_{m,1}r$ and $b_{n+1}-b_na_n=\delta_{m,n+1}r$ for $n\ge1$,
where $\delta_{i,j}$~is the Kronecker delta symbol.
 Hence $b_1=\dotsb=b_{m-1}=0$ and $b_m=r$, $b_{m+1}=ra_m$,
$b_{m+2}=ra_ma_{m+1}$,~\dots, $b_n=ra_m\dotsm a_{n-1}$ for $n\ge m$.
\end{proof}

 The following proposition, which is the main result of this section,
is the contramodule version of~\cite[Lemma~1.3]{Bas}
(see also~\cite[Lemma~28.2]{AF}).

\begin{prop} \label{bass-projective-implies-coperfect}
 Let\/ $\R$ be a complete, separated topological ring with a base of
neighborhoods of zero formed by open right ideals, and
let $a_1$, $a_2$, $a_3$,~\dots\ be a sequence of elements in\/~$\R$.
 Assume that the related Bass flat left\/ $\R$\+contramodule\/ $\B$
is a projective left\/ $\R$\+contramodule.
 Then for any open right ideal\/ $\I\subset\R$ the chain of right
ideals\/ $\R\supset a_1\R+\I\supset a_1a_2\R+\I\supset
a_1a_2a_3\R+\I\supset\dotsb$ terminates.
\end{prop}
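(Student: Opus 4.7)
The approach I would take is to adapt the classical argument of Bass~\cite[Lemma~1.3]{Bas} (see also~\cite[Lemma~28.2]{AF}) to the contramodule setting, using the contratensor product functor $\R/\I\ocn_\R{-}$ to transform the $\I$-refined chain condition into one equation in a direct limit of copies of $\R/\I$. Since $\B$ is projective, the canonical epimorphism $\pi\colon\R[[x_1,x_2,x_3,\dotsc]]\twoheadrightarrow\B$ appearing in the proof of Lemma~\ref{bass-contramodule-vanishes} (which sends $x_n$ to the image $b_n\in\B$ of $1\in\R$ from the $n$-th term of the direct system, so that $b_n=a_n b_{n+1}$ and hence $b_n=a_n a_{n+1}\dotsm a_{m-1}b_m$ for every $m>n$) admits a contramodule section~$s$. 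Writing $s(b_n)=\sum_{k\ge1}r^{(n)}_k x_k$, the definition of $\R[[x_1,x_2,\dotsc]]$ forces $r^{(n)}_k\to0$ in~$\R$ as $k\to\infty$ for each fixed~$n$. Applying $s$ to the identity $b_n-a_n b_{n+1}=0$ produces the crucial recursion $r^{(n)}_k=a_n r^{(n+1)}_k$ in~$\R$, and iteration gives $r^{(1)}_k=a_1 a_2\dotsm a_{n-1}r^{(n)}_k$ for all $n\ge 1$ and $k\ge 1$.

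Next I would reduce modulo~$\I$ by applying $\R/\I\ocn_\R{-}$. Using that this functor preserves colimits, that $\R/\I\ocn_\R\R[[X]]=(\R/\I)[X]$, and that $\R/\I\ocn_\R\R=\R/\I$, the section~$s$ induces a section $s_\I\colon\R/\I\ocn_\R\B\hookrightarrow(\R/\I)[x_1,x_2,\dotsc]$ of the natural projection, while $\R/\I\ocn_\R\B$ is identified with the direct limit $\varinjlim\bigl(\R/\I\overset{*a_1}\rarrow\R/\I\overset{*a_2}\rarrow\dotsb\bigr)$, in which $\bar b_n$ (the image of~$b_n$) corresponds to the class of $\bar1$ from the $n$-th copy. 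Finite support of $s_\I(\bar b_1)$ in the direct sum, combined with $r^{(1)}_k\to 0$ in~$\R$ and openness of~$\I$, yields some $N$ with $r^{(1)}_k\in\I$ for every $k>N$; the identity $\pi_\I\circ s_\I=\id$ then forces $\bar b_1=\sum_{k=1}^N \bar r^{(1)}_k\bar b_k$ in $\R/\I\ocn_\R\B$.

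Finally, for any $L>N$, substituting $\bar b_1=a_1\dotsm a_{L-1}\bar b_L$ and $\bar b_k=a_k\dotsm a_{L-1}\bar b_L$ into the last identity gives $\bigl(a_1\dotsm a_{L-1}-\sum_{k=1}^N r^{(1)}_k a_k\dotsm a_{L-1}\bigr)\bar b_L=0$ in the direct limit. The standard description of when an element of $\R/\I$ dies there supplies some $L'>L$ for which $a_1\dotsm a_{L'-1}\equiv\sum_{k=1}^N r^{(1)}_k a_k\dotsm a_{L'-1}\pmod\I$. Rewriting each summand on the right as $a_1\dotsm a_n\cdot\bigl(r^{(n+1)}_k a_k\dotsm a_{L'-1}\bigr)$ via the iterated recursion from the first paragraph shows it lies in $a_1\dotsm a_n\R$ for every~$n$, so $a_1\dotsm a_{L'-1}\in a_1\dotsm a_n\R+\I$ for all $n\ge L'-1$; together with the obvious reverse inclusion, this proves that the descending chain $a_1\R+\I\supset a_1a_2\R+\I\supset\dotsb$ stabilizes from stage~$L'-1$ onwards. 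The step requiring the most care is the reduction modulo~$\I$: one has to check that the identifications $\R/\I\ocn_\R\B=\varinjlim(\R/\I)$ and $\R/\I\ocn_\R\R[[X]]=(\R/\I)[X]$ together with the induced map $s_\I$ interact coherently on the generators $b_n$ and $x_k$, but I foresee no genuine obstacle, since the remainder is essentially a translation of Bass's classical argument.
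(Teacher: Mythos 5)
Your proposal is correct, and its skeleton coincides with the paper's proof: split the epimorphism $\R[[x_1,x_2,\dotsc]]\twoheadrightarrow\B$, apply $\R/\I\ocn_\R{-}$, use the finite support of the reduced section's value on $\tilde z_1$, and transfer the resulting identity into the direct limit $\varinjlim(\R/\I\to\R/\I\to\dotsb)$ to find the stage where the chain stabilizes. Where you genuinely diverge is in the key divisibility step, namely showing that the coefficients $r_k$ of $\sigma(\tilde z_1)$ lie in $a_1\dotsm a_n\R$ (modulo~$\I$) for every~$n$. The paper obtains only congruences $r_i\equiv a_1\dotsm a_n t_i\bmod\I$, and does so by invoking the naturality of the contratensored section $\CT(\sigma)$ with respect to the morphisms of discrete right modules $s*\:\R/\J\rarrow\R/\I$, which forces the introduction of an auxiliary open right ideal $\J$ with $a_1\dotsm a_n\J\subset\I$ and two commutative squares. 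You instead work \emph{before} any reduction: since the forgetful functor $\R\contra\rarrow\R\modl$ makes every contramodule morphism a morphism of left $\R$\+modules, and since $b_n=a_nb_{n+1}$ in the underlying module structure of $\B$ while the left action of $a$ on $\R[[X]]$ multiplies coefficients, $\sum r_xx\mapsto\sum ar_xx$, the relation $s(b_n)=a_ns(b_{n+1})$ yields the exact coefficientwise recursion $r^{(1)}_k=a_1\dotsm a_{n-1}r^{(n)}_k$ in $\R$, i.e.\ $r^{(1)}_k\in\bigcap_n a_1\dotsm a_n\R$. This is a real simplification: it eliminates the auxiliary ideal $\J$ and the naturality argument entirely, and gives a slightly stronger intermediate statement than the paper's congruence. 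The remaining manipulations (finite support after reduction, killing an element of the colimit at a finite stage, and the final inclusion $a_1\dotsm a_{L'-1}\in a_1\dotsm a_n\R+\I$ giving stabilization) are carried out exactly as in the paper and are sound.
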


\begin{proof}
 In the notation of the previous proof, set
$\P=\R[[x_1,x_2,x_2,\dotsc]]$.
 Then we have a surjective left $\R$\+contramodule morphism
$\rho\:\P\rarrow\B$.
 Assuming that $\B$ is a projective left $\R$\+contramodule,
$\rho$~has a section, i.~e., there exists a left $\R$\+contramodule
morphism $\sigma\:\B\rarrow\P$ such that $\rho\sigma=\id_\B$.

 Following~\cite[Proposition~4.2(b)]{Pcoun}, we assign to every
left $\R$\+contramodule $\C$ the functor of contratensor product
$\CT(\C)={-}\ocn_\R\C$ acting from the category of cyclic discrete
right $\R$\+modules to the category of abelian groups.
 So we have $\CT(\C)(\R/\I)=\R/\I\ocn_\R\C=\C/\I\tim\C$ for any
open right ideal $\I\subset\R$.
 In particular, $\CT(\P)(\R/\I)=(\R/\I)[x_1,x_2,x_3,\dots]$ is
naturally the direct sum of a countable set of copies of
the abelian group~$\R/\I$.
 We will denote formally the elements of the group $\CT(\P)(\R/\I)$
by expressions like $\sum_{m=1}^n r_mx_m$, where $r_m\in\R/\I$.
 Here the sum is a direct sum, of course.

 Similarly, the abelian group $\CT(\B)(\R/\I)$ can be computed
as the direct limit
$$
 \R/\I\ocn_\R\B=
 \varinjlim\,(\R/\I\overset{*a_1}\lrarrow\R/\I\overset{*a_2}\lrarrow
 \R/\I\overset{*a_3}\lrarrow\dotsb)
$$
of the sequence of the right $\R$\+modules $\R/\I$ and the maps of
right multiplication by the elements~$a_n$ acting between them.
 We will denote the elements of the groups $\R/\I$ in this sequence
by $rz_n$, where $r\in\R/\I$ and $z_n$~is a formal symbol; so
the transition map takes $rz_n$ to $ra_nz_{n+1}$.
 Let $r\tilde z_n\in\CT(\B)(\R/\I)$ be the notation for the images of
the elements~$rz_n$ in the direct limit.
 Then the equations $r\tilde z_n=ra_n\tilde z_{n+1}$ hold in
the abelian group $\CT(\B)(\R/\I)$ for all $n\ge1$ and $r\in\R/\I$.

 Applying the functor $\CT$ to the morphisms $\rho$ and~$\sigma$,
we obtain the abelian group homomorphism $\rho_\I\:\CT(\P)(\R/\I)
\rarrow\CT(\B)(\R/\I)$ taking $r_nx_n$ to $r_n\tilde z_n$, and its
section $\sigma_\I\:\CT(\B)(\R/\I)\rarrow\CT(\P)(\R/\I)$ (so
$\rho_\I\sigma_\I=\id$).
 Furthermore, for any two open right ideals $\I$ and $\J\subset\R$
and an element $s\in\R$ such that $s\J\subset\I$, there is
a morphism of discrete right $\R$\+modules
$\R/\J\overset{s*}\rarrow\R/\I$ taking a coset $t+\J\in\R/\J$,
\,$t\in\R$ to the coset $st+\I\in\R/\I$.
 Applying the functor $\CT(\P)$ to the morphism~$s*$ produces
the abelian group homomorphism $\CT(\P)(s*)\:\CT(\P)(\R/\J)
\rarrow\CT(\P)(\R/\I)$ taking an element $\sum_{m=1}^nt_mx_m$
to the element $\sum_{m=1}^n st_mx_m$, where $t_m\in\R/\J$ and
$st_m\in\R/\I$.
 Similarly, applying the functor $\CT(\B)$ to the morphism~$s*$ we
get the abelian group homomorphism $\CT(\B)(s*)\:\CT(\B)(\R/\J)
\rarrow\CT(\B)(\R/\I)$ taking $t\tilde z_n$ to $st\tilde z_n$
for all $t\in\R/\J$.
 Since $\CT(\rho)$ and $\CT(\sigma)$ are morphisms of functors,
the maps $\CT(\P)(s*)$ and $\CT(\B)(s*)$ form a commutative
square diagram with the maps $\rho_\I$ and~$\rho_\J$, and another
commutative square diagram with the maps $\sigma_\I$ and~$\sigma_\J$.

 Consider the element $\tilde z_1\in\CT(\B)(\R/\I)$,
where $z_1=1z_1\in\R/\I$ is the coset of the element $1\in\R$ and
$\tilde z_1\in\CT(\B)(\R/\I)$ is the image of~$z_1$ in the direct limit.
 Then the element $\sigma_\I(\tilde z_1)\in\CT(\P)(\R/\I)$ can be
presented in the form $\sigma_\I(\tilde z_1)=\sum_{i=1}^k r_ix_i$
with some integer $k\ge1$ and elements $r_i\in\R/\I$.
 So we have
\begin{multline*}
 a_1\dotsm a_{k-1}\tilde z_k = \tilde z_1=\rho_\I\sigma_\I(\tilde z_1)=
 r_1\tilde z_1+\dotsb+r_k\tilde z_k = \\
 (r_1a_1\dotsm a_{k-1}+r_2a_2\dotsm a_{k-1}+\dotsb+r_{k-1}a_{k-1}
 + r_k)\tilde z_k\,\in\,\CT(\B)(\R/\I).
\end{multline*}
 It follows that there exists an integer $m\ge k$ such that
the equation
$$
 a_1\dotsm a_{m-1}z_m = (r_1a_1\dotsm a_{m-1} + r_2a_2\dotsm a_{m-1}
 + \dotsb + r_ka_k\dotsm a_{m-1})z_m
$$
holds in the group $\R/\I=(\R/\I)z_m$. 
 Hence for every $n\ge m$ we have
$$
 a_1\dotsm a_{n-1} \equiv r_1a_1\dotsm a_{n-1} + r_2a_2\dotsm a_{n-1}
 + \dotsb + r_ka_k\dotsm a_{n-1} \mod \I.
$$

 Set $s=a_1\dotsm a_n\in\R$, and choose an open right ideal
$\J\subset\R$ such that $s\J\subset\I$.
 Consider the element $\tilde z_{n+1}\in\CT(\B)(\R/\J)$,
where $z_{n+1}=1z_{n+1}\in\R/\J$ is the coset of the element $1\in\R$.
 Then the element $\sigma_\J(\tilde z_{n+1})\in\CT(\P)(\R/\J)$ can be
presented in the form $\sigma_\J(\tilde z_{n+1})=\sum_{i=1}^l t_ix_i$
with some integer $l\ge k$ and elements $t_i\in\R/\J$.
 By commutativity of the square diagram formed by the maps
$\CT(\B)(s*)$ and $\CT(\P)(s*)$ together with the maps $\sigma_\J$
and~$\sigma_\I$, we have
$$
 \sigma_\I(s\tilde z_{n+1})=\sum\nolimits_{i=1}^l st_ix_i
 \,\in\,\CT(\P)(\R/\I).
$$
 On the other hand, 
$s\tilde z_{n+1}=a_1\dotsm a_n\tilde z_{n+1} = \tilde z_1
\,\in\,\CT(\B)(\R/\I)$, hence
$$
 \sigma_\I(s\tilde z_{n+1})=\sigma_\I(\tilde z_1)=
 \sum\nolimits_{i=1}^k r_ix_i \,\in\,\CT(\P)(\R/\I).
$$
 Since the summation signs in these expressions stand for a direct sum,
we can conclude that $r_i\equiv st_i\bmod\I$
for every $1\le i\le k$.

 Finally, we have
\begin{align*}
 a_1\dotsm a_{n-1} &\equiv r_1a_1\dotsm a_{n-1} + r_2a_2\dotsm a_{n-1}
 + \dotsb + r_ka_k\dotsm a_{n-1} \\
 &\equiv st_1a_1\dotsm a_{n-1} + st_2a_2\dotsm a_{n-1}
 + \dotsb + st_ka_k\dotsm a_{n-1} \\
 &\in\, s\R=a_1\dotsm a_n\R \mod\I.
\end{align*}
 Thus $a_1\dotsm a_{n-1}\in a_1\dotsm a_n\R+\I$, and it follows that
$a_1\dotsm a_{n-1}\R+\I=a_1\dotsm a_n\R+\I$ for all $n\ge m$,
as desired.
\end{proof}

\begin{lem} \label{bass-projective-reduction}
 Let\/ $\R$ be a complete, separated topological ring with a base of
neighborhoods of zero formed by open right ideals, $\J$ be a strongly
closed two-sided ideal in\/ $\R$, and $\T=\R/\J$ be the topological
quotient ring.
 Assume that all Bass flat left\/ $\R$\+contramodules are projective.
 Then all Bass flat left\/ $\T$\+contramodules are projective, too.
\end{lem}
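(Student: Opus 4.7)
The plan is to exploit the contraextension-of-scalars functor $p^\sharp\:\R\contra\rarrow\T\contra$ associated with the natural projection $p\:\R\rarrow\T$, and show that it carries the Bass flat left $\R$\+contramodule associated with a sequence $(a_n)_{n\ge1}$ in $\R$ to the Bass flat left $\T$\+contramodule associated with the image sequence $(p(a_n))_{n\ge1}$ in~$\T$, while preserving projectivity. Strong closure of $\J$ will supply the two ingredients needed: surjectivity of the map $p\:\R\rarrow\T$ (apply surjectivity of $\R[[X]]\rarrow\T[[X]]$ with $X$ a one-element set) and exactness of the contrarestriction-of-scalars functor $p_\sharp\:\T\contra\rarrow\R\contra$ (as recorded in Section~\ref{prelim-strongly-closed-ideals}). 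The latter exactness, together with the adjunction $p^\sharp\dashv p_\sharp$, guarantees that $p^\sharp$ sends projectives to projectives.

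Starting from an arbitrary sequence $\bar a_1$, $\bar a_2$, $\bar a_3$,~\dots\ in $\T$, I would first use surjectivity of~$p$ to pick a lift $a_1$, $a_2$, $a_3$,~\dots\ in $\R$ with $p(a_n)=\bar a_n$. Let $\B$ be the Bass flat left $\R$\+contramodule associated with $(a_n)$ and $\bar\B$ be the Bass flat left $\T$\+contramodule associated with $(\bar a_n)$. By hypothesis, $\B$ is projective in~$\R\contra$.

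The heart of the argument is the identification $p^\sharp(\B)\cong\bar\B$. The functor $p^\sharp$ preserves colimits (being a left adjoint), it sends free contramodules by the rule $p^\sharp(\R[[X]])=\T[[X]]$, and in particular $p^\sharp(\R)=\T$. The morphism $*a_n\:\R\rarrow\R$ is the unique $\R$\+contramodule endomorphism of the free contramodule on one generator sending $1$ to~$a_n$ (under $\Hom^\R(\R,\R)\cong\R$). Applying the description of $p^\sharp$ on morphisms between free contramodules, $p^\sharp(*a_n)\:\T\rarrow\T$ is the $\T$\+contramodule morphism sending $1\in\T$ to $p(a_n)=\bar a_n$, which is $*\bar a_n$. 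Thus $p^\sharp$ carries the telescope defining $\B$ to the telescope defining $\bar\B$, yielding $p^\sharp(\B)\cong\bar\B$. Since $p^\sharp$ preserves projectives, the $\T$\+contramodule $\bar\B$ is projective, as desired.

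The one step deserving careful checking is the naturality computation $p^\sharp(*a_n)=*\bar a_n$; this is really the only non-formal point, and it rests on the construction of $p^\sharp$ as the unique right-exact extension of the assignment $\R[[X]]\mapsto\T[[X]]$ on free contramodules, together with the universal property $\Hom^\R(\R[[X]],\C)\cong\Hom_\Sets(X,\C)$ that pins down morphisms out of free contramodules. Once this compatibility is in place, the rest of the proof is purely formal from the adjunction.
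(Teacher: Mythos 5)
Your proof is correct and coincides in essence with the paper's: the paper also lifts the sequence $(\bar a_n)$ to $(a_n)$ in $\R$, forms the Bass contramodule $\B$, and identifies $\overline\B$ with $\B/\J\tim\B=p^\sharp(\B)$ using the fact that the reduction (contraextension-of-scalars) functor preserves colimits, whence projectivity descends. Your extra care about $p^\sharp(*a_n)=*\bar a_n$ just makes explicit what the paper subsumes under ``the reduction functors preserve colimits.''
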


\begin{proof}
 Let $\bar a_1$, $\bar a_2$, $\bar a_3$,~\dots\ be a sequence of
elements in $\T$ and $\overline\B=\varinjlim\,(\T\overset{*\bar a_1}
\rarrow\T\overset{*\bar a_2}\rarrow\T\overset{*\bar a_3}
\rarrow\dotsb)$ be the related Bass flat left $\T$\+contramodule.
 Lift the elements $\bar a_n\in\T$ to some elements $a_n\in\R$,
and consider the related Bass flat left $\R$\+contramodule~$\B$.
 Then we have $\B/\J\tim\B\cong\overline\B$, since the reduction
functors preserve colimits (see
Sections~\ref{prelim-change-of-scalars}\+-%
\ref{prelim-strongly-closed-ideals}).
 The left $\R$\+contramodule $\B$ is projective by assumption, hence
the left $\T$\+contramodule $\B/\J\tim\B$ is projective, too.
\end{proof}

\begin{cor} \label{discrete-quotients-left-perfect-if-bass-projective}
 Let\/ $\R$ be a complete, separated topological ring with a base of
neighborhoods of zero formed by open right ideals.
 Assume that all Bass flat left\/ $\R$\+contramodules are projective.
 Then the discrete ring\/ $R=\R/\I$ is left perfect for every open
two-sided ideal\/ $\I\subset\R$.
\end{cor}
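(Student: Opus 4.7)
The plan is to compose the two main results already established in this section. Let $\I\subset\R$ be an open two-sided ideal, and set $R=\R/\I$ with its quotient topology (which is discrete, since $\I$ is open).

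First I would verify that $\I$ is a strongly closed two-sided ideal in $\R$. This is immediate: any open subgroup of a complete, separated topological abelian group is strongly closed, since the quotient is discrete (hence automatically complete) and the map $\R[[X]]\to(\R/\I)[X]$ obtained by reducing coefficients coordinate-wise is patently surjective for every set $X$. So the hypothesis of Lemma~\ref{bass-projective-reduction} is met with $\J=\I$ and $\T=R$.

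Applying Lemma~\ref{bass-projective-reduction}, the assumption that all Bass flat left $\R$\+contramodules are projective transfers: every Bass flat left $R$\+contramodule is projective. Since $R$ is discrete, the category $R\contra$ coincides with the category $R\modl$ (cf.\ the discussion in Section~\ref{prelim-contramodules}), and the contramodule Bass construction reduces to the classical one: for any sequence $\bar a_1,\bar a_2,\ldots\in R$, the left $R$\+contramodule $\varinjlim(R\overset{*\bar a_1}\to R\overset{*\bar a_2}\to\cdots)$ is exactly the Bass flat left $R$\+module of~\cite{Bas}. Hence every Bass flat left $R$\+module is projective.

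Finally I would invoke Lemma~\ref{bass-modules-projective-implies-perfect} to conclude that $R$ is left perfect, as desired. There is no real obstacle here; the whole argument is a two-line composition once one notes the (trivial but essential) fact that open two-sided ideals are automatically strongly closed and that discrete-ring contramodules are ordinary modules.
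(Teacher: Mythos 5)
Your argument is correct and is essentially identical to the paper's own proof, which likewise reduces to Lemma~\ref{bass-modules-projective-implies-perfect} via Lemma~\ref{bass-projective-reduction} applied to the (automatically strongly closed) open two-sided ideal $\I$. You merely spell out the two facts the paper leaves implicit — that open ideals are strongly closed and that contramodules over a discrete ring are ordinary modules — both of which are already recorded earlier in the text.
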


\begin{proof}
 In view of Lemma~\ref{bass-modules-projective-implies-perfect},
it suffices to show that all Bass flat left $R$\+modules are projective.
 In our assumptions, this is a particular case of
Lemma~\ref{bass-projective-reduction}.
\end{proof}

\begin{lem} \label{bass-covers-reduction}
 Let\/ $\R$ be a complete, separated topological ring with a base of
neighborhoods of zero formed by open right ideals, $\J$ be a strongly
closed two-sided ideal in\/ $\R$, and $\T=\R/\J$ be the topological
quotient ring.
 Assume that all Bass flat left\/ $\R$\+contramodules have projective
covers in\/ $\R\contra$.
 Then all Bass flat left\/ $\T$\+contramodules have projective covers
in\/ $\T\contra$.
\end{lem}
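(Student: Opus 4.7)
The plan is to reduce this to Lemma \ref{cover-reduction-lem} in exactly the same way that Lemma \ref{bass-projective-reduction} reduces the question of projectivity to the ambient ring. The key observation is that the Bass flat contramodule construction is compatible with contraextension of scalars, so every Bass flat $\T$-contramodule is a reduction of a Bass flat $\R$-contramodule.

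First I would fix a sequence $\bar a_1,\bar a_2,\bar a_3,\dotsc$ of elements of $\T$ and let $\overline\B$ be the associated Bass flat left $\T$-contramodule. Since $\J$ is strongly closed, the continuous ring homomorphism $p\:\R\rarrow\T$ is surjective, so I can lift each $\bar a_n$ to some $a_n\in\R$ and form the associated Bass flat left $\R$-contramodule $\B=\varinjlim(\R\overset{*a_1}\rarrow\R\overset{*a_2}\rarrow\dotsb)$. The contraextension functor $p^\sharp\:\R\contra\rarrow\T\contra$ is a left adjoint, hence preserves colimits, and it takes the free rank-one $\R$-contramodule $\R$ to the free rank-one $\T$-contramodule $\T$ and the morphism $*a_n$ to $*\bar a_n$; thus as in the proof of Lemma \ref{bass-projective-reduction},
$$
 \B/\J\tim\B\;\cong\;p^\sharp(\B)\;\cong\;\overline\B.
$$

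Next I would apply the hypothesis of the lemma to obtain a projective cover $p\:\P\rarrow\B$ in $\R\contra$. By Lemma \ref{cover-reduction-lem}, the induced morphism
$$
 \bar p\:\P/\J\tim\P\lrarrow\B/\J\tim\B
$$
is a projective cover of $\B/\J\tim\B$ in $\T\contra$. Composing with the isomorphism $\B/\J\tim\B\cong\overline\B$, I obtain a projective cover of $\overline\B$ in $\T\contra$, as desired.

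Since this construction works for an arbitrary sequence in $\T$, every Bass flat left $\T$-contramodule has a projective cover. There is no real obstacle here: once strong closedness of $\J$ is in place, it guarantees both the lifting of sequences and the fact that $\T$ really is $\R/\J$ (rather than its completion), while Lemma \ref{cover-reduction-lem} provides the descent of projective covers along strongly closed quotient maps. The only point that requires a moment's thought is the compatibility of the Bass-contramodule construction with $p^\sharp$, and this follows formally from $p^\sharp$ being a left adjoint that sends $\R$ to $\T$.
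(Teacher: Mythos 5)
Your proposal is correct and follows exactly the paper's own argument: lift the sequence from $\T$ to $\R$, form the corresponding Bass flat $\R$\+contramodule $\B$, use the isomorphism $\B/\J\tim\B\cong\overline\B$ (reduction functors preserve colimits), and then apply Lemma~\ref{cover-reduction-lem} to descend the projective cover of $\B$ to one of $\overline\B$. No differences worth noting.
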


\begin{proof}
 As in the proof of Lemma~\ref{bass-projective-reduction},
let $\bar a_1$, $\bar a_2$, $\bar a_3$,~\dots\ be a sequence of
elements in $\T$ and $\overline\B$ be the related Bass flat
left $\T$\+contramodule.
 Lift the elements $\bar a_n\in\T$ to some elements $a_n\in\R$,
and consider the related Bass flat left $\R$\+contramodule~$\B$.
 Then we have $\B/\J\tim\B\cong\overline\B$.
 By assumption, we know that the left $\R$\+contramodule $\B$ has
a projective cover in $\R\contra$; and by
Lemma~\ref{cover-reduction-lem} it follows that the left
$\T$\+contramodule $\overline\B$ has a projective cover in $\T\contra$.
\end{proof}


\begin{cor} \label{discrete-quotients-left-perfect-if-bass-covers}
 Let\/ $\R$ be a complete, separated topological ring with a base of
neighborhoods of zero formed by open right ideals.
 Assume that all Bass flat left\/ $\R$\+contramodules have projective
covers.
 Then the discrete ring\/ $R=\R/\I$ is left perfect for every open
two-sided ideal\/ $\I\subset\R$.
\end{cor}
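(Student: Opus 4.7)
The plan is to imitate the structure of the proof of Corollary~\ref{discrete-quotients-left-perfect-if-bass-projective}, replacing the reduction-of-projectivity step by the combination of a reduction-of-projective-covers step with the module-theoretic fact that a flat module with a projective cover is projective.

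First, I would observe that any open two-sided ideal $\I\subset\R$ is strongly closed as a subgroup of $\R$, since open subgroups of a complete separated topological abelian group are automatically strongly closed (remarked right after the definition in Section~\ref{prelim-strongly-closed-subgroups}). Hence Lemma~\ref{bass-covers-reduction} applies with $\J=\I$ and $\T=\R/\I=R$, and we conclude that every Bass flat left $R$\+contramodule has a projective cover in $R\contra$.

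Next, since the quotient ring $R=\R/\I$ is discrete (as $\I$ is open), the definition of a contramodule specializes to that of an ordinary module, so $R\contra=R\modl$, and Bass flat left $R$\+contramodules are exactly the Bass flat left $R$\+modules in the classical sense (see the remark after the definition at the start of Section~\ref{flat-contramodules-secn}). Every Bass flat $R$\+module is flat (Corollary~\ref{countable-colimits-of-projective-contramodules}, applied to the discrete ring), and any flat $R$\+module that admits a projective cover is projective by Proposition~\ref{projective-covers-of-flat-modules-prop}. Therefore all Bass flat left $R$\+modules are projective.

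Finally, Lemma~\ref{bass-modules-projective-implies-perfect} yields that $R$ is left perfect, completing the proof. There is no real obstacle here: the entire argument is a straightforward chaining of three previously established results (Lemma~\ref{bass-covers-reduction}, Proposition~\ref{projective-covers-of-flat-modules-prop}, and Lemma~\ref{bass-modules-projective-implies-perfect}), exactly parallel to Corollary~\ref{discrete-quotients-left-perfect-if-bass-projective}, with the only extra ingredient being the module-theoretic passage from ``has a projective cover'' to ``is projective'' that is available because we have reduced to a discrete (hence ordinary-ring) situation.
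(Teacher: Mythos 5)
Your proposal is correct and follows essentially the same route as the paper's own proof: reduce via Lemma~\ref{bass-covers-reduction} (open ideals being strongly closed) to the statement that Bass flat left $R$\+modules have projective covers, then combine Proposition~\ref{projective-covers-of-flat-modules-prop} with Lemma~\ref{bass-modules-projective-implies-perfect}. The paper even mentions as an aside the alternative contramodule-level argument via Proposition~\ref{projective-covers-of-flat-contramodules-prop}, but its primary proof is exactly your chain.
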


\begin{proof}
 In view of Lemma~\ref{bass-modules-projective-implies-perfect} and
Proposition~\ref{projective-covers-of-flat-modules-prop}, it suffices
to show that all Bass flat left $R$\+modules have projective covers.
 In our assumptions, this is a particular case of
Lemma~\ref{bass-covers-reduction}.
 (In the more special case of a topological ring $\R$ with a base of
neighborhoods of zero consisting of open two-sided ideals, one could,
alternatively, use
Corollary~\ref{countable-colimits-of-projective-contramodules},
Proposition~\ref{projective-covers-of-flat-contramodules-prop}, and
Corollary~\ref{discrete-quotients-left-perfect-if-bass-projective}.)
\end{proof}

\Section{Topologically T-Nilpotent Ideals}
\label{t-nilpotent-secn}

 Let $H$ be a separated topological ring without unit.
 We will say that $H$ is \emph{topologically nil} if for any element
$a\in H$ the sequence of elements $a$, $a^2$, $a^3$,~\dots\ converges
to zero in the topology of~$H$.
 Furthermore, we will say that $H$ is \emph{topologically left
T\+nilpotent} if for any sequence of elements $a_1$, $a_2$,
$a_3$,~\dots\ in $H$ the sequence of elements $a_1$, $a_1a_2$,
$a_1a_2a_3$,~\dots, $a_1a_2\dotsm a_n$,~\dots\ converges to zero in
the topology of~$H$.

 For examples of topologically left T\+nilpotent two-sided ideals in
topological rings we refer to Section~\ref{examples-secn} and
Examples~\ref{adeles-example}\+-\ref{backwards-morphisms-example}.

 The following discrete module version of Nakayama lemma is also
a topological version of the condition~(7) and the implication
(7)$\,\Longrightarrow\,$(1) in~\cite[Theorem~P]{Bas}.

\begin{lem} \label{discrete-module-Nakayama}
 Let\/ $\R$ be a complete, separated topological ring with a base of
neighborhoods of zero formed by open right ideals, and let\/
$H\subset\R$ be a left ideal.
 Then\/ $H$ is topologically left T\+nilpotent (as a topological ring
without unit in the topology induced from\/~$\R$) if and only if
for any nonzero discrete right\/ $\R$\+module\/ $\N$ the submodule\/
$\N_H\subset\N$ of all the elements annihilated by\/ $H$ in\/ $\N$
is also nonzero.
\end{lem}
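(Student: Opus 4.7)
The two directions are essentially independent.

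For the ``only if'' direction, assume $H$ is topologically left T\+nilpotent and let $\N$ be a nonzero discrete right $\R$\+module; I want to show $\N_H\neq 0$. Starting from any nonzero element $b_0\in\N$, I would attempt to build by dependent choice a sequence $a_1,a_2,\dotsc\in H$ such that $b_n := b_0a_1a_2\dotsm a_n\neq 0$ for every $n$. If this process terminates at some stage because $b_kH=0$, then $0\neq b_k\in\N_H$ and the proof is complete. Otherwise the whole sequence $(a_n)$ lies in $H$, and topological left T\+nilpotency forces the products $a_1\dotsm a_n$ to eventually enter the open right ideal $\Ann_\R(b_0)\subset\R$, which gives the contradiction $b_n=b_0(a_1\dotsm a_n)=0$.

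For the ``if'' direction, I argue by contrapositive: assuming $H$ is not topologically left T\+nilpotent, I construct a nonzero discrete right $\R$\+module on which $H$ acts without nontrivial annihilator. Pick a sequence $a_1,a_2,\dotsc\in H$ and an open right ideal $\I\subset\R$ with $a_1\dotsm a_n\notin\I$ for infinitely many $n$. Since $H$ is a left ideal it is closed under multiplication, so by grouping consecutive blocks of factors I may assume $a_1\dotsm a_n\notin\I$ for \emph{every} $n\geq 1$. Setting $\N=\R/\I$, define the transfinite $H$\+torsion filtration
\[
 \N_0=0,\qquad \N_{\alpha+1}=\{b\in\N\mid bH\subset\N_\alpha\},\qquad
 \N_\lambda=\bigcup\nolimits_{\alpha<\lambda}\N_\alpha
\]
(each $\N_\alpha$ is an $\R$\+submodule because $H$ is a left ideal). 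This chain stabilizes at some $\N_\infty\subset\N$, and by construction the discrete quotient $\overline{\N}=\N/\N_\infty$ satisfies $(\overline\N)_H=0$.

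It remains to show $\overline\N\neq 0$, i.e., $1+\I\notin\N_\infty$. The key point is a transfinite induction on $\alpha$ that simultaneously ranges over all $n\geq 0$, proving the strengthened assertion that $a_1\dotsm a_n+\I\notin\N_\alpha$ for every $n$ (with the empty product read as~$1$). The base $\alpha=0$ uses $a_1\dotsm a_n\notin\I$; the limit case is immediate. The successor step $\alpha=\beta+1$ is the one consuming the chosen sequence: if $a_1\dotsm a_n+\I$ lay in $\N_{\beta+1}$, then $(a_1\dotsm a_n)H\subset\N_\beta$, so the specific element $a_{n+1}\in H$ would yield $a_1\dotsm a_{n+1}+\I\in\N_\beta$, contradicting the inductive hypothesis at $(\beta,n+1)$. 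Taking $n=0$ then gives $1+\I\notin\N_\alpha$ for every~$\alpha$, which is what was needed.

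The anticipated obstacle is precisely this coupling between the ordinal parameter $\alpha$ and the index $n$ in the sequence: a naive induction that fixes $n$ in advance cannot descend through transfinite ordinals while consuming only finitely many factors $a_i$. Letting the induction hypothesis vary in $n$ as well resolves this, allowing one to ``trade'' one factor of the sequence for one decrement of the ordinal at each successor step.
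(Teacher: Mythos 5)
Your proof is correct and follows essentially the same route as the paper's: the ``only if'' direction is the identical dependent-choice argument against the open annihilator of a chosen element, and the ``if'' direction uses the same transfinite $H$\+socle filtration of $\R/\I$, merely phrased contrapositively (you exhibit a nonzero quotient with zero $H$\+socle, where the paper descends through the filtration via a decreasing chain of ordinals to force $a_1\dotsm a_n\in\I$). The block-regrouping step, needed only because of your contrapositive setup, is valid since $H$ is a left ideal, and your $(\alpha,n)$\+induction is the mirror image of the paper's descending ordinal chain.
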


\begin{proof}
 ``Only if'': suppose that $H$ is topologically left T\+nilpotent,
and let $\N$ be a nonzero discrete right $\R$\+module.
 We have to show that $\N$ contains a nonzero element annihilated
by~$H$.
 This implication does not depend on the assumption that $H$ is a left
ideal in $\R$; so $H\subset\R$ can be an arbitrary subring without unit.
 Choose an arbitrary nonzero element $x\in\N$.
 Suppose $x$ is not annihilated by $H$; then there exists an element
$a_1\in H$ such that $xa_1\ne0$ in~$\N$.
 Suppose $xa_1$ is not annihilated by $H$; then there exists
an element $a_2\in H$ such that $xa_1a_2\ne0$ in $\N$, etc.
 Assuming that $\N_H=0$, we can proceed indefinitely in this way
and construct a sequence of elements $(a_n\in H)_{n\ge1}$ such that
$xa_1\dotsm a_n\ne0$ in $\N$ for all $n\ge1$.

 Let $\I\subset\R$ be the annihilator of~$x$; then $\I$ is an open
right ideal in $\R$, so $\I\cap H$ is a neighborhood of zero in~$H$.
 Since $H$ is topologically left T\+nilpotent, there exists $n\ge1$
such that $a_1a_2\dotsm a_n\in\I\cap H$.
 Hence $xa_1\dotsm a_n=0$ in~$\N$.
 The contradiction proves that $\N_H\ne0$.

 ``If'': suppose that $\N_H\ne0$ for every nonzero discrete
right $\R$\+module~$\N$.
 Since $H\subset\R$ is a left ideal, $\N_H$ is an $\R$\+submodule
in~$\N$.
 Whenever $\N_H\ne\N$, one then also has $(\N/\N_H)_H\ne0$.
 Proceeding in a transfinite induction, one constructs a filtration
$0=F_0\N\subset F_1\N\subset F_2\N\subset\dotsb\subset F_\alpha\N=\N$
of the $\R$\+module $\N$ by its $\R$\+submodules, indexed by some
ordinal~$\alpha$, such that $F_{i+1}\N/F_i\N=(\N/F_i\N)_H\ne0$ for
all ordinals $i<\alpha$ and $F_j\N=\bigcup_{i<j}F_i\N$ for all
limit ordinals $j\le\alpha$.

 Now let $(a_n\in H)_{n\ge1}$ be a sequence of elements.
 In order to prove that $H$ is topologically left T\+nilpotent, we have
to show that, for every open right ideal $\I\subset\R$, there exists
$n\ge1$ such that $a_1\dotsm a_n\in\I\cap H$.
 Consider the discrete right $\R$\+module $\N=\R/\I$ and its filtration
$(F_i\N)_{i=0}^\alpha$, as constructed above.
 Let $x\in\N$ denote the image of the element $1\in\R$.

 We follow the argument in the proof of (7)$\,\Longrightarrow\,$(1)
in~\cite[Theorem~P]{Bas}.
 Let $i_0\le\alpha$ be the minimal ordinal such that $x\in F_{i_0}\N$.
 Then $i_0$~cannot be a limit ordinal; so either $i_0=0$, or
$i_0=i'_0+1$ for some ordinal~$i'_0$.
 Since the $\R$\+module $F_{i_0}\N/F_{i'_0}\N$ is annihilated by~$H$,
we have $xa_1\in F_{i'_0}\N$.
 Let $i_1$ be the minimal ordinal such that $xa_1\in F_{i_1}\N$;
then $i_1<i_0$.
 Once again, $i_1$~cannot be a limit ordinal; so either $i_1=0$, or
$i_1=i'_1+1$ for some ordinal~$i'_1$, and then $xa_1a_2\in F_{i'_1}\N$.
 Proceeding in this way, we construct a decreasing chain of
ordinals $i_0>i_1>i_2>\dotsb$, which must terminate.
 Thus there exists $n\ge1$ such that $xa_1\dotsm a_n\in F_0\N=0$,
hence $a_1\dotsm a_n\in\I$.
\end{proof}

 The following version of contramodule Nakayama lemma is
a generalization of~\cite[Lemma~1.3.1]{Pweak} (which is, in turn,
a generalization of~\cite[Lemma~A.2.1]{Psemi}).
 For other versions of contramodule Nakayama lemma,
see~\cite[Lemma~D.1.2]{Pcosh} and~\cite[Lemma~6.14]{PR}.
 For a module version,
see~\cite[Lemma~28.3\,(a)$\,\Longleftrightarrow\,$(b)]{AF}.

\begin{lem} \label{contramodule-Nakayama}
 Let\/ $\R$ be a complete, separated topological ring with a base of
neighborhoods of zero formed by open right ideals, and let\/
$\fH\subset\R$ be a closed left ideal.
 Then\/ $\fH$ is topologically left T\+nilpotent (in the topology
induced from\/~$\R$) if and only if for any nonzero
left\/ $\R$\+contramodule\/ $\C$ one has\/ $\fH\tim\C\ne\C$.
\end{lem}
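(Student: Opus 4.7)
The plan is to prove the two implications separately. The ``if'' direction will use the Bass flat contramodule construction from the previous section, while the ``only if'' direction requires a K\"onig's lemma argument on a tree of iterated expansions.

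For the ``if'' direction, suppose $\fH\tim\C\ne\C$ for every nonzero left\/ $\R$\+contramodule~$\C$. Given any sequence $a_1,a_2,a_3,\dotsc\in\fH$, we form the associated Bass flat left\/ $\R$\+contramodule~$\B$ with its canonical generators $x_n$ satisfying $x_n=a_n x_{n+1}$. Since each $a_n\in\fH$, we have $x_n\in\fH\cdot\B\subset\fH\tim\B$. The $x_n$ generate $\B$ as a contramodule via the defining surjection $\R[[x_1,x_2,\dotsc]]\twoheadrightarrow\B$, and $\fH\tim\B\subset\B$ is a subcontramodule because $\fH$ is a closed left ideal (cf.\ Section~\ref{prelim-reductions}); hence $\fH\tim\B=\B$. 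The hypothesis forces $\B=0$, and Lemma~\ref{bass-contramodule-vanishes} (applied with $m=1$) then gives $a_1a_2\dotsm a_n\to 0$, as required.

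For the ``only if'' direction, assume $\fH$ is topologically left T\+nilpotent and suppose, for contradiction, that a nonzero left\/ $\R$\+contramodule $\C$ satisfies $\fH\tim\C=\C$. Choose $c_0\in\C\setminus\{0\}$ and, using the axiom of choice, a set-theoretic section $\alpha\:\C\to\fH[[\C]]$ of $\pi_\C|_{\fH[[\C]]}$, writing $\alpha(c)=\sum_{d\in\C}\alpha_{c,d}\,d$ with $\alpha_{c,d}\in\fH$ and arranging $\alpha(0)=0$. For each open right ideal $\I\subset\R$, let $T_\I$ be the tree whose level-$n$ vertices are chains $(c_0,d_1,\dotsc,d_n)$ with $\alpha_{d_{i-1},d_i}\neq 0$ for all $i$ (where $d_0=c_0$) and with the product $\alpha_{c_0,d_1}\alpha_{d_1,d_2}\dotsm\alpha_{d_{n-1},d_n}$ lying outside~$\I$. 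A first key point is that $T_\I$ is finitely branching: at a vertex with product $\beta\in\R$, condition~(ii) of Section~\ref{prelim-topol-rings} supplies an open right ideal $\J\subset\R$ with $\beta\J\subset\I$, and since $\alpha(d_n)\in\fH[[\C]]$, only finitely many coefficients $\alpha_{d_n,d_{n+1}}$ lie outside~$\J$, so only finitely many children can keep the product outside~$\I$. An infinite branch of $T_\I$ would produce a sequence $a_i=\alpha_{d_{i-1},d_i}\in\fH$ with $a_1a_2\dotsm a_n\notin\I$ for all $n$, directly contradicting topological left T\+nilpotence of~$\fH$; hence by K\"onig's lemma every $T_\I$ is finite.

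It remains to extract the contradiction $c_0=0$. Iterating $\alpha$ and applying contraassociativity produces, for each $n$, an element $\Xi_n\in\R[[\C]]$ with $\pi_\C(\Xi_n)=c_0$ whose coefficient at $d\in\C$ is the convergent sum of $n$-fold products along length-$n$ chains of the full expansion tree ending at~$d$. Since every chain of length exceeding the depth of $T_\I$ has its product in the open (hence closed) right ideal~$\I$, the coefficients of $\Xi_n$ all lie in~$\I$ for $n$ large; thus $\Xi_n\to 0$ in the projective-limit topology of $\R[[\C]]$, yielding $c_0\in\I\tim\C$ for every open right ideal~$\I$. The hard part will be to upgrade this to $c_0=0$: since $\C$ need not be a separated contramodule, membership in $\bigcap_\I\I\tim\C$ does not automatically imply vanishing. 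We expect to close this step by applying the classical Bass--Nakayama lemma (the implication $(7)\Rightarrow(1)$ in~\cite[Theorem~P]{Bas}): the decomposition $\fH\tim\C\subset\fH\cdot\C+\I\tim\C$ noted in Section~\ref{prelim-reductions} makes $M=\C/\I\tim\C$ into a module satisfying $M=(\fH+\I)/\I\cdot M$ with $(\fH+\I)/\I$ acting T\+nilpotently on~$M$, forcing $M=0$; iterating this observation in tandem with the tree construction (for suitably refined systems of open ideals) should transfer the vanishing down to~$c_0$ itself.
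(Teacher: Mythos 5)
Your ``if'' direction is exactly the paper's argument (Bass contramodule, $z_n=a_nz_{n+1}$, hence $\fH\tim\B=\B$, hence $\B=0$, then Lemma~\ref{bass-contramodule-vanishes}), and your tree/K\H onig's lemma setup in the ``only if'' direction also matches the paper's construction, correctly yielding that the iterated expansions eventually have all coefficients in any given open right ideal~$\I$.

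However, the final step of the ``only if'' direction is a genuine gap, and you have in fact located it yourself: from the finiteness of the trees you only get $c_0\in\bigcap_\I\I\tim\C$, which does not give $c_0=0$ because $\C$ need not be separated. Your proposed repair does not close it. Applying the classical Bass--Nakayama lemma to $M=\C/\I\tim\C$ is circular --- at best it reproves $\C=\I\tim\C$ for each $\I$, returning you to $\C=\bigcap_\I\I\tim\C$ --- and moreover $\C/\I\tim\C$ carries no module structure over a quotient ring when $\I$ is merely an open \emph{right} ideal, which is the generality of the lemma. The paper avoids the separatedness problem for $\C$ entirely by working one level up: the convergence $a_n\to0$ established by the K\H onig argument takes place in $\fH[[\C]]$, which \emph{is} separated and complete by construction. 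One then forms the convergent sum $\sum_{n\ge2}q_n$ in $\fH[[\fH[[\C]]]]$, where $q_n=\fH[[h]](a_{n-1})$ satisfies $\fH[[\pi]](q_{n+1})=a_n=\phi_\C(q_n)$ and $\fH[[\pi]](q_2)=b_1$; the telescoping identity
$$
 \fH[[\pi]]\left(\sum\nolimits_{n\ge2}q_n\right)
 -\phi_\C\left(\sum\nolimits_{n\ge2}q_n\right)=b_1
$$
combined with the contraassociativity axiom $\pi\circ(\fH[[\pi]]-\phi_\C)=0$ gives $b=\pi(b_1)=0$ directly. You need this (or an equivalent) telescoping argument in the completed group of formal linear combinations; passing through $\bigcap_\I\I\tim\C$ cannot work without an additional separatedness hypothesis that the lemma does not assume.
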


\begin{proof}
 ``Only if'': the argument follows the proof
of~\cite[Lemma~1.3.1]{Pweak} with an additional consideration
based on the K\H onig lemma (in the spirit of a paragraph
from~\cite[proof of Theorem~2.1]{Bas}).
 We will suppose that $\fH\tim\C=\C$ and prove that $\C=0$ in this case.
 This implication does not depend on the assumption that $\fH$ is
a left ideal in~$\R$; so $\fH\subset\R$ can be an arbitrary closed
subring without unit.

 Indeed, let $b\in\C$ be an element.
 By assumption, the contraaction map $\pi\:\fH[[\C]]\rarrow\C$ is
surjective.
 Let $h\:\C\rarrow\fH[[\C]]$ be a section of the map~$\pi$ (so
$\pi\circ h=\id_\C$).
 Introduce the notation $h(d)=\sum_{c\in\C}h_{d,c}c\in\fH[[\C]]$
for all $d\in\C$, where $h_{d,c}\in\fH$ and the $\C$\+indexed
family of elements $c\longmapsto h_{d,c}$ converges to zero in
the topology of $\fH$ for every $d\in\C$.

 For any set $X$, define inductively $\fH^{(0)}[[X]]=X$ and
$\fH^{(n)}[[X]]=\fH[[\fH^{(n-1)}[[X]]]]$ for $n\ge1$.
 Let $\phi^{(n)}_X\:\fH^{(n)}[[X]]\rarrow\fH[[X]]$ denote the iterated
monad multiplication (``opening of parentheses'') map.
 Set $b_1=h(b)\in\fH[[\C]]$, and define inductively
$b_n=\fH^{(n-1)}[[h]](b_{n-1})\in\fH^{(n)}[[\C]]$ for each $n\ge2$,
where $\fH^{(n-1)}[[h]]\:\fH^{(n-1)}[[\C]]\rarrow\fH^{(n)}[[\C]]$
is the map induced by~$h$.
 Put $a_n=\phi_\C^{(n)}(b_n)\in\fH[[\C]]$ for all $n\ge1$.

 Furthermore, set $q_n=\phi_{\fH[[\C]]}^{(n-1)}(b_n)=\fH[[h]](a_{n-1})
\in\fH[[\fH[[\C]]]]$ for all $n\ge2$.
 Then $\fH[[\pi]](q_n)=a_{n-1}$ and $\phi_\C(q_n)=a_n$.

 The abelian group $\fH[[X]]$ is separated and complete in its natural
topology with a base of neighborhoods of zero formed by the subgroups
$(\I\cap\fH)[[X]]\subset\fH[[X]]$, where $\I\subset\R$ are
open right ideals.
 For any map of sets $f\:X\rarrow Y$, the map $\fH[[f]]$ is continuous
with respect to such topologies on $\fH[[X]]$ and $\fH[[Y]]$.
 Besides, the map $\phi_X\:\fH[[\fH[[X]]]]\rarrow\fH[[X]]$ is
continuous, too, with respect to the above-described topology on
$\fH[[X]]$ and the similar topology of $\fH[[\fH[[X]]]]=\fH[[Y]]$,
where $Y=\fH[[X]]$ is viewed as an abstract set.

 The key observation is that the sequence of elements $a_n$ converges
to zero in the topology of $\fH[[\C]]$ as $n\to\infty$.
 In order to prove this convergence, we will represent the sequence of
elements $b_n\in\fH^{(n)}[[\C]]$ by an infinite rooted tree~$B$
in the following way.
 The root vertex (that is, the only vertex of depth~$0$) is marked by
the element $b\in\C$.
 Its children (i.~e., the vertices of depth~$1$) are marked by all
the elements $c\in\C$, one such child for every element~$c$.
 The edge leading from the root vertex~$b$ to its child~$c$ is marked
by the coefficient $h_{b,c}\in\fH$ in the formal linear combination
$b_1=h(b)=\sum_{c\in\C}h_{b,c}c\in\fH[[\C]]$.

 The element $b_2=\fH[[h]](b_1)\in\fH[[\fH[[\C]]]]$ has the form
$b_2=\sum_{c\in\C}h_{b,c}h(c)$, where $h(c)=\sum_{c_2\in\C}h_{c,c_2}c_2$
for every $c\in\C$.
 The children of a vertex of depth~$1$ marked by~$c$ in the tree $B$
are marked by all the elements $c_2\in\C$; and the edge leading
from~$c$ to~$c_2$ is marked by the element $h_{c,c_2}\in\fH$.

 Generally, the children of any vertex in $B$ are marked by all
the elements of~$\C$; we will write that the children of any fixed
vertex of depth~$n-1$ are marked by all the elements $c_n\in\C$, one
such child for every element $c_n\in\C$.
 Thus, a vertex~$v$ of depth~$n$ in $B$ is characterized by its root
path, which passes from the root vertex~$b$ through vertices marked by
$c_1=c$, $c_2$,~\dots, and comes to the vertex~$v$ marked by $c(v)=c_n$.
 So the set of all vertices of depth~$n$ in $B$ is bijective to
$\C^n=\{(c_1,\dotsc,c_n)\mid c_i\in\C\}$.
 The edge going from a vertex marked by~$c_{n-1}$ to its child marked
by~$c_n$ is marked by the element $h_{c_{n-1},c_n}\in\fH$.

 In addition to marking all the vertices and edges of $B$, let us
also mark all the root paths.
 A root path going from the root vertex~$b$ to a vertex marked
by~$c_1$, to a vertex marked by~$c_2$, etc., and coming to a vertex~$v$
marked by~$c_n$, goes along the edges marked by the elements
$h_{b,c_1}$, $h_{c_1,c_2}$,~\dots, $h_{c_{n-1},c_n}\in\fH$.
 We mark such a root path by the product $r(v)=h_{b,c_1}\dotsm
h_{c_{n-1},c_n}\in\fH$ of the elements marking its edges.

 The purpose of this construction is to observe that the element
$a_n\in\fH[[\C]]$ can be expressed as the infinite sum
$a_n=\sum_{v\in B_n}r(v)c(v)$ over the set $B_n$ of all
vertices of depth~$n$ in the tree~$B$.
 This sum converges in the topology of $\fH[[\C]]$.

 In order to show that the sequence of elements $a_n$ converges to
zero in $\fH[[\C]]$, choose a proper open right ideal $\I\subset\R$.
 Denote by $B^\I$ the subtree of $B$ formed by all the vertices
$v\in B$ with $r(v)\notin\I$.
 The root vertex belongs to $B^\I$, since $1\notin\I$; and whenever
$r(v)\in\I$ for some $v\in B$, one also has $r(w)\in B$ for all
the descendants $w\in B$ of the vertex~$v$; so $B^\I$ is indeed a tree.

 Furthermore, the tree $B^\I$ is locally finite, because for every
vertex $v\in B$ with $c(v)=c_{n-1}$ there exists an open right ideal
$\J\subset\R$ such that $r(v)\J\subset\I$, and the marking element
$h_{c_{n-1},c_n}$ of all but a finite subset of the edges going down
from~$v$ belongs to $\J\cap\fH$ (as $h(c_{n-1})=
\sum_{c_n\in\C}h_{c_{n-1},c_n}c_n$ is an element of $\fH[[\C]]$).
 So, denoting by $vc_n\in B$ the child of~$v$ marked by~$c_n$, we have
$r(vc_n)=r(v)h_{c_{n-1},c_n}\in\I$ for all but a finite subset
of $c_n\in\C$.

 Finally, the tree $B^\I$ has no infinite branches, since the ring
$\fH$ is topologically left T\+nilpotent.
 Indeed, the sequence of the marking elements $r(v_n)$ of the root
paths of the vertices~$v_n$ along any infinite branch in $B$ converges
to zero in~$\fH$, hence $r(v_n)\in\I\cap\fH$ for $n\gg0$.
 By the K\H onig lemma, it follows that the tree $B^\I$ is finite,
so it has a finite depth~$m$.
 Thus $a_n\in(\I\cap\fH)[[\C]]$ for all $n>m$.

 Now we can finish the proof of the ``only if'' assertion of the lemma.
 Since the sequence $a_n$ converges to zero in $\fH[[\C]]$ as
$n\to\infty$, the sequence $q_n=\fH[[h]](a_{n-1})\in\fH[[\fH[[\C]]]]$
converges to zero in the topology of $\fH[[Y]]$, where $Y=\fH[[\C]]$.
 So the sum $\sum_{n=2}^\infty q_n\in\fH[[\fH[[\C]]]]$ is well-defined
as the limit of finite partial sums.
 Furthermore, we have $\fH[[\pi]](q_{n+1})=a_n=\phi_\C(q_n)$ for all
$n\ge2$ and $\fH[[\pi]](q_2)=a_1=b_1$.
 Hence
$$
 \fH[[\pi]]\left(\sum\nolimits_{n=2}^\infty q_n\right)-
 \phi_\C\left(\sum\nolimits_{n=2}^\infty q_n\right) = b_1
$$
(we recall that the maps $\fH[[\pi]]$ and $\phi_\C$ are continuous, as
mentioned in the above discussion).
 Therefore, $b=\pi(b_1)=0$ by the contraassociativity equation,
$$
 \pi\circ(\fH[[\pi]]-\phi_\C)=0.
$$

 ``If'': we suppose that $\fH\tim\C=\C$ implies $\C=0$ for any left
$\R$\+contramodule $\C$, and prove that $\fH$ is topologically
left T\+nilpotent.
 Given a sequence of elements $(a_n\in\fH)_{n\ge1}$, consider
the related Bass flat left $\R$\+contramodule~$\B$.
 Denote by $z_n\in\B$ the images of the free generators~$x_n$,
$n\ge1$, under the surjective left $\R$\+contramodule morphism
$\R[[x_1,x_2,x_3,\dotsc]]\rarrow\B$ (in the notation of the proof
of Lemma~\ref{bass-contramodule-vanishes}).
 Then we have $z_n=a_nz_{n+1}\in\fH\tim\B$ for all $n\ge1$.
 Since $\fH\subset\R$ is a closed left ideal, $\fH\tim\B$ is
an $\R$\+subcontramodule in~$\B$.
 Since the left $\R$\+contramodule $\B$ is generated by its elements
$z_n$, $n\ge1$, it follows that $\fH\tim\B=\B$.
 Thus (by assumption) we have $\B=0$.
 Applying Lemma~\ref{bass-contramodule-vanishes}, we can conclude that
the sequence of elements $a_1\dotsm a_n\in\fH$ converges to zero
in $\R$ as $n\to\infty$.
\end{proof}

\begin{lem} \label{T-nilpotent-in-quotients}
 Let $H$ be a separated topological ring without unit, with a base of
neighborhoods of zero formed by open right ideals, and let $K\subset H$
be a closed two-sided ideal.
 Then $H$ is topologically left T\+nilpotent if and only if both
$K$ and $H/K$ are.
\end{lem}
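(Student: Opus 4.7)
The plan is to dispatch the two directions separately, with the ``only if'' direction being a direct consequence of basic facts about subspace and quotient topologies (as developed in Section~\ref{prelim-quotient-topologies}), and essentially all the content living in the ``if'' direction.

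For the ``only if'' direction, I would argue as follows. Any sequence $(k_n)$ in $K$ is also a sequence in $H$, so the products $k_1\dotsm k_n$ converge to zero in $H$; since $K$ carries the induced topology (and $0\in K$ with $K$ closed), they converge to zero in $K$. For $H/K$, given a sequence $(\bar a_n)$ lift to $(a_n)$ in $H$; the products $a_1\dotsm a_n$ converge to zero in $H$, and the quotient map is continuous (the base of open right ideals of $H/K$ has the form $(A+K)/K$ with $A\subset H$ open), so the images $\bar a_1\dotsm\bar a_n$ converge to zero in $H/K$.

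For the ``if'' direction, fix a sequence $(a_n)_{n\ge1}$ in $H$ and an open right ideal $A\subset H$; I want to find $N$ with $a_1\dotsm a_n\in A$ for all $n\ge N$. The main obstacle is that T\+nilpotency of $H/K$ only tells us $a_1\dotsm a_n\in A+K$ for large $n$, so the ``$K$\+error'' must be controlled separately. My plan is to build recursively two sequences $N_0<N_1<N_2<\dotsb$ and $\ell_1,\ell_2,\dotsc\in K$ together with ``error terms'' $b_j\in A$ such that
$$
 a_1\dotsm a_{N_j}=b_j+\ell_1\ell_2\dotsm\ell_j.
$$
To pass from step $j$ to step $j+1$, use continuity of multiplication (the without-unit analogue of condition~(ii) from Section~\ref{prelim-topol-rings}) to find an open right ideal $B_j\subset H$ with $(\ell_1\dotsm\ell_j)B_j\subset A$; then apply topological left T\+nilpotency of $H/K$ to the tail $(a_{N_j+m})_{m\ge1}$ to find $N_{j+1}>N_j$ with $a_{N_j+1}\dotsm a_{N_{j+1}}=b'+\ell_{j+1}$, where $b'\in B_j$ and $\ell_{j+1}\in K$. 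Multiplying out $(b_j+\ell_1\dotsm\ell_j)(b'+\ell_{j+1})$, the three cross terms $b_jb'$, $b_j\ell_{j+1}$, and $(\ell_1\dotsm\ell_j)b'$ all lie in $A$ (the first two because $A$ is a right ideal, the third by choice of $B_j$), so only the term $\ell_1\dotsm\ell_{j+1}\in K$ survives modulo~$A$.

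The final step is to invoke topological left T\+nilpotency of $K$ applied to the sequence $(\ell_j)_{j\ge1}\subset K$: since the topology on $K$ is induced from $H$, the open right ideal $A\cap K$ of $K$ eventually contains $\ell_1\dotsm\ell_j$, so $a_1\dotsm a_{N_j}\in A$ for all $j\ge j_0$. To extend from the subsequence $\{N_j\}$ to all sufficiently large indices, note that for any $n\ge N_{j_0}$ we can write $a_1\dotsm a_n=(a_1\dotsm a_{N_j})(a_{N_j+1}\dotsm a_n)$ for an appropriate $j\ge j_0$, which lies in $A\cdot H\subset A$ since $A$ is a right ideal. The hardest part of the argument is setting up the recursive decomposition correctly so that the ``$K$\+remainder'' accumulates into an honest product $\ell_1\dotsm\ell_j$ of elements of $K$, rather than some uncontrollable mixed expression; everything else is formal.
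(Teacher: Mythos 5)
Your proof is correct and follows essentially the same route as the paper's: both directions are handled the same way, and in the ``if'' direction both arguments interleave the topological left T\+nilpotency of $H/K$ (to produce block products lying in $J+K$ for recursively chosen open right ideals $J$, obtained from continuity of multiplication) with that of $K$ applied to the accumulated product of the $K$\+components. The only difference is bookkeeping: the paper keeps all decompositions $a_{n_{m-1}+1}\dotsm a_{n_m}=b_m+c_m$ and expands $(b_1+c_1)\dotsm(b_m+c_m)$ at the end, grouping the cross terms, whereas you absorb the non-$K$ part into a single element of the target ideal at each inductive step --- the underlying idea is identical.
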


\begin{proof}
 The ``only if'' assertion is obvious; let us prove the ``if''.
 Let $a_1$, $a_2$, $a_3$,~\dots\ be a sequence of elements in~$H$,
and let $I\subset H$ be an open right ideal.
 Denote by $\bar a_i$ the images of the elements $a_i$ in $H/K$.
 For any open right ideal $J\subset H$, we will denote by
$\overline J\subset H/K$ the image of the ideal~$J$.
 Then $\overline J$ is an open right ideal in~$H/K$.

 Since $H/K$ is topologically left T\+nilpotent, there exists
an integer $n_1\ge1$ such that the product $\bar a_1\dotsm\bar a_{n_1}$
belongs to~$\overline I$.
 Let $J_1\subset H$ be an open right ideal such that
$a_1\dotsm a_{n_1}J_1\subset I$.
 Then there exists an integer $n_2>n_1$ such that the product
$\bar a_{n_1+1}\dotsm\bar a_{n_2}$ belongs to~$\overline J_1$.
 Let $J_2\subset H$ be an open right ideal such that
$a_{n_1+1}\dotsm a_{n_2}J_2\subset J_1$, etc.
 Proceeding in this way, we construct a sequence of integers
$0=n_0<n_1<n_1<n_2<\dotsb$ and open right ideals $I=J_0$, $J_1$,
$J_2$,~\dots~$\subset H$ such that
$\bar a_{n_{m-1}+1}\dotsm\bar a_{n_m}\in\overline J_{m-1}$
and $a_{n_{m-1}+1}\dotsm a_{n_m}J_m\subset J_{m-1}$
for all $m\ge1$.

 For every $m\ge1$, we have $a_{n_{m-1}+1}\dotsm a_{n_m}\in J_{m-1}+K$.
 Choose $b_m\in J_{m-1}$ and $c_m\in K$ such that
$a_{n_{m-1}+1}\dotsm a_{n_m}=b_m+c_m$.
 Since $K$ is topologically left T\+nilpotent, there exists $m\ge1$
such that the product $c_1c_2\dotsm c_m$ belongs to $I\cap K$.
 Now we have
\begin{multline*}
 a_1\dotsm a_{n_m}=(b_1+c_1)\dotsm(b_m+c_m) = c_1c_2\dotsm c_m
 + b_1c_2c_3\dotsm c_m \\ + (b_1+c_1)b_2c_3c_4\dotsm c_m  + \dotsb
 + (b_1+c_1)(b_2+c_2)\dotsm (b_{m-2}+c_{m-2})b_{m-1}c_m \\
 + (b_1+c_1)(b_2+c_2)\dotsm(b_{m-1}+c_{m-1})b_m \\
 \in I\cap K + J_0 + (b_1+c_1)J_1 + \dotsb +
 (b_1+c_1)\dotsm (b_{m-1}+c_{m-1})J_m = I.
\end{multline*}
\end{proof}

\Section{Topological Jacobson Radical} \label{topological-jacobson-secn}

 Let $\R$ be a complete, separated topological associative ring with
a base of neighborhoods of zero formed by open right ideals.
 By an \emph{open maximal right ideal} $\fM\subset\R$ we mean
a maximal right ideal that is simultaneously an open right ideal.
 Notice that this is the same thing as a maximal element in the set
of all proper open right ideals, as any right ideal containing
an open right ideal is open.
 It follows that every proper open right ideal $\I\varsubsetneq\R$ is
contained in an open maximal right ideal $\I\subset\fM\subset\R$.

 We define the \emph{topological Jacobson radical} $\fH$ of
the topological ring $\R$ as the intersection of its open maximal
right ideals.
 This definition was previously discussed in
the paper~\cite[Section~3.B]{IMR}.

\begin{lem} \label{topological-jacobson-twosided}
 For any complete, separated topological associative ring\/ $\R$
with a base of neighborhoods of zero formed by open right ideals,
the topological Jacobson radical\/ $\fH\subset\R$ is a closed
two-sided ideal in\/~$\R$.
\end{lem}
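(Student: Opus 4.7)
The plan is to verify the three assertions—$\fH$ is closed, $\fH$ is a right ideal, and $\fH$ is a left ideal—in turn, with only the last requiring real work.

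Closedness is automatic: every open subgroup of a topological abelian group is also closed, so each open maximal right ideal is closed in~$\R$, and an intersection of closed subgroups is closed. The right-ideal property is equally immediate, since any intersection of right ideals is a right ideal.

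For the left-ideal property, I reinterpret $\fH$ as an intersection of annihilators of simple discrete right $\R$\+modules, which are automatically two-sided. Concretely, for each open maximal right ideal $\fM\subset\R$ the cyclic discrete right $\R$\+module $\R/\fM$ is simple, and I claim
\[
 \fH \;=\; \bigcap\nolimits_\fM \Ann_\R(\R/\fM),
\]
where $\fM$ ranges over all open maximal right ideals of $\R$. The inclusion ``$\supset$'' is clear, since $\Ann_\R(\R/\fM)$ is contained in the annihilator of the coset $1+\fM$, which equals~$\fM$. For ``$\subset$'', fix an open maximal right ideal $\fM$ and an element $r\in\R$, and set $(\fM:_l r):=\{s\in\R\mid rs\in\fM\}$, which is the annihilator of $r+\fM$ in $\R/\fM$ under the right action. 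This is a right ideal, and it is \emph{open}, because left multiplication by $r$ is continuous by condition~(ii) of Section~\ref{prelim-topol-rings}, so $(\fM:_l r)$ is the preimage of an open set under a continuous map. By simplicity of $\R/\fM$, the right ideal $(\fM:_l r)$ is either all of $\R$ (when $r\in\fM$, since then $r\R\subset\fM$) or a maximal right ideal (via the isomorphism $\R/(\fM:_l r)\cong(r+\fM)\R=\R/\fM$); in either case it contains~$\fH$. Intersecting over all $r\in\R$ gives $\fH\subset\Ann_\R(\R/\fM)$, and intersecting over all $\fM$ yields the claim. Since each $\Ann_\R(\R/\fM)$ is a two-sided ideal, so is~$\fH$.

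The whole argument is the standard identification of the Jacobson radical with $\bigcap\{\Ann_\R(V):V\text{ simple}\}$, adapted to the topological setting; the only genuinely topological input is the openness of $(\fM:_l r)$, which is precisely what condition~(ii) in the definition of a topological ring buys us. There is no substantial obstacle beyond keeping careful track of which inclusions are tautological and which require this topological input.
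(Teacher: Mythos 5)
Your proposal is correct and follows essentially the same route as the paper: the paper's proof also reduces the left-ideal property to showing that, for each open maximal right ideal $\fM$ and each $r\in\R$, the right ideal $\fN=\{q\in\R\mid rq\in\fM\}$ (your $(\fM:_l r)$) is open and is either all of $\R$ or again an open maximal right ideal, hence contains $\fH$. Your packaging of this as the identity $\fH=\bigcap_\fM\Ann_\R(\R/\fM)$ matches the alternative remark the paper appends, citing Lemma~\ref{topological-jacobson-characterized}(ii).
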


\begin{proof}
 The assertion that $\fH$ is a closed right ideal in $\R$ is obvious
(as any open subgroup is closed, and any intersection of closed right
ideals is a closed right ideal).
 To prove that $\fH$ is a two-sided ideal, consider two elements
$h\in\fH$ and $r\in\R$.
 Given an open maximal right ideal $\fM\subset\R$, we have to show
that $rh\in\fM$.
 Let $\fN\subset\R$ be the set of all elements $q\in\R$ such that
$rq\in\fM$.
 Then $\fN$ is an open right ideal in~$\R$.
 The left multiplication with~$r$ is an injective right $\R$\+module
morphism $r*\:\R/\fN\rarrow\R/\fM$ (taking a coset $s+\fN$, \,$s\in\R$,
to the coset $rs+\fM$).
 Since $\R/\fM$ is a simple right $\R$\+module, it follows that either
$\fN=\R$, or $\R/\fN$ is a simple right $\R$\+module, too.
 In the former case, we have $r\R\subset\fM$, hence $rh\in\fM$.
 In the latter case, $\fN$ is an open maximal right ideal in $\R$,
hence $h\in\fN$ and $rh\in\fM$.

 Alternatively, it suffices to observe that $\fH$ is the intersection
of the annihilators of all the simple discrete right $\R$\+modules
(see Lemma~\ref{topological-jacobson-characterized}(ii) below).
 These are closed two-sided ideals in\/~$\R$.
\end{proof}

 Denote by $H\subset\R$ the Jacobson radical of the ring $\R$ viewed
as an abstract ring (without the topology).
 So $H$ is the intersection of all maximal right ideals in $\R$, and
as the set of all open maximal right ideals is a subset of the set
of all maximal right ideals, it follows that the topological Jacobson
radical of the ring $\R$ contains the nontopological one, that is
$H\subset\fH\subset\R$.

\begin{lem} \label{topological-jacobson-characterized}
 Let\/ $\R$ be a complete, separated topological associative ring with
a base of neighborhoods of zero formed by open right ideals, let\/
$\fH\subset\R$ be the topological Jacobson radical of\/ $\R$, and let
$h\in\R$ be an element.
 Then the following conditions are equivalent:
\begin{enumerate}
\renewcommand{\theenumi}{\roman{enumi}}
\item $h\in\fH$;
\item for every (semi)simple discrete right\/ $\R$\+module\/ $\N$
one has\/ $\N h=0$;
\item for every element $r\in\R$ and every open right ideal\/
$\I\subset\R$ one has $(1-hr)\R+\I=\R$;
\item for every pair of elements $r$, $s\in\R$ and every open right
ideal\/ $\I\subset\R$ one has $(1-shr)\R+\I=\R$.
\end{enumerate}
\end{lem}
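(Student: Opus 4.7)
The plan is to prove the cycle (i) $\Leftrightarrow$ (ii), (i) $\Leftrightarrow$ (iii), and then deduce (iii) $\Leftrightarrow$ (iv) from the two-sidedness of $\fH$ established in Lemma~\ref{topological-jacobson-twosided}. The preliminary observation I would use repeatedly is that every simple discrete right $\R$\+module is isomorphic to $\R/\fM$ for some open maximal right ideal $\fM\subset\R$: a nonzero element~$x$ generates the module by simplicity, its annihilator is an open right ideal by discreteness, and the quotient being simple forces the annihilator to be maximal. Dually, for any open maximal right ideal $\fM$, the module $\R/\fM$ is simple discrete.

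For (i)~$\Rightarrow$~(ii), given a simple discrete right $\R$\+module $\N\cong\R/\fM$ with $\fM\supset\fH\ni h$, the action of~$h$ on $\R/\fM$ is zero; the semisimple case follows by decomposing into a coproduct of simples. Conversely, (ii)~$\Rightarrow$~(i) because for every open maximal right ideal $\fM$ the simple discrete module $\R/\fM$ is annihilated by~$h$, giving $h=1\cdot h\in\fM$.

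For (i)~$\Rightarrow$~(iii), I would argue by contradiction: if $(1-hr)\R+\I$ were a proper right ideal, it would be open (as it contains the open ideal $\I$), so it would be contained in some open maximal right ideal $\fM$ (any proper open right ideal extends to a maximal one, which is automatically open since it contains an open ideal). Then $1-hr\in\fM$ and $hr\in\fM$ (because $h\in\fH\subset\fM$), forcing $1\in\fM$, a contradiction. For the reverse (iii)~$\Rightarrow$~(i), if $h\notin\fH$ then some open maximal right ideal~$\fM$ omits~$h$; since $h\R+\fM$ strictly contains~$\fM$, by maximality it equals~$\R$, yielding $1=hr+m$ for some $r\in\R$, $m\in\fM$. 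Then $1-hr=m\in\fM$, so $(1-hr)\R+\fM=\fM\neq\R$ with $\I=\fM$, contradicting~(iii).

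Finally, (iv)~$\Rightarrow$~(iii) is the special case $s=1$. For (i)~$\Rightarrow$~(iv), I would apply the already-established implication (i)~$\Rightarrow$~(iii) to the element $sh$ in place of $h$: by Lemma~\ref{topological-jacobson-twosided} the topological Jacobson radical $\fH$ is a two-sided ideal, so $sh\in\fH$, and (iii) for $sh$ reads $(1-shr)\R+\I=\R$, which is precisely (iv). No step presents serious obstacles; the only nontrivial ingredient is the Zorn-style argument extending a proper open right ideal to an open maximal one, which is standard because openness is preserved under enlargement.
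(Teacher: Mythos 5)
Your proposal is correct and follows essentially the same route as the paper: the identification of simple discrete modules with $\R/\fM$ for open maximal right ideals gives (i)$\,\Leftrightarrow\,$(ii), the same contradiction argument gives (i)$\,\Rightarrow\,$(iii), and (iv) is deduced from (iii) via the two-sidedness of $\fH$ from Lemma~\ref{topological-jacobson-twosided}. Your direct proof of (iii)$\,\Rightarrow\,$(i) via a maximal ideal omitting~$h$ is just the paper's (iii)$\,\Rightarrow\,$(ii) argument translated from the language of simple modules $\N=\R/\fM$ into the language of maximal right ideals, so the substance is identical.
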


\begin{proof}
 (i)$\,\Longleftrightarrow\,$(ii) holds, because the open maximal right
ideals in $\R$ are precisely the annihilators of nonzero elements in
simple discrete right $\R$\+modules (and the semisimple discrete right
$\R$\+modules are the direct sums of simple ones).

 (i)$\,\Longrightarrow\,$(iii) Observe that $(1-hr)\R+\I$ is
an open right ideal in~$\R$.
 If different from $\R$, it must be contained in an open maximal
right ideal~$\fM$.
 Now $1-hr\in\fM$ and $h\in\fM$ imply $1\in\fM$, a contradiction.

 (iii)$\,\Longrightarrow\,$(ii) Suppose $\N$ is a simple discrete
left $\R$\+module and $\N h\ne0$.
 Choose an element $b\in\N$ for which $bh\ne0$ in~$\N$.
 Since $\N$ is simple, one has $bh\R=\N$, so there exists an element
$r\in\R$ such that $bhr=b$.
 Thus we have $b(1-hr)=0$.
 Let $\I=\fM\subset\R$ be the annihilator of the element $b\in\N$.
 Then $\I$ is an open (maximal) right ideal in $\R$ and $(1-hr)\in\I$,
hence $(1-hr)\R+\I=\I\ne\R$.

 The implication (iv)$\,\Longrightarrow\,$(iii) is obvious, and to
prove (iii)$\,\Longrightarrow\,$(iv) it suffices to observe that
the set of all elements $h\in\R$ satisfying~(iii) is a two-sided ideal
(since (i)$\,\Longleftrightarrow\,$(iii) and by
Lemma~\ref{topological-jacobson-twosided}).
\end{proof}

\begin{lem} \label{coperfectness-reformulated}
 Let\/ $\R$ be a complete, separated topological ring with a base of
neighborhoods of zero formed by open right ideals.
 Then the following two conditions are equivalent:
\begin{enumerate}
\renewcommand{\theenumi}{\roman{enumi}}
\item for every sequence of elements $a_1$, $a_2$, $a_3$,~\dots~$\in\R$
and every open right ideal\/ $\I\subset\R$, the chain of right ideals\/
$\R\supset a_1\R+\I\supset a_1a_2\R+\I\supset a_1a_2a_3\R+\I\supset
\dotsb$ terminates;
\item any descending chain of cyclic discrete right\/ $\R$\+modules
terminates.
\end{enumerate}
\end{lem}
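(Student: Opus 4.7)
The plan is to prove both directions by hand, using the standard correspondence between cyclic submodules of a cyclic discrete right $\R$\+module and right ideals containing a fixed open right ideal. Concretely, for a discrete right $\R$\+module generated by an element $x$ of annihilator $\I=\Ann_\R(x)$, the map $r\mapsto xr$ induces an isomorphism $\R/\I\cong x\R$ and hence a bijection between cyclic right $\R$\+submodules of $x\R$ and right ideals of $\R$ containing~$\I$.

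For (ii)$\Rightarrow$(i) I would take the data of (i), form the elements $x_n=a_1\dotsm a_n+\I\in\R/\I$, and observe that $x_{n+1}=x_na_{n+1}$ so that $x_n\R\supseteq x_{n+1}\R$ is a descending chain of cyclic discrete right $\R$\+submodules of\/ $\R/\I$. Condition~(ii) makes this chain terminate; under the bijection above the submodule $x_n\R\subseteq\R/\I$ corresponds to the right ideal $a_1\dotsm a_n\R+\I\subseteq\R$, so the chain in~(i) terminates as well.

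For (i)$\Rightarrow$(ii) I would start with a descending chain $N_0\supseteq N_1\supseteq N_2\supseteq\dotsb$ of cyclic discrete right $\R$\+modules (all of which are automatically submodules of $N_0$), pick an arbitrary generator $x_0$ of $N_0$, and set $\I=\Ann_\R(x_0)$, which is open since $N_0$ is discrete. Inductively, since $N_n\subseteq N_{n-1}=x_{n-1}\R$, any generator $x_n$ of $N_n$ has the form $x_n=x_{n-1}a_n$ for some $a_n\in\R$; so $x_n=x_0\cdot a_1a_2\dotsm a_n$, and under the isomorphism $\R/\I\cong N_0$ the submodule $N_n\subseteq N_0$ corresponds to the right ideal $a_1\dotsm a_n\R+\I$ (modulo $\I$). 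Applying (i) to the sequence $(a_n)$ and the ideal $\I$ yields the termination of the chain $(N_n)$.

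Neither direction presents any serious obstacle; the lemma is essentially an unpacking of definitions. The only minor care needed is the noncanonical choice of the generators~$x_n$ in the second direction, but this is harmless because termination of a descending chain of submodules depends only on the submodules themselves, not on any chosen generators.
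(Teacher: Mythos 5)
Your proof is correct and is essentially the paper's own argument: the paper disposes of the lemma in one sentence by noting that the cyclic discrete right $\R$\+modules are exactly the modules $\R/\I$ for open right ideals $\I$, and your two directions simply unpack that observation. (One cosmetic point: the bijection is between \emph{all} submodules of $x\R$ and right ideals containing $\I$, with the cyclic submodules corresponding to those of the form $a\R+\I$; your argument uses only this correct version.)
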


\begin{proof}
 One observes that the right $\R$\+module $\R/\I$ is cyclic and discrete
for any open right ideal $\I\subset\R$, and any cyclic discrete right
$\R$\+module has this form.
\end{proof}

\begin{lem} \label{coperfect-implies-nonzero-socle}
 Let\/ $\R$ be a complete, separated topological ring with a base of
neighborhoods of zero formed by open right ideals.
 Assume that every descending chain of cyclic discrete right\/
$\R$\+modules terminates.
 Then any nonzero discrete right\/ $\R$\+module has nonzero socle.
\end{lem}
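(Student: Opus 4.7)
My plan is to argue by contradiction: if a nonzero discrete right $\R$-module $\N$ had zero socle, i.e.\ contained no simple submodule, then I would construct an infinite strictly descending chain of cyclic discrete right $\R$-submodules of $\N$, directly contradicting the hypothesis of the proposition.

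Pick any nonzero $b_0\in\N$. The cyclic submodule $b_0\R\subseteq\N$ is nonzero (since $b_0=b_0\cdot 1\in b_0\R$) and discrete (as a submodule of $\N$). Any simple submodule of $b_0\R$ would be a simple submodule of $\N$, so under the contradiction hypothesis $b_0\R$ has no simple submodule; in particular, $b_0\R$ is itself not simple and therefore contains a proper nonzero $\R$-submodule $M_1$. Choose any nonzero $b_1\in M_1$. Then $b_1\R\subseteq M_1\subsetneq b_0\R$ is a proper cyclic submodule of $b_0\R$, nonzero because $b_1\in b_1\R$. Iterating the construction produces elements $b_0,b_1,b_2,\ldots\in\N$ and a strictly descending infinite chain
\[
b_0\R\;\supsetneq\;b_1\R\;\supsetneq\;b_2\R\;\supsetneq\;\cdots
\]
of cyclic discrete right $\R$-submodules of $\N$, contradicting the assumption.

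As a sanity check, I would match this with condition~(i) of Lemma~\ref{coperfectness-reformulated}: since $b_n\in b_{n-1}\R$ one can write $b_n=b_{n-1}a_n$ with $a_n\in\R$, so $b_n=b_0 a_1 a_2\cdots a_n$; setting $\I=\Ann_\R(b_0)$, which is open because $\N$ is discrete, the descending chain above pulls back along $\R\twoheadrightarrow b_0\R$ to the strictly descending chain of right ideals $a_1\R+\I\supsetneq a_1 a_2\R+\I\supsetneq\cdots$ in $\R$. Essentially no obstacle arises; the argument is the standard derivation of the existence of a simple submodule from DCC on cyclic submodules, and nothing beyond the unitality of $\N$ and openness of annihilators in a discrete module is required.
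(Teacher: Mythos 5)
Your proof is correct and is exactly the standard argument the paper has in mind (its own proof merely asserts that ``one easily shows'' the existence of a simple submodule under the descending chain condition). The descending chain $b_0\R\supsetneq b_1\R\supsetneq\cdots$ you build is indeed a descending chain of cyclic discrete right $\R$\+modules in the sense of the hypothesis, as your translation via $\I=\Ann_\R(b_0)$ and Lemma~\ref{coperfectness-reformulated} confirms, so the contradiction is genuine.
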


\begin{proof}
 One easily shows that, in the assumptions of the lemma, any nonzero
discrete right $\R$\+module has a simple submodule.
\end{proof}

\begin{lem} \label{nonzero-socle-implies-jacobson-T-nilpotent}
 Let\/ $\R$ be a complete, separated topological ring with a base of
neighborhoods of zero formed by open right ideals.
 Assume that every nonzero discrete right\/ $\R$\+module has nonzero
socle.
 Then the topological Jacobson radical\/ $\fH\subset\R$ is topologically
left T\+nilpotent.
\end{lem}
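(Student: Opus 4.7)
The plan is to reduce the statement to the discrete-module Nakayama lemma (Lemma~\ref{discrete-module-Nakayama}) and then invoke the characterization of the topological Jacobson radical from Lemma~\ref{topological-jacobson-characterized}.

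First, note that $\fH$ is a closed two-sided ideal in $\R$ by Lemma~\ref{topological-jacobson-twosided}; in particular $\fH$ is a (closed) left ideal, so Lemma~\ref{discrete-module-Nakayama} applies. Thus, to show that $\fH$ is topologically left T\+nilpotent, it suffices to check that for every nonzero discrete right $\R$\+module $\N$ one has $\N_\fH \ne 0$, where $\N_\fH$ denotes the submodule of elements annihilated by~$\fH$.

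Next, given a nonzero discrete right $\R$\+module $\N$, the hypothesis provides a simple discrete right $\R$\+submodule $S \subset \N$. By Lemma~\ref{topological-jacobson-characterized}, condition~(ii), every simple discrete right $\R$\+module is annihilated by $\fH$, so $S \subset \N_\fH$. Hence $\N_\fH \ne 0$, which is exactly the condition needed to conclude via Lemma~\ref{discrete-module-Nakayama}.

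There is no serious obstacle here, because all the real work has been done in the earlier lemmas: the equivalence of topological T\+nilpotency with the vanishing-submodule condition (Lemma~\ref{discrete-module-Nakayama}), the two-sidedness/closedness of $\fH$ (Lemma~\ref{topological-jacobson-twosided}), and the fact that $\fH$ acts by zero on every simple discrete right $\R$\+module (Lemma~\ref{topological-jacobson-characterized}). The proof is essentially the observation that a simple submodule inside the socle of $\N$ automatically lies in $\N_\fH$.
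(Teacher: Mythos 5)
Your proof is correct and follows essentially the same route as the paper: reduce via the ``if'' direction of Lemma~\ref{discrete-module-Nakayama} to showing $\N_\fH\ne0$, and then observe that the (nonzero) socle is annihilated by $\fH$ by Lemma~\ref{topological-jacobson-characterized}(ii). The only cosmetic difference is that you pass to a single simple submodule rather than the whole socle.
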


\begin{proof}
 According to Lemma~\ref{discrete-module-Nakayama}, it suffices to show
that every nonzero discrete right $\R$\+module $\N$ contains a nonzero
element annihilated by~$\fH$.
 Indeed, let $\cL\subset\N$ be the socle of~$\N$.
 Then $\cL\ne0$ by assumption and $\cL\fH=0$ by
Lemma~\ref{topological-jacobson-characterized}(ii).
\end{proof}

\begin{lem} \label{T-nil-ideal-and-two-jacobson-radicals-lemma}
 Let\/ $\R$ be a complete, separated topological ring with a base of
neighborhoods of zero formed by open right ideals, let\/ $\fH\subset\R$
be the topological Jacobson radical of\/ $\R$, and let $H\subset\R$
be the Jacobson radical of the ring\/ $\R$ viewed as an abstract ring
(without the topology). \par
\textup{(a)} Let\/ $J\subset\R$ be a topologically nil left or
right ideal.
 Then\/ $J\subset H\subset\fH$. \par
\textup{(b)} In particular, if\/ $\fH$ is topologically nil,
then\/ $\fH=H$.
\end{lem}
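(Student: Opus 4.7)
The plan is to reduce~(b) to~(a): since $\fH$ is a closed two-sided ideal in $\R$ by Lemma~\ref{topological-jacobson-twosided}, the hypothesis of~(b) makes $\fH$ itself a topologically nil two-sided ideal, so applying~(a) to $J=\fH$ will give $\fH\subset H$; combined with the inclusion $H\subset\fH$ already established in the paragraph preceding the lemma, this forces $\fH=H$.

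For~(a), since $H\subset\fH$ is given, it remains to prove $J\subset H$. I will use the standard characterization of the abstract Jacobson radical: $a\in H$ if and only if $1-ra$ is a two-sided unit of $\R$ for every $r\in\R$ (or, symmetrically, $1-ar$ is a unit for every~$r$). Fix $a\in J$ and $r\in\R$, and set $b=ra$ if $J$ is a left ideal, or $b=ar$ if $J$ is a right ideal. In either case $b\in J$, so by topological nilpotence $b^n\to 0$ in the topology of~$\R$.

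The central step is to exhibit an inverse of $1-b$ in $\R$ as a convergent geometric series $s=\sum_{n\ge 0}b^n$. To check convergence I will verify that the partial sums $s_m=1+b+\dots+b^{m-1}$ are Cauchy. Given an open right ideal $\I\subset\R$, pick $N$ such that $b^k\in\I$ for all $k\ge N$; then for $m>n\ge N$ one has
$$
 s_m-s_n=b^n+b^{n+1}+\dots+b^{m-1}\in\I,
$$
because $\I$ is an additive subgroup of~$\R$. Completeness of $\R$ then supplies a limit $s\in\R$. Passing to the limit in the finite identity $(1-b)s_m=1-b^m=s_m(1-b)$, using $b^m\to 0$ together with the (separate) continuity of left and right multiplication by the fixed element $1-b$, I obtain $(1-b)s=1=s(1-b)$. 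Thus $1-b$ is a unit in the abstract ring $\R$, and by the characterization above, $a\in H$.

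The only mildly delicate point is the Cauchy argument for the partial sums; it works precisely because the chosen neighborhood base consists of right ideals, hence in particular of additive subgroups of~$\R$, which lets finite sums of eventually-small terms stay inside any prescribed neighborhood of zero. Everything else is a direct consequence of topological nilpotence, completeness, and the standard unit-theoretic description of the Jacobson radical.
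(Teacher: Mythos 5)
Your proposal is correct and follows essentially the same route as the paper: the paper's proof of~(a) likewise observes that $1-x$ is invertible for $x\in J$ because the geometric series $1+x+x^2+\dotsb$ converges in~$\R$, and deduces~(b) from~(a). You have merely filled in the details the paper leaves implicit (the Cauchy/completeness argument for the partial sums, continuity of one-sided multiplications, and the unit-theoretic characterization of $H$), all of which check out.
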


\begin{proof}
 In part~(a), the inclusion $H\subset\fH$ was explained above in this
section, and the inclusion $J\subset H$ is true because
the element $1-x$ is invertible in $\R$ for every $x\in J$,
as the series $1+x+x^2+x^3+\dotsb$ converges in~$\R$.
 Part~(b) follows from part~(a).
\end{proof}

\begin{cor} \label{coperfect-implies-jacobson-T-nilpotent-cor}
 Let\/ $\R$ be a complete, separated topological ring with a base of
neighborhoods of zero formed by open right ideals.
 Assume that every descending chain of cyclic discrete right\/
$\R$\+modules terminates.
 Then the topological Jacobson radical\/ $\fH$ of the topological ring\/
$\R$ is topologically left T\+nilpotent, the Jacobson radical $H$ of
the ring\/ $\R$ viewed as an abstract ring is closed in\/ $\R$, and\/
$\fH=H$.
\end{cor}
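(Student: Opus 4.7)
The plan is to chain together the preceding lemmas in a straightforward manner; essentially all the real work has already been done in Lemmas \ref{coperfect-implies-nonzero-socle}, \ref{nonzero-socle-implies-jacobson-T-nilpotent}, and \ref{T-nil-ideal-and-two-jacobson-radicals-lemma}, plus Lemma \ref{topological-jacobson-twosided}.

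First, I would invoke Lemma \ref{coperfect-implies-nonzero-socle}: the hypothesis that every descending chain of cyclic discrete right $\R$\+modules terminates implies that every nonzero discrete right $\R$\+module has nonzero socle. Feeding this into Lemma \ref{nonzero-socle-implies-jacobson-T-nilpotent} immediately yields that the topological Jacobson radical $\fH \subset \R$ is topologically left T\+nilpotent. This gives the first of the three conclusions.

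Next, I would observe that any topologically left T\+nilpotent ring without unit is in particular topologically nil: given $a \in \fH$, the constant sequence $a_1 = a_2 = a_3 = \dotsb = a$ produces products $a, a^2, a^3, \dotsc$ converging to zero in~$\R$. Therefore $\fH$ is topologically nil, and Lemma \ref{T-nil-ideal-and-two-jacobson-radicals-lemma}(b) yields $\fH = H$.

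Finally, the closedness of $H$ follows for free: by Lemma \ref{topological-jacobson-twosided}, $\fH$ is a closed two-sided ideal in $\R$, and since $H = \fH$, we conclude that $H$ is closed in~$\R$. There is no real obstacle here — the proof is a three-line citation of the preceding results — so I would present it as a short sequential argument rather than elaborate on any individual step.
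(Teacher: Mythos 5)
Your proposal is correct and follows exactly the same route as the paper, which proves the corollary by citing Lemmas~\ref{coperfect-implies-nonzero-socle}, \ref{nonzero-socle-implies-jacobson-T-nilpotent}, and~\ref{T-nil-ideal-and-two-jacobson-radicals-lemma}(b). You merely make explicit two small steps the paper leaves implicit (topological left T\+nilpotency implies topological nilness via the constant sequence, and closedness of $\fH$ from Lemma~\ref{topological-jacobson-twosided}).
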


\begin{proof}
 Follows from Lemmas~\ref{coperfect-implies-nonzero-socle},
\ref{nonzero-socle-implies-jacobson-T-nilpotent},
and~\ref{T-nil-ideal-and-two-jacobson-radicals-lemma}(b).
\end{proof}

\begin{lem} \label{topological-jacobson-for-quotient-ring}
 Let\/ $\R$ be a complete, separated topological ring with a base of
neighborhoods of zero formed by open right ideals, $\fK\subset\R$ be
a closed two-sided ideal such that the quotient ring\/ $\S=\R/\fK$ is
complete in its quotient topology, and $p\:\R\rarrow\S$ be the natural
surjective map.
 Let\/ $\fH\subset\R$ and $\J\subset\S$ be the topological Jacobson
radicals of the topological rings\/ $\R$ and\/~$\S$.
 Then \par
\textup{(a)} $p(\fH)\subset\J$; \par
\textup{(b)} if\/ $\fK\subset\fH$, then\/ $\fH=p^{-1}(\J)$.
\end{lem}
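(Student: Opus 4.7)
The plan is to exploit the standard correspondence between open maximal right ideals in $\S$ and open maximal right ideals in $\R$ that contain the kernel $\fK$ of $p$, using only that $p\:\R\rarrow\S$ is a continuous, open, surjective ring homomorphism with kernel~$\fK$ (openness of $p$ being the definition of the quotient topology, and surjectivity being built into the assumption that $\R/\fK$ is already complete, so $\S=\R/\fK$ as topological rings).

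For part~(a), I would fix $h\in\fH$ and an arbitrary open maximal right ideal $\fN\subset\S$, and verify that $p^{-1}(\fN)\subset\R$ is an open maximal right ideal. Openness follows from continuity of~$p$; properness from $1\notin\fN$ combined with surjectivity of~$p$. For maximality, if $p^{-1}(\fN)\subsetneq\fM\subset\R$ is a right ideal, then $p(\fM)$ is a right ideal of $\S$ strictly containing $\fN$ (strictness because $\fK\subset p^{-1}(\fN)\subset\fM$ implies $\fM=p^{-1}(p(\fM))$), hence $p(\fM)=\S$, and then surjectivity of~$p$ combined with $\fK\subset\fM$ gives $\fM=\R$. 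Thus $h\in p^{-1}(\fN)$, so $p(h)\in\fN$; intersecting over all such $\fN$ yields $p(h)\in\J$.

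For part~(b), the inclusion $\fH\subset p^{-1}(\J)$ is immediate from~(a). For the reverse inclusion, I would fix $r\in p^{-1}(\J)$ and an arbitrary open maximal right ideal $\fM\subset\R$, and show $r\in\fM$. The assumption $\fK\subset\fH\subset\fM$ means $\fM/\fK$ is a well-defined right ideal of $\S=\R/\fK$; it is open because its preimage $\fM$ is open in $\R$ (this is the definition of the quotient topology), and it is maximal by the analogous correspondence argument (any strictly larger right ideal in $\S$ would pull back under~$p$ to a right ideal strictly containing $\fM$, forcing it to be $\R$, hence its image to be $\S$). Therefore $p(r)\in\J$ implies $p(r)\in\fM/\fK$, so $r\in\fM+\fK=\fM$; intersecting over all such $\fM$ gives $r\in\fH$.

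There is no real obstacle here—everything is the classical ideal correspondence theorem, and the only points that need explicit care are (i)~that $p$ is genuinely surjective (guaranteed by the completeness hypothesis on $\R/\fK$), and (ii)~that openness of $\fM/\fK\subset\S$ and of $p^{-1}(\fN)\subset\R$ follow directly from the definition of the quotient topology and from continuity of~$p$, respectively. The hypothesis $\fK\subset\fH$ in~(b) is used precisely to ensure that every open maximal right ideal of $\R$ contains $\fK$, so the quotient $\fM/\fK$ makes sense and captures all of $\R/\fM$.
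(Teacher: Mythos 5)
Your proposal is correct and follows essentially the same route as the paper: both arguments rest on the bijective correspondence (via the continuous, open, surjective map~$p$) between open maximal right ideals of $\S$ and open maximal right ideals of $\R$ containing $\fK$, with part~(a) using only the direction $\fN\mapsto p^{-1}(\fN)$ and part~(b) using the hypothesis $\fK\subset\fH$ to guarantee that every open maximal right ideal $\fM\subset\R$ contains $\fK$ and hence arises as $p^{-1}(p(\fM))$. The paper's proof is just a more condensed version of the same correspondence argument.
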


\begin{proof}
 If $\fN$ is an open maximal right ideal in $\S$, then $p^{-1}(\fN)$
is an open maximal right ideal in~$\R$ (since $p$~is a surjective
continuous ring homomorphism).
 The set of all right ideals $f^{-1}(\fN)\subset\R$ is contained in
the set of all open maximal right ideals $\fM\subset\R$, hence
$\fH=\bigcap_\fM\fM$ is contained in $p^{-1}(\J)=
\bigcap_\fN f^{-1}(\fN)$.
 Moreover, if $\fK\subset\fH$, then one has $\fK\subset\fM$ for
every~$\fM$.
 It follows that $\fN=p(\fM)$ is an open maximal right ideal in $\S$
(since $p$~is an open map) and $\fM=p^{-1}(\fN)$.
 So the sets of all right ideals $\fM$ and $p^{-1}(\fN)$ in $\R$
coincide in this case.
\end{proof}

 The following lemma is a part of~\cite[Theorem~3.8]{IMR}.

\begin{lem} \label{topological-jacobson-as-projective-limit}
 Let\/ $\R$ be a complete, separated topological ring with a base of
neighborhoods of zero formed by open two-sided ideals.
 For every discrete quotient ring $R=\R/\I$ of the topological ring\/
$\R$, consider the Jacobson radical $H(R)\subset R$ of the ring~$R$.
 Then the topological Jacobson radical\/ $\fH\subset\R$ is
the projective limit\/ $\fH=\varprojlim_{\I\subset\R}H(\R/\I)
\subset\varprojlim_{\I\subset\R}\R/\I=\R$ of the Jacobson radicals
$H(\R/\I)=H(R)$ taken over all the open two-sided ideals\/
$\I\subset\R$.
 Furthermore, the Jacobson radical $H$ of the ring\/ $\R$ viewed as
an abstract ring coincides with the topological Jacobson radical\/ $\fH$
(so, in particular, $H$ is a closed ideal in\/~$\R$).
\end{lem}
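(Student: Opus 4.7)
My plan is to prove the two assertions in turn, with the first being essentially a bookkeeping statement about the correspondence between open maximal right ideals in $\R$ and maximal right ideals in the discrete quotients, and the second following from a projective-limit argument for inverses.

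For the first claim, I would start by observing that, since open two-sided ideals form a base of neighborhoods of zero in $\R$, every open right ideal $\fM \subset \R$ contains some open two-sided ideal $\I \subset \R$. Combined with the standard lattice correspondence between right ideals of $\R$ containing $\I$ and right ideals of the discrete quotient $R = \R/\I$, this yields a bijection between the set of open maximal right ideals $\fM \subset \R$ containing a given $\I$ and the set of maximal right ideals of $\R/\I$. In particular, for each open two-sided ideal $\I$, the intersection of all open maximal right ideals of $\R$ containing $\I$ equals $p_\I^{-1}(H(\R/\I))$, where $p_\I \: \R \to \R/\I$ is the projection. Since every open maximal right ideal arises this way (for some sufficiently small $\I$), I can compute
\[
\fH = \bigcap_{\fM} \fM = \bigcap_\I \, p_\I^{-1}\!\bigl(H(\R/\I)\bigr),
\]
where the outer intersection runs over all open two-sided ideals $\I \subset \R$. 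This is exactly the description of $\fH$ as the projective limit $\varprojlim_\I H(\R/\I)$ inside $\R = \varprojlim_\I \R/\I$, proving the first claim.

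For the second claim, the inclusion $H \subset \fH$ is Lemma~\ref{T-nil-ideal-and-two-jacobson-radicals-lemma}(a) (or the discussion immediately preceding Lemma~\ref{topological-jacobson-characterized}). For the reverse inclusion, I would use the classical characterization that an element $h$ of an abstract ring belongs to its Jacobson radical if and only if $1 - rhs$ is two-sided invertible for all $r, s$ in the ring. Fix $h \in \fH$ and $r, s \in \R$; I want to show $1 - rhs$ is invertible in $\R$. By the first claim, the image of $h$ in each $\R/\I$ lies in $H(\R/\I)$, so the image of $1 - rhs$ in each $\R/\I$ is a two-sided unit. Let $u_\I \in \R/\I$ be its (necessarily unique) two-sided inverse. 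Ring homomorphisms preserve inverses when they exist, so the family $(u_\I)$ is compatible with the transition maps $\R/\I' \to \R/\I$ for $\I \supset \I'$, yielding an element $u \in \varprojlim_\I \R/\I = \R$. Passing to the limit in the equations $u_\I \cdot \overline{1 - rhs} = 1 = \overline{1 - rhs} \cdot u_\I$ shows that $u$ is a two-sided inverse of $1 - rhs$ in $\R$. Hence $h \in H$, completing the proof that $\fH = H$; and since $\fH$ is closed in $\R$ by Lemma~\ref{topological-jacobson-twosided}, so is $H$.

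I do not anticipate a serious obstacle; the only point requiring mild care is the uniqueness of two-sided inverses in each discrete quotient, which is what makes the family $(u_\I)$ automatically compatible and thus ensures the inverse lifts to the projective limit. The argument relies crucially on the assumption that the base of neighborhoods of zero consists of two-sided (not merely right) ideals, so that $\R = \varprojlim_\I \R/\I$ is literally a projective limit of rings.
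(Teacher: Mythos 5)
Your proof is correct, and it differs from the paper's in a way worth noting. For the first assertion, the paper invokes the characterization of Lemma~\ref{topological-jacobson-characterized}(iii)--(iv), observing that it suffices to test the condition $(1-hr)\R+\I=\R$ on open \emph{two-sided} ideals $\I$ and recognizing this as the statement that $\bar h\bar r$ lies in $H(\R/\I)$; your lattice-correspondence argument (open maximal right ideals containing $\I$ $\leftrightarrow$ maximal right ideals of $\R/\I$, hence $\fH=\bigcap_\I p_\I^{-1}(H(\R/\I))$) reaches the same identification more directly from the definition of $\fH$ as an intersection, and is if anything more transparent. The more substantive difference is in the second assertion: the paper simply cites \cite[Theorem~3.8\,(2--3)]{IMR}, whereas you give a self-contained proof by lifting the two-sided inverse of $1-rhs$ through the projective limit $\R=\varprojlim_\I\R/\I$, using uniqueness of two-sided inverses to get compatibility of the family $(u_\I)$ and separatedness of $\R$ to conclude that the limit element is genuinely an inverse. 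This is exactly the right mechanism (and is essentially the argument behind the cited result); your closing remark correctly identifies that the whole approach hinges on the base of neighborhoods of zero consisting of two-sided ideals, so that $\R$ is literally a projective limit of discrete rings. No gaps.
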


\begin{proof}
 The projective limit of Jacobson radicals of the rings $R$ is
well-defined, because for any surjective morphism of discrete
associative rings $f\:R'\rarrow R''$ one has $f(H(R'))\subset H(R'')$.
 To see that this projective limit coincides with the topological
Jacobson radical $\fH\subset\R$, one can use the characterization
provided by Lemma~\ref{topological-jacobson-characterized}(iii) or~(iv),
observing that it suffices to consider open two-sided ideals
$\I\subset\R$ in this characterization whenever such ideals form
a base of neighborhoods of zero in~$\R$.
 For the proof of the second assertion of the lemma we refer
to~\cite[Theorem~3.8\,(2\+-3)]{IMR}.
\end{proof}

\begin{rem}
 The reader should be warned that the assertion
of~\cite[Theorem~3.8\,(4)]{IMR} is erroneous.
 While it is correctly pointed out in~\cite[proof of
Theorem~3.8\,(4)]{IMR} that $\fH$ is the kernel of the natural ring
homomorphism
\begin{equation} \label{IMR-map}
 \R=\varprojlim\nolimits_{\I\subset\R}\R/\I\lrarrow
 \varprojlim\nolimits_{\I\subset\R}(\R/\I)/(H(\R/\I)),
\end{equation}
the map~\eqref{IMR-map} need not be surjective; not even for complete,
separated commutative rings $\R$ with a countable base of neighborhoods
of zero consisting of open ideals.
 Simply put, the problem is that the passage to projective limit
does not preserve surjectivity of homomorphisms of abelian groups.

 Here is a specific counterexample.
 Let $k$ be a field and $L$ be a commutative local $k$\+algebra with
a maximal ideal $m\subset L$ such that $L=k\oplus m$.
 Assume further that $L$ has no nonzero nilpotent elements.
 Notice that, for any commutative ring $K$ without nilpotent elements,
the Jacobson radical of the polynomial ring $K[x]$ vanishes.
 Consider the ring $E=L[x_1,x_2,x_3,\dotsc]$ of polynomials in
a countable set of variables $x_i$, \,$i\ge1$, with coefficients in
the ring~$L$.
 Denote by $E_n=L[x_n,x_{n+1},x_{n+2},\dotsc]=
E/(x_1,\dots,x_{n-1})$ the ring of polynomials in the variables
$x_{n+i}$, \,$i\ge0$, which we view as the quotient ring of the ring $E$
by the ideal generated by $x_1$,~\dots, $x_{n-1}$.

 Let $R$ be the following ring of sequences of polynomials
$(f_1,f_2,f_3,\dotsc)$.
 For every $n\ge1$, we have $f_n\in E_n$.
 The elements of the ring $R$ are all the \emph{eventually
$k$\+sequences} of polynomials $(f_1,f_2,f_3,\dots)$, i.~e., all
such sequences for which $f_n\in k$ for $n$~large enough.
 The addition and multiplication operations are performed on
the sequences of polynomials $(f_1,f_2,f_3,\dotsc)$ termwise; so $R$
is a subring in $\prod_{n\ge1}E_n$.
 Furthermore, let $R'\supset R$ be the ring of \emph{eventually
$L$\+sequences} of polynomials $(f_n\in E_n)_{n\ge1}$, i.~e., all such
sequences for which $f_n\in L$ for $n$~large enough.

 Let $I_j\subset R$ be the ideal consisting of all sequences of
polynomials $(f_1,f_2,f_3,\dotsc)$ such that, for every $n\ge1$,
the polynomial $f_n\in E_n$ belongs to the ideal generated by all
the elements~$x_i$ with $i\ge j$ (or equivalently,
$i\ge\operatorname{max}(n,j)$).
 In other words, one can say that $f_n$~belongs to the image of
the ideal $(x_j,x_{j+1},x_{j+2},\dotsc)\subset E$ under the natural
surjective ring homomorphism $E\rarrow E_n$.
 This condition implies that for every sequence $(f_1,f_2,f_3,\dotsc)
\in I_j$ one has $f_n=0$ for $n$~large enough.
 Obviously, we have $R'\supset R\supset I_1\supset I_2\supset I_3\supset
\dotsb$; and $I_j$ is an ideal in $R'$ as well as in~$R$.
 Denote by $\R$ and $\R'$, respectively, the completions of the rings
$R$ and $R'$ with respect to their topologies in which the ideals $I_j$,
\,$j\ge1$, form a base of neighborhoods of zero.
 Clearly, $\R$ is an open subring in~$\R'$.
 
 The quotient ring $R/I_j$ is the ring of all eventually $k$\+sequences
$(\bar f_1,\bar f_2,\bar f_3,\dotsc)$, where
$\bar f_n\in L[x_n,\dotsc,x_{j-1}]$ for $n<j$, \ $\bar f_n\in L$
for $n\ge j$, and $\bar f_n\in k$ for $n$~large enough.
 Therefore, the Jacobson radical $H(R/I_j)$ consists of all sequences
$(\bar h_1,\bar h_2,\bar h_3,\dotsc)$, where $\bar h_n=0$ for
$n<j$, \ $\bar h_n\in m$ for $n\ge j$, and $\bar h_n=0$ for $n$~large
enough.
 So one has $H(\R)=\fH(\R)=\varprojlim_{j\ge1}H(R/I_j)=0$.
 The key observation is that the derived projective limit
$\varprojlim^1_{j\ge1}H(R/I_j)=\varprojlim^1_{j\ge1}
\bigoplus_{n\ge j} m\cong\prod_{n\ge1}m\big/
\bigoplus_{n\ge1}m$ does not vanish.
 Furthermore, $R'/I_j$ is the ring of all sequences
$(\bar f_1,\bar f_2,\bar f_3,\dotsc)$, where $\bar f_n\in
L[x_n,\dotsc,x_{j-1}]$ for $n<j$ and $\bar f_n\in L$ for $n\ge j$.
 The Jacobson radical $H(R'/I_j)$ consists of all sequences
$(\bar h_1,\bar h_2,\bar h_3,\dotsc)$ with $\bar h_n=0$ for
$n<j$ and $\bar h_n\in m$ for $n\ge j$.
 So $H(\R')=\fH(\R')=\varprojlim_{j\ge1}H(R'/I_j)=0$ and
also $\varprojlim^1_{j\ge1}H(R'/I_j)=\varprojlim^1_{j\ge1}
\prod_{n\ge j}m=0$.
 Notice the isomorphism of $k$\+vector spaces $\varprojlim^1_{j\ge1}
H(R/I_j)\cong R'/R=\R'/\R$.
 
 It is straightforward to compute that the right-hand side
of~\eqref{IMR-map}, viewed as an abstract ring, is isomorphic
to~$\R'$.
 The map~\eqref{IMR-map} is the ring monomorphism $\R\rarrow\R'$,
and it is not an isomorphism.
 The projective limit topology on the right-hand side
of~\eqref{IMR-map}, however, is different from the topology on
the ring $\R'$ arising from the above construction (indeed,
the image of~\eqref{IMR-map} is dense in the projective limit
topology on the right-hand side).

 Now we return to the general case of a complete, separated
topological ring $\R$ with a base of neighborhoods of zero formed
by open two-sided ideals.
 The map~\eqref{IMR-map} has to be distinguished from the ring
homomorphism
\begin{equation} \label{quotient-by-Jacobson}
 \R=\varprojlim\nolimits_{\I\subset\R}\R/\I\lrarrow
 \varprojlim\nolimits_{\I\subset\R}\R/(\I+\fH).
\end{equation}
 In fact, there is a natural surjective homomorphism of discrete rings
$\R/(\I+\fH)\rarrow(\R/\I)/(H(\R/\I))$, but it is not always
an isomorphism, because the map of Jacobson radicals $\fH(R)\rarrow
H(R/\I)$ need not be surjective.
 So we have a commutative triangle diagram of continuous homomorphisms
of topological rings
$$
 \R\lrarrow\varprojlim\nolimits_{\I\subset\R}\R/(\I+\fH)
 \lrarrow\varprojlim\nolimits_{\I\subset\R}(\R/\I)/(H(\R/\I)).
$$

 The right-hand side of~\eqref{quotient-by-Jacobson} is the completion
of the quotient ring $\R/\fH$ in its quotient topology.
 The kernel of the map~\eqref{quotient-by-Jacobson} is the Jacobson
radical $\fH\subset\R$.
 The map~\eqref{quotient-by-Jacobson} is surjective when the topological
ring $\R$ has a countable base of neighborhoods of zero (see
the discussion in Sections~\ref{prelim-quotient-topologies}
and~\ref{prelim-strongly-closed-subgroups}\+-%
\ref{prelim-strongly-closed-ideals}).
 We do \emph{not} know whether the ring
homomorphism~\eqref{quotient-by-Jacobson} is surjective in general.
\end{rem}

\Section{Products of Topological Rings} \label{products-secn}

 Let $\Gamma$ be a set and $(A_\gamma)_{\gamma\in\Gamma}$ be a family
of topological abelian groups, each of them with a base of
neighborhoods of zero $B_\gamma$ consisting of open subgroups.
 The \emph{product topology} on the Cartesian product
$A=\prod_{\gamma\in\Gamma}A_\gamma$ has a base of neighborhoods of zero
formed by the subgroups $\prod_{\delta\in\Delta}U_\delta\times
\prod_{\gamma\in\Gamma\setminus\Delta}A_\gamma$, where
$\Delta\subset\Gamma$ are finite subsets and $U_\delta\in B_\delta$.
 The topological group $A=\prod_{\gamma\in\Gamma}A_\gamma$ does not
depend on the choice of bases of neighborhoods of zero $B_\gamma$
in topological groups~$A_\gamma$.
 When the topological groups $A_\gamma$ are separated, so is
the topological group $\prod_{\gamma\in\Gamma}A_\gamma$.

 If $A'_\gamma\subset A_\gamma$ are subgroups in topological abelian
groups $A_\gamma$ and $A'_\gamma$ are viewed as topological abelian
groups in the induced topology, then the product topology on
$\prod_{\gamma\in\Gamma}A'_\gamma$ coincides with the induced
topology on $\prod_{\gamma\in\Gamma}A'_\gamma\subset
\prod_{\gamma\in\Gamma}A_\gamma$.
 When the subgroups $A'_\gamma$ are closed in $A_\gamma$, so is
the subgroup $\prod_{\gamma\in\Gamma}A'_\gamma\subset
\prod_{\gamma\in\Gamma}A_\gamma$.
 If the quotient groups $A''_\gamma=A_\gamma/A'_\gamma$ are viewed as
topological abelian groups in the quotient topology, then
the product topology on $A''=\prod_{\gamma\in\Gamma}A''_\gamma$ coincides
with the quotient topology on $A''=\prod_{\gamma\in\Gamma}A_\gamma
\big/\prod_{\gamma\in\Gamma}A'_\gamma$.

 Let $(\A_\gamma)_{\gamma\in\Gamma}$ be a family of complete, separated
topological abelian groups.
 Then the group $\A=\prod_{\gamma\in\Gamma}\A_\gamma$ is complete and
separated in the product topology.
 Moreover, for any set $X$ there is a natural isomorphism of
abelian groups
$$
 \A[[X]]\cong\prod\nolimits_{\gamma\in\Gamma}\A_\gamma[[X]].
$$
 If $\fH_\gamma\subset\A_\gamma$ are strongly closed subgroups
then $\fH=\prod_{\gamma\in\Gamma}\fH_\gamma$ is a strongly closed
subgroup in $\A=\prod_{\gamma\in\Gamma}\A_\gamma$
(in the sense of Section~\ref{prelim-strongly-closed-subgroups}).

 If $(R_\gamma)_{\gamma\in\Gamma}$ is a family of topological rings
(with or without unit), then $R=\prod_{\gamma\in\Gamma}R_\gamma$ is
a topological ring (with or without unit, respectively) in
the product topology.
 If each of the topological rings $R_\gamma$ has a base of
neighborhoods of zero formed by open right (resp., two-sided)
ideals, then the ring $R$ also has a base of neighborhoods of
zero formed by open right (resp., two-sided) ideals.
 If $H_\gamma$ are topologically nil (resp., topologically left
T\+nilpotent) separated topological rings without unit, then their
product $H=\prod_{\gamma\in\Gamma}H_\gamma$ in its product topology is
also a topologically nil (resp., topologically left T\+nilpotent)
topological ring without unit.

 The following lemma is the main result of this section.
 Part~(a) is an easy version of part~(b), which is a generalization
of~\cite[Lemma~A.2.2]{Psemi} (see also~\cite[Theorem~4.5]{Sh}).

\begin{lem} \label{product-of-rings-lemma}
 Let $(\R_\gamma)_{\gamma\in\Gamma}$ be a family of complete, separated
topological rings, each of them having a base of neighborhoods of
zero formed by open right ideals; and let\/ $\R=\prod_{\gamma\in
\Gamma}\R_\gamma$ be their product.
 Then \par
\textup{(a)} the coproduct functor $(\N_\gamma)_{\gamma\in\Gamma}
\longmapsto\bigoplus_{\gamma\in\Gamma}\N_\gamma$ establishes
an equivalence between the Cartesian product of the abelian
categories of discrete right\/ $\R_\gamma$\+modules over all\/
$\gamma\in\Gamma$ and the abelian category of discrete right\/
$\R$\+modules; \par
\textup{(b)} the product functor $(\C_\gamma)_{\gamma\in\Gamma}
\longmapsto\prod_{\gamma\in\Gamma}\C_\gamma$ establishes an equivalence
between the Cartesian product of the abelian categories of left\/
$\R_\gamma$\+contramodules over all\/ $\gamma\in\Gamma$ and
the abelian category of left\/ $\R$\+contramodules.
\end{lem}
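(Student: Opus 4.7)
The essential tool in both parts is the family of pairwise orthogonal central idempotents $e_\gamma\in\R$ defined by $(e_\gamma)_\delta=\delta_{\gamma\delta}\cdot 1$, which satisfy $e_\gamma e_\delta=\delta_{\gamma\delta}e_\gamma$, form a $\Gamma$-indexed family converging to zero in the product topology on $\R$ (every standard basic neighborhood of zero excludes only finitely many $e_\gamma$), and yet have $\sum_\gamma e_\gamma=1$ in $\R$ in the sense that the partial sums over finite $\Delta\subset\Gamma$ form a net converging to the unit.

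For part~(a): given discrete right $\R_\gamma$-modules $\N_\gamma$, the coproduct $\bigoplus_\gamma\N_\gamma$ carries the coordinatewise $\R$-action, and is discrete since for $b\in\N_\gamma$ one has $\Ann_\R(b)\supset\Ann_{\R_\gamma}(b)\times\prod_{\delta\ne\gamma}\R_\delta$, which is open. Conversely, any discrete right $\R$-module $\N$ decomposes as $\bigoplus_\gamma\N e_\gamma$: for each $b\in\N$, discreteness supplies a finite $\Delta\subset\Gamma$ with $b\cdot\prod_{\gamma\notin\Delta}\R_\gamma=0$, so $b=b\sum_{\delta\in\Delta}e_\delta$ lies in the sum of the finitely many $\N e_\delta$ with $\delta\in\Delta$. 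Each summand $\N e_\gamma$ inherits a discrete right $\R_\gamma$-module structure via $\R_\gamma\cong e_\gamma\R e_\gamma$, and functoriality in both directions is immediate.

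For part~(b): use the identification $\R[[X]]\cong\prod_\gamma\R_\gamma[[X]]$ already recorded in this section. Given left $\R_\gamma$-contramodules $(\C_\gamma)$, endow $\C=\prod_\gamma\C_\gamma$ with the left $\R$-contraaction whose $\gamma$-component is the composite
\[
 \R[[\C]]\cong\prod\nolimits_\delta\R_\delta[[\C]]\lrarrow\R_\gamma[[\C]]
 \lrarrow\R_\gamma[[\C_\gamma]]\lrarrow\C_\gamma,
\]
where the last two maps are induced by the projection $\pi_\gamma\:\C\to\C_\gamma$ and the $\R_\gamma$-contraaction on $\C_\gamma$. Contraunitality and contraassociativity check componentwise. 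For the essentially inverse functor, given $\C\in\R\contra$, set $\C_\gamma:=e_\gamma\C$ (left $\R_\gamma$-contramodule via $\R_\gamma\cong e_\gamma\R e_\gamma$), and define $\Psi\:\prod_\gamma\C_\gamma\to\C$ by $\Psi((c_\gamma)):=\pi_\C(\R[[\sigma]](\sum_\gamma e_\gamma\gamma))$, where $\sigma\:\Gamma\to\C$, $\gamma\mapsto c_\gamma$; this is well-defined precisely because $(e_\gamma)_\gamma$ converges to zero in~$\R$. Writing $\mathbf x=\R[[\sigma]](\sum_\gamma e_\gamma\gamma)$, the identity $\Phi\circ\Psi=\id$ (where $\Phi(c)=(e_\gamma c)_\gamma$) reduces via $\R$-linearity of $\pi_\C$ in coefficients (a direct consequence of contraassociativity) to $e_\delta\pi_\C(\mathbf x)=\pi_\C(e_\delta\mathbf x)=\pi_\C(e_\delta c_\delta)=e_\delta c_\delta=c_\delta$. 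For $\Psi\circ\Phi=\id$, take $\sigma(\gamma)=e_\gamma c$ and build a two-layer element $\tilde{\mathbf x}\in\R[[\R[[\C]]]]$ with coefficient $e_\gamma$ at the element $e_\gamma c\in\R[[\C]]$ (i.e.\ coefficient $e_\gamma$ at $c$ in the inner sum). Then $\R[[\pi_\C]](\tilde{\mathbf x})=\mathbf x$, while the opening-of-parentheses computation gives
\[
 \phi_\C(\tilde{\mathbf x})=\Bigl(\sum\nolimits_\gamma e_\gamma e_\gamma\Bigr)\cdot c
 =\Bigl(\sum\nolimits_\gamma e_\gamma\Bigr)\cdot c=1\cdot c,
\]
using $e_\gamma^2=e_\gamma$ and $\sum_\gamma e_\gamma=1$ in~$\R$. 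The contraassociativity equation $\pi_\C\circ\R[[\pi_\C]]=\pi_\C\circ\phi_\C$ combined with contraunitality then yields $\pi_\C(\mathbf x)=c$, as required.

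The main delicacy is the careful handling of these convergent formal linear combinations indexed by the potentially uncountable set~$\Gamma$: one must verify that $\sum_\gamma e_\gamma\gamma$ really lives in $\R[[\Gamma]]$, that the pushforward maps $\R[[\sigma]]$ and the monad multiplication $\phi_\C$ commute with the infinite summations in the expected way, and that $\sum_\gamma e_\gamma^2=\sum_\gamma e_\gamma$ converges to $1$ in the relevant topology of~$\R$. Once the bijection $\Phi\leftrightarrow\Psi$ is established, full faithfulness of the product functor is automatic: any $\R$-contramodule morphism $f\:\prod_\gamma\C_\gamma\to\prod_\gamma\D_\gamma$ commutes with multiplication by each central idempotent $e_\gamma$, hence restricts to $\R_\gamma$-contramodule morphisms $f_\gamma\:\C_\gamma\to\D_\gamma$, and $f$ is recovered uniquely from $(f_\gamma)_\gamma$ as a morphism into the product.
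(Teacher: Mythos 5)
Your proposal is correct and follows essentially the same route as the paper's proof: both parts rest on the orthogonal central idempotents $e_\gamma$, the decomposition $\N=\bigoplus_\gamma\N e_\gamma$ obtained from discreteness in part~(a), and in part~(b) the inverse map $(c_\gamma)_\gamma\longmapsto\pi_\C\bigl(\sum_\gamma e_\gamma c_\gamma\bigr)$, well-defined because the family $(e_\gamma)_\gamma$ converges to zero in~$\R$, with both composite identities verified via contraassociativity and the convergence $\sum_\gamma e_\gamma=1$. Your two-layer element $\tilde{\mathbf x}$ merely makes explicit the same contraassociativity computation the paper performs.
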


\begin{proof}
 Part~(a): for every $\gamma\in\Gamma$, denote by
$e_\gamma=(e_{\gamma,\gamma'})_{\gamma'\in\Gamma}\in\R$ the central
idempotent element whose $\gamma'$\+component $e_{\gamma,\gamma'}$
is equal to $0\in\R_{\gamma'}$ for all $\gamma'\in\Gamma$,
$\gamma'\ne\gamma$, and whose $\gamma$\+component $e_{\gamma,\gamma}$
is equal to $1\in\R_\gamma$.
 For any discrete right $\R$\+module $\N$, the subgroup
$\N e_\gamma\subset\N$ is the maximal $\R$\+submodule in $\N$
whose right $\R$\+module structure comes from a (discrete)
right $\R_\gamma$\+module structure via the natural continuous
ring homomorphism $p_\gamma\:\R\rarrow\R_\gamma$.
 So, in the notation of Sections~\ref{prelim-change-of-scalars}
and~\ref{prelim-strongly-closed-ideals}, we have
$p_\gamma^\diamond(\N)=\N e_\gamma$.
 We claim that the functor
$\N\longmapsto(\N_\gamma=\N e_\gamma)_{\gamma\in\Gamma}$ is quasi-inverse
to the functor $(\N_\gamma)_{\gamma\in\Gamma}\longmapsto
\N=\bigoplus_{\gamma\in\Gamma}p_\gamma{}_\diamond\N_\gamma$, where
$\N\in\discr\R$ and $\N_\gamma\in\discr\R_\gamma$.
 In other words, this simply means that any discrete right
$\R$\+module $\N$ is the direct sum of its submodules
$\N e_\gamma\subset\N$.

 Indeed, the idempotents $e_\gamma\in\R$, $\gamma\in\Gamma$ are
orthogonal to each other, which easily implies injectivity of
the map $\bigoplus_{\gamma\in\Gamma}\N e_\gamma\rarrow\N$.
 To prove surjectivity, consider an element $b\in\N$.
 Since $\N$ is a discrete right $\R$\+module by assumption,
there exists a neighborhood of zero $\U\subset\R$ such that $b\U=0$.
 By the definition of the product topology, there exists
a finite subset $\Delta\subset\Gamma$ such that
$\J=\prod_{\gamma\in\Gamma\setminus\Delta}\R_\gamma\subset\U\subset\R$.
 Consider the submodule $\N_\J\subset\N$ of all elements
annihilated by the closed two-sided ideal $\J\subset\R$;
then we have $b\in\N_\J$.
 Now we have $1-\sum_{\delta\in\Delta}e_\delta\in\J$, hence
$b=\sum_{\delta\in\Delta}be_\delta$ is a decomposition of
the element~$b$ into the sum of elements $be_\delta\in\N e_\delta$.

 Part~(b): we keep our notation for the central idempotent elements
$e_\gamma\in\R$.
 For any left $\R$\+contramodule $\C$, the map $e_\gamma\:\C\rarrow
e_\gamma\C$ represents $e_\gamma\C$ as a quotient group of~$\C$.
 This is the maximal quotient $\R$\+contramodule of $\C$ whose
left $\R$\+contramodule structure comes from a left
$\R_\gamma$\+contramodule structure via the homomorphism~$p_\gamma$.
 So, in the notation of Sections~\ref{prelim-change-of-scalars}
and~\ref{prelim-strongly-closed-ideals}, we have
$p_\gamma^\sharp(\C)=e_\gamma\C$.
 We claim that the functor
$\C\longmapsto(\C_\gamma=e_\gamma\C)_{\gamma\in\Gamma}$ is quasi-inverse
to the functor $(\C_\gamma)_{\gamma\in\Gamma}\longmapsto
\C=\prod_{\gamma\in\Gamma}p_\gamma{}_\sharp\C_\gamma$, where
$\C\in\R\contra$ and $\C_\gamma\in\R_\gamma\contra$.
 In other words, this simply means that the natural map
$$
 e=(e_\gamma)_{\gamma\in\Gamma}\:\C\lrarrow
 \prod\nolimits_{\gamma\in\Gamma}e_\gamma\C
$$
is an isomorphism for any left $\R$\+contramodule~$\C$.

 Indeed, let us construct an inverse map to~$e$.
 Given a family of elements $c_\gamma\in e_\gamma\C$, we consider
them as elements of $\C$ and assign to them the element
$$
 f((c_\gamma)_{\gamma\in\Gamma})=
 \pi_\C\left(\sum\nolimits_{\gamma\in\Gamma}e_\gamma c_\gamma\right).
$$
 Here it is important that the family of central idempotent elements
$e_\gamma\in\R$ converges to zero in the topology of $\R$, so
the expression $\sum_{\gamma\in\Gamma}e_\gamma c_\gamma$ defines
an element of the set $\R[[\C]]$ of all convergent infinite formal
linear combinations of elements of $\C$ with the coefficients
in $\R$ (to which the contraaction map $\pi_\C\:\R[[\C]]\rarrow\C$
can be applied).
 To check that $e\circ f=\id$, it suffices to compute, for any
family of elements $(c_\gamma\in\C)_{\gamma\in\Gamma}$ and any fixed
element $\gamma'\in\Gamma$,
$$
 e_{\gamma'}\pi_\C\left(\sum\nolimits_{\gamma\in\Gamma}e_\gamma c_\gamma
 \right) = \pi_\C\left(\sum\nolimits_{\gamma\in\Gamma}e_{\gamma'}e_\gamma
 c_\gamma\right) = e_{\gamma'}c_{\gamma'}
$$
using the contraassociativity equation.
 To check that $f\circ e=\id$, one computes, for any element
$c\in\C$,
$$
 \pi_\C\left(\sum\nolimits_{\gamma\in\Gamma}e_\gamma(e_\gamma c)\right)
 = \left(\sum\nolimits_{\gamma\in\Gamma}e_\gamma\right)c = c
$$
by the contraassociativity equation and because the infinite sum
$\sum_{\gamma\in\Gamma}e_\gamma$ converges to~$1$ in the topology
of~$\R$.
\end{proof}

\Section{Projectivity of Flat Contramodules}
\label{projectivity-of-flats-secn}

 In this section and in the next one, we consider
the following setting.
 Let $\R$ be a complete, separated topological associative ring
with a base of neighborhoods of zero formed by open right ideals.
 Let $\fH\subset\R$ be a strongly closed two-sided ideal in $\R$
(see Sections~\ref{prelim-strongly-closed-subgroups}\+-%
\ref{prelim-strongly-closed-ideals}).
 Assume that the quotient ring $\S=\R/\fH$ is isomorphic, as
a topological ring, to the product $\prod_{\gamma\in\Gamma} S_\gamma$
of a family of discrete rings $S_\gamma$ (viewed as a topological
ring in the product topology), and that every ring $S_\gamma$ is
a classically simple (i.~e., simple Artinian) ring.
 In other words, $S_\gamma$ is the matrix ring of some finite order
over a division ring (for every~$\gamma$).
 Finally, we will also assume that the ideal $\fH$ is topologically
left T\+nilpotent.

\begin{lem} \label{section-8-H-is-the-radical}
 In the above assumptions, $\fH$ is the topological Jacobson radical
of the topological ring\/ $\R$, and\/ $\fH$ coincides with
the Jacobson radical $H$ of the ring\/ $\R$ viewed as an abstract ring
(without the topology).
\end{lem}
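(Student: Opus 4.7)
My plan is to prove $\fH\subset\J\subset\fH$, where $\J$ denotes the topological Jacobson radical of $\R$, and then separately deduce $\J=H$.

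For $\fH\subset\J$: since $\fH$ is a topologically left T\+nilpotent left ideal, Lemma~\ref{discrete-module-Nakayama} gives $\N_\fH\neq0$ for every nonzero discrete right $\R$\+module~$\N$. Specializing to a simple discrete $\N$ (which has no nonzero proper submodules, so $\N_\fH=\N$) forces $\N\fH=0$. By the characterization in Lemma~\ref{topological-jacobson-characterized}(ii) this yields $\fH\subset\J$.

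For $\J\subset\fH$, I plan to apply Lemma~\ref{topological-jacobson-for-quotient-ring}(b) with $\fK:=\fH$. The hypotheses hold: $\S=\R/\fH$ is complete, being isomorphic as a topological ring to the product $\prod_\gamma S_\gamma$ of discrete rings (cf.\ Section~\ref{products-secn}), and the preceding step gives $\fH\subset\J$. This lemma yields $\J=p^{-1}(\J(\S))$, where $p\:\R\to\S$ is the projection and $\J(\S)$ denotes the topological Jacobson radical of~$\S$. So it suffices to check $\J(\S)=0$. By Lemma~\ref{product-of-rings-lemma}(a), every simple discrete right $\S$\+module is a simple right $S_{\gamma_0}$\+module pulled back along the projection $\S\to S_{\gamma_0}$ for exactly one $\gamma_0\in\Gamma$; the annihilator in $\S$ of such a module is $\prod_{\gamma\neq\gamma_0}S_\gamma\times M_{\gamma_0}$, where $M_{\gamma_0}\subset S_{\gamma_0}$ is a maximal right ideal, and these are precisely the open maximal right ideals of~$\S$. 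Since each $S_\gamma$ is simple Artinian and therefore semisimple, $J(S_\gamma)=0$; intersecting over all such ideals factor-by-factor gives $\J(\S)=\prod_\gamma J(S_\gamma)=0$, whence $\J=p^{-1}(0)=\fH$.

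Finally, applying topological left T\+nilpotence of $\fH$ to the constant sequence $a_n=a$ shows $a^n\to0$ in $\R$ for every $a\in\fH$, so $\fH$ is topologically nil. Since $\fH=\J$ is then a topologically nil ideal coinciding with the topological Jacobson radical, Lemma~\ref{T-nil-ideal-and-two-jacobson-radicals-lemma}(b) immediately yields $H=\fH$. The main obstacle in this plan is the computation $\J(\S)=0$: it requires correctly identifying the open maximal right ideals of the product topological ring $\prod_\gamma S_\gamma$, which rests on the factorwise decomposition of discrete right modules over a product ring provided by Lemma~\ref{product-of-rings-lemma}(a). The remaining ingredients are direct applications of the lemmas established in Sections~\ref{t-nilpotent-secn} and~\ref{topological-jacobson-secn}.
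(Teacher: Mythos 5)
Your proof is correct and follows essentially the same route as the paper: show the topological Jacobson radical of $\S=\prod_\gamma S_\gamma$ vanishes via Lemma~\ref{topological-jacobson-characterized}(ii), deduce $\J(\R)=p^{-1}(0)=\fH$ from Lemma~\ref{topological-jacobson-for-quotient-ring}(b), and identify $H$ with $\fH$ via Lemma~\ref{T-nil-ideal-and-two-jacobson-radicals-lemma}. The only (harmless) variation is that you obtain the containment $\fH\subset\J$ from the discrete-module Nakayama Lemma~\ref{discrete-module-Nakayama}, whereas the paper gets $\fH\subset H\subset\J$ in one stroke from part~(a) of Lemma~\ref{T-nil-ideal-and-two-jacobson-radicals-lemma}, which also disposes of the final equality $H=\fH$ without a separate appeal to part~(b).
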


\begin{proof}
 One observes that any nonzero element of $\S$ acts nontrivially in
a certain simple discrete right $\S$\+module,
so by Lemma~\ref{topological-jacobson-characterized}(ii)
the topological Jacobson radical of the ring $\S$ vanishes.
 It remains to use
Lemmas~\ref{T-nil-ideal-and-two-jacobson-radicals-lemma}(a)
and~\ref{topological-jacobson-for-quotient-ring}(b).
\end{proof}

 Denote the natural continuous ring homomorphisms by
$p\:\R\rarrow\S$, \ $q_\gamma\:\S\rarrow S_\gamma$, and
$p_\gamma=q_\gamma p\:\R\rarrow S_\gamma$.
 Set $\J_\gamma=\ker(p_\gamma)\subset\R$.
 Recall that, according to the discussion in
Sections~\ref{prelim-change-of-scalars}
and~\ref{prelim-strongly-closed-ideals}, the fully faithful
functor of contrarestriction of scalars $p_\sharp\:\S\contra\rarrow
\R\contra$ has a left adjoint functor of contraextension of
scalars $p^\sharp\:\R\contra\rarrow\S\contra$ computable as
$p^\sharp(\C)=\C/\fH\tim\C$.
 Similarly, the fully faithful functor $p_\gamma{}_\sharp\:S_\gamma\modl
=S_\gamma\contra\rarrow\R\contra$ has a left adjoint functor
$p_\gamma^\sharp\:\R\contra\rarrow S_\gamma\modl$ computable as
$p_\gamma^\sharp(\C)=\C/\J_\gamma\tim\C$.
 The fully faithful functor $q_\gamma{}_\sharp\:S_\gamma\modl\rarrow
\S\contra$ has a left adjoint functor $q_\gamma^\sharp\:\S\contra\rarrow
S_\gamma\modl$, which can be computed in the same fashion.

 Finally, according to Lemma~\ref{product-of-rings-lemma}(b), for
any left $\S$\+contramodule $\D$ we have a natural direct product
decomposition $\D\cong\prod_{\gamma\in\Gamma}
q_\gamma{}_\sharp q_{\gamma}^\sharp\D$.
 So, in particular, for any left $\R$\+contramodule $\C$ one has
$\C/\fH\tim\C\cong\prod_{\gamma\in\Gamma}\C/\J_\gamma\tim\C$.

 The analogous assertions hold for discrete right modules.
 The fully faithful functor of restriction of scalars
$p_\diamond\:\discr\S\rarrow\discr\R$ has a right adjoint functor of
coextension of scalars $p^\diamond\:\discr\R\rarrow\discr\S$ computable
as $p^\diamond(\N)=\N_\fH$.
 The fully faithful functor $p_\gamma{}_\diamond\:\modr S_\gamma
=\discr S_\gamma\rarrow\discr\R$ has a right adjoint functor
$p_\gamma^\diamond\:\discr\R\rarrow\modr S_\gamma$ computable as
$p_\gamma^\diamond(\N)=\N_{\J_\gamma}$.
 The fully faithful functor $q_\gamma{}_\diamond\:\modr S_\gamma\rarrow
\discr\S$ has a right adjoint functor $q_\gamma^\diamond\:\discr\S
\rarrow\modr S_\gamma$, which can be computed similarly.
 Finally, by Lemma~\ref{product-of-rings-lemma}(a), for any discrete
right $\S$\+module $\M$ we have a natural direct sum decomposition
$\M\cong\bigoplus_{\gamma\in\Gamma}
q_\gamma{}_\diamond q_\gamma^\diamond\M$; so, in particular,
for any discrete right $\R$\+module $\N$ one has
$\N_\fH\cong\bigoplus_{\gamma\in\Gamma}\N_{\J_\gamma}$.

\begin{lem} \label{flat-reduction-lemma}
 Let\/ $\R$ be a complete, separated topological associative ring
with a base of neighborhoods of zero formed by open right ideals,
and let\/ $\fH\subset\R$ be a topologically left T\+nilpotent strongly
closed two-sided ideal.
 Let $f\:\F'\rarrow\F''$ be a morphism of flat left\/
$\R$\+contramodules such that the induced morphism of left\/
$\S$\+contramodules $\F'/\fH\tim\nobreak\F'\rarrow\F''/\fH\tim\F''$
is an isomorphism.
 Then the morphism~$f$ is surjective and its kernel is contained in\/
$\bigcap_{\I\subset\R}\I\tim\F'\subset\F'$, where the intersection is
taken over all the open right ideals\/ $\I\subset\R$.
\end{lem}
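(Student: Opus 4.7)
The plan separates naturally into two arguments. Surjectivity is immediate from the contramodule Nakayama lemma applied to the cokernel: set $\F''' = \coker(f)$, and apply the right exact functor $\C \mapsto \C/\fH\tim\C$ to the exact sequence $\F' \to \F'' \to \F''' \to 0$; combined with the hypothesis that $\bar f$ is an isomorphism, this yields $\F'''/\fH\tim\F''' = 0$, i.e., $\fH\tim\F''' = \F'''$. Since $\fH$ is a closed left ideal in $\R$ and is topologically left T\+nilpotent, Lemma~\ref{contramodule-Nakayama} forces $\F''' = 0$, so $f$ is surjective.

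For the kernel containment, fix an open right ideal $\I \subset \R$; I will prove that the induced map $\R/\I\ocn_\R f : \F'/\I\tim\F' \to \F''/\I\tim\F''$ is an isomorphism, which immediately yields $\ker(f) \subset \I\tim\F'$ and, upon intersecting over all such $\I$, the desired statement. By (the second part of the proof of) Lemma~\ref{discrete-module-Nakayama}, topological left T\+nilpotence of $\fH$ furnishes, for the discrete right $\R$\+module $\N = \R/\I$, a transfinite filtration
$$
 0 = F_0\N \subset F_1\N \subset \dotsb \subset F_\alpha\N = \N
$$
by discrete right $\R$\+submodules, with each successive quotient $\M_i = F_{i+1}\N/F_i\N$ annihilated by $\fH$ (hence a discrete right $\S$\+module) and $F_j\N = \varinjlim_{i<j} F_i\N$ at limit ordinals~$j$. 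I claim, by transfinite induction on $i$, that $F_i\N\ocn_\R f$ is an isomorphism for every $i \le \alpha$. The base $i = 0$ is trivial; at a limit ordinal the statement is preserved because $\ocn_\R$ commutes with direct limits in its first argument. At a successor $i+1$, tensor the short exact sequence $0 \to F_i\N \to F_{i+1}\N \to \M_i \to 0$ with each of $\F'$ and $\F''$: flatness of both yields a commutative diagram of short exact rows in which the left vertical arrow is an isomorphism by induction and the right vertical arrow $\M_i\ocn_\R f \cong \M_i\ocn_\S \bar f$ is an isomorphism, using the natural change-of-scalars identification of Section~\ref{prelim-change-of-scalars} (applicable because $\M_i$ is a discrete right $\S$\+module) together with the hypothesis that $\bar f$ is an isomorphism. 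The five lemma then forces the middle vertical arrow to be an isomorphism, completing the induction.

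The essence of the whole argument is this dévissage: T\+nilpotence of $\fH$ provides the transfinite $\fH$\+annihilator filtration of $\R/\I$, which reduces the injectivity of $\R/\I\ocn_\R f$ to the essentially tautological case of discrete right $\S$\+modules, where the hypothesis on $\bar f$ finishes things. Flatness of \emph{both} $\F'$ and $\F''$ is indispensable so that the successor steps produce short exact sequences to which the five lemma applies; strong closedness of $\fH$ enters only through the setup ensuring that $p^\sharp\C = \C/\fH\tim\C$ is a bona fide $\S$\+contramodule and that $\bar f$ is genuinely an $\S$\+contramodule morphism.
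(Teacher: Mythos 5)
Your proposal is correct and follows essentially the same route as the paper: the cokernel is killed by the contramodule Nakayama Lemma~\ref{contramodule-Nakayama}, and the kernel statement is reduced, via the transfinite $\fH$\+annihilator filtration supplied by Lemma~\ref{discrete-module-Nakayama} together with exactness and colimit-preservation of $\N\ocn_\R\F'$ and $\N\ocn_\R\F''$, to the change-of-scalars identification for discrete right $\S$\+modules. The only cosmetic difference is that you run the dévissage for the single module $\R/\I$, whereas the paper phrases it for an arbitrary discrete right $\R$\+module $\N$; the induction is identical.
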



\begin{proof}
 The conclusion that $f$~is surjective does not depend on
the flatness assumption on $\F'$ and~$\F''$, and only requires
surjectivity of the map $\F'/\fH\tim\nobreak\F'\allowbreak
\rarrow\F''/\fH\tim\F''$.
 It suffices to set $\C=\coker(f)$, observe that $\C/\fH\tim\C=0$,
and apply the contramodule Nakayama Lemma~\ref{contramodule-Nakayama}
in order to conclude that $\C=0$.

 In order to prove the assertion about $\ker(f)$, we will show that
the map of abelian groups $\N\ocn_\R f\:\N\ocn_\R\F'\rarrow\N\ocn_\R
\F''$ is an isomorphism for any discrete right $\R$\+module~$\N$.
 In particular, it will follow that the map $\F'/\I\tim\F'\rarrow
\F''/\I\tim\F''$ is an isomorphism for any open right ideal
$\I\subset\R$, hence $\ker(f)\subset\I\tim\F'\subset\F'$.

 Indeed, for any discrete right $\S$\+module $\M$, one has
$p_\diamond\M\ocn_\R\F'=\M\ocn_\S p^\sharp\F'=\M\ocn_\S p^\sharp\F''=
p_\diamond\M\ocn_\R\F''$ (see Section~\ref{prelim-change-of-scalars}),
so the map $\M\ocn_\R f$ is an isomorphism for any discrete right
$\R$\+module $\M$ annihilated by~$\fH$.
 Now, according to the discrete module Nakayama
Lemma~\ref{discrete-module-Nakayama}, any discrete
right $\R$\+module $\N$ has an increasing filtration
$0=F_0\N\subset F_1\N\subset F_2\N\subset\dotsb\subset F_\alpha\N=\N$,
indexed by some ordinal~$\alpha$, such that the quotient module
$F_{i+1}\N/F_i\N$ is annihilated by $\fH$ for all ordinals $i<\alpha$
and $F_j\N=\bigcup_{i<j}F_i\N$ for all limit ordinals $j\le\alpha$.
 Since the functors of contratensor product with $\F'$ and $\F''$
are exact on the abelian category $\discr\R$ by assumption, and
since they also preserve colimits, it follows by induction on~$i$
that $F_i\N\ocn_\R f$ is an isomorphism for all $0\le i\le\alpha$.
\end{proof}

\begin{thm} \label{projectivity-criterion}
 Let\/ $\R$ be a complete, separated topological associative ring
with a base of neighborhoods of zero formed by open right ideals,
let\/ $\fH\subset\R$ be a topologically left T\+nilpotent strongly
closed two-sided ideal, and let\/ $\S=\R/\fH$ be the quotient ring.
 Let\/ $\F$ be a flat left\/ $\R$\+contramodule.
 Then the left\/ $\R$\+contramodule\/ $\F$ is projective if and only
if the left $\S$\+contramodule\/ $\F/\fH\tim\F$ is projective.
\end{thm}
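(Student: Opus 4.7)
Plan. The ``only if'' direction is direct: if $\F$ is projective, then $\F$ is a direct summand of some free contramodule $\R[[X]]$, and applying the right exact functor $\C\longmapsto\C/\fH\tim\C$ (which preserves direct summands) realizes $\bar\F:=\F/\fH\tim\F$ as a direct summand of $\R[[X]]/\fH\tim\R[[X]]$. Now $\fH\tim\R[[X]]=\fH[[X]]$ by Section~\ref{prelim-reductions}, while strong closedness of $\fH$ gives $\R[[X]]/\fH[[X]]\cong\S[[X]]$; hence $\bar\F$ is a direct summand of the free $\S$\+contramodule $\S[[X]]$ and is therefore projective.

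For the ``if'' direction, the plan is to build a projective $\R$\+contramodule $\P$ whose reduction modulo $\fH$ is $\bar\F$, and then prove $\P\cong\F$. Since $\bar\F$ is projective over $\S$, choose a set $X$ and an idempotent $\bar e\in\Hom^\S(\S[[X]],\S[[X]])$ whose image is (a copy of) $\bar\F$. An $\R$\+contramodule endomorphism of $\R[[X]]$ is the same as a set map $X\to\R[[X]]$, and similarly on the $\S$\+side; strong closedness of $\fH$ makes $\R[[X]]\to\S[[X]]$ surjective, so the induced ring homomorphism $\Hom^\R(\R[[X]],\R[[X]])\to\Hom^\S(\S[[X]],\S[[X]])$ is surjective as well, with kernel the two\+sided ideal of endomorphisms factoring through $\fH[[X]]=\fH\tim\R[[X]]$. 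The central technical step is to lift $\bar e$ to an idempotent $e\in\Hom^\R(\R[[X]],\R[[X]])$ by the classical lifting\+of\+idempotents argument for a nil ideal, executed in the topological setting using the topological left T\+nilpotency of $\fH$. Put $\P:=e(\R[[X]])$: this is a direct summand of $\R[[X]]$, hence a projective $\R$\+contramodule, with $\bar\P\cong\bar e(\S[[X]])\cong\bar\F$.

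To conclude, use projectivity of $\P$ to lift the composite isomorphism $\P\twoheadrightarrow\bar\P\xrightarrow{\sim}\bar\F$ along the canonical surjection $\F\twoheadrightarrow\bar\F$, obtaining an $\R$\+contramodule morphism $\psi\:\P\to\F$ whose reduction $\bar\psi$ is an isomorphism by construction. Both $\P$ (projective, hence flat) and $\F$ (flat by hypothesis) are flat, so Lemma~\ref{flat-reduction-lemma} applies: $\psi$ is surjective and $\ker\psi\subset\bigcap_{\I\subset\R}\I\tim\P$. Since $\P$ is projective, the isomorphism $\P\cong\varprojlim_\I\P/(\I\tim\P)$ from the end of Section~\ref{prelim-reductions} forces this intersection to vanish; hence $\psi$ is an isomorphism and $\F\cong\P$ is projective. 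The main obstacle is the idempotent lifting step: the topological left T\+nilpotency of $\fH$ must be exploited sufficiently ``uniformly over $X$'' to iteratively correct any set\+theoretic lift of $\bar e$ to an actual idempotent, which is exactly where the T\+nilpotency together with strong closedness do their essential work.
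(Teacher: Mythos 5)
Your ``only if'' direction and your endgame (lift a morphism $\P\rarrow\F$ inducing an isomorphism modulo $\fH$, apply Lemma~\ref{flat-reduction-lemma}, and kill the kernel using $\P\cong\varprojlim_\I\P/\I\tim\P$) coincide with the paper's proof. But your route to the projective contramodule $\P$ with $\P/\fH\tim\P\cong\F/\fH\tim\F$ is genuinely different, and it is exactly there that your argument has a gap. You propose to lift an idempotent $\bar e$ from $\Hom^\S(\S[[X]],\S[[X]])$ to $\Hom^\R(\R[[X]],\R[[X]])$ ``by the classical lifting-of-idempotents argument for a nil ideal, executed in the topological setting.'' This is not a routine adaptation, and you do not carry it out. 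The classical iteration $e_{k+1}=3e_k^2-2e_k^3$ converges only if the powers $h^n$ of $h=f^2-f$ tend to zero in a suitable complete ring topology on the endomorphism ring. Here $h$ is an endomorphism of $\R[[X]]$ landing in $\fH[[X]]$, i.e.\ an ``infinite matrix'' $(h_{xy})_{x,y\in X}$ with entries in $\fH$ and rows converging to zero; the coefficients of $h^n$ are infinite sums of products $h_{xy_1}h_{y_1y_2}\dotsm h_{y_{n-1}y}$ over all paths of length~$n$. Showing that these all eventually fall into a given open right ideal $\I$ is precisely the kind of statement that requires the K\H onig-lemma tree argument of Lemma~\ref{contramodule-Nakayama}; termwise topological left T\+nilpotency of $\fH$ does not hand it to you, and the paper's Lemma~\ref{idempotent-lifting} does not apply as stated, since it concerns ideals in $\R$ itself, not the induced ideal in the endomorphism ring of an infinitely generated free contramodule. (You would also need to set up that endomorphism ring as a complete, separated topological ring with open right ideals in the first place.) You flag this as ``the main obstacle'' but leave it entirely unresolved, so as written the proof is incomplete at its central step.

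The paper sidesteps all of this with an Eilenberg-swindle cancellation: replacing $X_0$ by $X=\boZ_{\ge0}\times X_0$ makes $\Q\oplus\S[[X]]\cong\S[[X]]$ free, so one only has to lift a surjection of a free $\R$\+contramodule onto $\S[[Y]]$ through $\F\oplus\R[[X]]$ --- which is immediate from projectivity of $\R[[Y]]$ and needs no idempotents at all. The price is that one proves projectivity of $\F\oplus\R[[X]]$ rather than of $\F$ directly, but $\F$ is then projective as a direct summand. If you want to salvage your approach, either prove the topological nilpotency of the induced ideal in the endomorphism ring by the tree argument (this is essentially the route taken in~\cite{PS3}), or simply adopt the cancellation trick.
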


\begin{proof}
 The functor of contraextension of scalars~$f^\sharp$ with respect to
a continuous homomorphism of topological rings~$f$ always takes
projective contramodules to projective contramodules, since it is
left adjoint to an exact functor of contrarestriction of
scalars~$f_\sharp$ (cf.\ Sections~\ref{prelim-change-of-scalars}
and~\ref{prelim-strongly-closed-ideals}).
 So the ``only if'' assertion is obvious.

 To prove the ``if'', choose a set $X_0$ such that the projective
left $\S$\+contramodule $\Q=\F/\fH\tim\F$ is a direct summand of
the free left $\S$\+contramodule $\S[[X_0]]$.
 Setting $X=\boZ_{\ge0}\times X_0$, so that $\S[[X]]$ is the coproduct
of a countable family of copies of $\S[[X_0]]$ in $\S\contra$, and
using the cancellation trick, one can see that the left
$\S$\+contramodule $\Q\oplus\S[[X]]$ is isomorphic to $\S[[X]]$.

 Consider the left $\R$\+contramodule $\F''=\F\oplus\R[[X]]$ and put
$\Q''=\F''/\fH\tim\F''\cong\Q\oplus\S[[X]]$.
 Then $\Q''$ is a free left $\S$\+contramodule.
 Let us write $\Q''=\S[[Y]]$ (where $Y$ is a subset in $\Q''$
bijective to~$X$).
 Set $\F'=\R[[Y]]$ to be the free left $\R$\+contramodule with
$Y$ generators.
 Then we have natural surjective left $\R$\+contramodule morphisms
$\F''\rarrow p_\sharp\Q''=\S[[Y]]$ and $\F'\rarrow\S[[Y]]$.
 Since $\F'$ is a projective left $\R$\+contramodule, the latter
morphism lifts to a left $\R$\+contramodule morphism
$f\:\F'\rarrow\F''$ satisfying the assumption of
Lemma~\ref{flat-reduction-lemma}.
 (Notice that both the left $\R$\+contramodules $\F'$ and $\F''$
are flat.)
 Thus the morphism~$f$ is surjective with $\ker(f)\subset
\bigcap_{\I\subset\R}\I\tim\F'$.

 Since the left $\R$\+contramodule $\F'$ is projective (and even free)
by construction, the natural map $\F'\rarrow\varprojlim_{\I\subset\R}
\F'/\I\tim\F'$ is an isomorphism (see Section~\ref{prelim-reductions}).
 So one has $\bigcap_{\I\subset\R}\I\tim\F'=0$.
 Hence the morphism~$f$ is an isomorphism.
 We have shown that $\F''$ is a free left $\R$\+contramodule.
 Finally, we can conclude the left $\R$\+contramodule $\F$ is
projective as a direct summand of~$\F''$.
\end{proof}

 The following corollary is a generalization of~\cite[Lemma~A.3]{Psemi}.

\begin{cor} \label{flat-contramodules-are-projective-cor}
 In the assumptions formulated in the beginning of this section, all
flat left\/ $\R$\+contramodules are projective.
\end{cor}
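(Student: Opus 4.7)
The plan is to reduce the corollary, via Theorem~\ref{projectivity-criterion}, to a statement about $\S$-contramodules, and then use the product decomposition of $\S\contra$ from Lemma~\ref{product-of-rings-lemma}(b) together with the semisimplicity of the factor rings~$S_\gamma$. Concretely, the flatness hypothesis on~$\F$ will be used exactly once (in the application of Theorem~\ref{projectivity-criterion}), and the rest of the argument is purely formal.

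First, given a flat left $\R$-contramodule $\F$, I would invoke Theorem~\ref{projectivity-criterion}: its hypotheses are satisfied because, by the running assumptions of the section, $\fH$ is a topologically left T-nilpotent strongly closed two-sided ideal in $\R$ with $\S=\R/\fH$. The theorem then reduces the claim to showing that the left $\S$-contramodule $\F/\fH\tim\F$ is projective.

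Second, I would apply Lemma~\ref{product-of-rings-lemma}(b) to the product decomposition $\S = \prod_{\gamma\in\Gamma} S_\gamma$. This gives an equivalence of abelian categories $\S\contra \simeq \prod_{\gamma\in\Gamma} S_\gamma\modl$ sending a contramodule $\C$ to the family $(e_\gamma\C)_\gamma = (\C/\J_\gamma\tim\C)_\gamma$. Under this equivalence $\Hom^\S$ factors componentwise as $\prod_\gamma \Hom_{S_\gamma}$, so an object of $\S\contra$ is projective if and only if each of its components is a projective $S_\gamma$-module. Thus my task becomes: show that each $\F/\J_\gamma\tim\F$ is projective as a left $S_\gamma$-module.

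Third, this is immediate: each $S_\gamma$ is simple Artinian by hypothesis, hence semisimple as a ring, hence \emph{every} left $S_\gamma$-module is projective. So each $\F/\J_\gamma\tim\F$ is automatically projective, and reassembling via Lemma~\ref{product-of-rings-lemma}(b) yields that $\F/\fH\tim\F$ is projective in $\S\contra$, whence $\F$ is projective in $\R\contra$ by the first step. I do not anticipate a real obstacle: the only minor verification is that the equivalence of Lemma~\ref{product-of-rings-lemma}(b) preserves and reflects projectivity, which is automatic for any equivalence of abelian categories. All the heavy lifting has been packaged into Theorem~\ref{projectivity-criterion} and the product-decomposition lemma; this corollary is essentially their combination with the semisimplicity of matrix rings over division rings.
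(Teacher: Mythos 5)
Your proposal is correct and is essentially the paper's own proof: the paper likewise observes that $\S\contra\cong\prod_{\gamma\in\Gamma}S_\gamma\modl$ is semisimple by Lemma~\ref{product-of-rings-lemma}(b), so every left $\S$\+contramodule is projective, and then concludes by Theorem~\ref{projectivity-criterion}. Your extra remarks on componentwise projectivity only spell out the same argument in more detail.
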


\begin{proof}
 The abelian category $\S\contra\cong\prod_{\gamma\in\Gamma}
S_\gamma\modl$ (see Lemma~\ref{product-of-rings-lemma}(b))
is semisimple in these assumptions.
 So all left $\S$\+contramodules are projective, and the assertion
of the corollary follows from
Theorem~\ref{projectivity-criterion}.
\end{proof}

\Section{Existence of Projective Covers}
\label{existence-projcovers-secn}

 This section contains two proofs of its main result, which is
Theorem~\ref{projective-covers-exist-thm}.
 The first one is very short, consisting only of two references: one
of them to the main result of the previous section, and the other one
to a general theorem from category theory.
 The second proof is longer and more explicit.

\begin{thm} \label{rosicky-covers}
 Let\/ $\sB$ be a locally presentable abelian category with enough
projective objects.
 Assume that the class of all projective objects is closed under
direct limits in\/~$\sB$.
 Then every object of\/ $\sB$ has a projective cover.
\end{thm}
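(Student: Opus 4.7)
The plan is to invoke the general Enochs covering theorem, in its form valid for locally presentable abelian categories. This theorem states that if $\sF$ is a class of objects in such a category, closed under direct limits, and such that every object admits an $\sF$\+precover, then in fact every object admits an $\sF$\+cover.

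I would first verify the precover hypothesis for the class $\sF=\proj\sB$ of projective objects. Since $\sB$ has enough projectives, for every object $C\in\sB$ there is an epimorphism $p\:P\rarrow C$ with $P$ projective; and any morphism $P'\rarrow C$ with $P'$ projective factors through~$p$ (because $P'$ is projective and $p$ is surjective), so $p$ is automatically an $\sF$\+precover. The second hypothesis of Enochs's theorem, namely the closure of $\sF$ under direct limits, is exactly the standing assumption of the statement. Applying the Enochs covering theorem therefore yields a projective cover of every object of\/~$\sB$.

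The whole content of the argument thus lies in the Enochs covering theorem in the locally presentable setting. Its original formulation was for flat covers of modules, but the argument rests only on general categorical properties and has been extended to Grothendieck and locally presentable abelian categories; the label of the present theorem suggests Rosick\'y's work on accessible categories as the intended reference. The main obstacle in a self-contained proof would be the transfinite construction that extracts a cover out of an arbitrary precover: starting from $p\:P\rarrow C$, one uses the presentability rank of $P$ to produce, step by step and via the direct limit closure of\/ $\sF$ at limit ordinals, a subobject of~$P$ (or a quotient precover) on which $p$ restricts to a projective cover in the sense of Lemma~\ref{superfluous-kernel}. The direct limit closure is essential at limit stages to ensure the constructed object remains projective.
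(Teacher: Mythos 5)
Your proposal is correct and takes essentially the same route as the paper: the paper's entire proof is a citation of the Enochs-type covering theorem for locally presentable abelian categories from the Positselski--Rosick\'y paper \cite{PR} (precovering class closed under direct limits implies covering class), applied to the class of projectives, which is precovering because $\sB$ has enough projectives. Your verification of the precover hypothesis and your identification of where the real work lies (the transfinite argument behind the general theorem) match the intended argument.
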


\begin{proof}
 This is a particular case of~\cite[Theorem~2.7, or Corollary~3.7,
or Corollary~4.17]{PR}.
 (This result goes back to~\cite[Theorems~2.1 and~3.1]{Eno}
and~\cite[Theorem~1.2]{Bash}.)
\end{proof}

\begin{cor} \label{flats-are-projective-implies-covers-existence}
 Let\/ $\R$ be a complete, separated topological associative ring with
a base of neighborhoods of zero consisting of open right ideals.
 Assume that all flat left\/ $\R$\+contramodules are projective.
 Then every left\/ $\R$\+contramodule has a projective cover.
\end{cor}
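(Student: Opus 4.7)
The plan is to deduce the corollary directly from Theorem~\ref{rosicky-covers} by verifying its three hypotheses for the abelian category $\sB=\R\contra$.

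First, I would note that $\R\contra$ is a locally presentable abelian category. This is because $\R$\+contramodules are, by definition, algebras/modules over the accessible monad $\boT_\R=\R[[{-}]]$ on the category of sets (as recalled in Section~\ref{prelim-contramodules}); the category of algebras over such a monad is a variety of (infinitary) algebras, hence locally presentable. Abelianness has already been recorded in Section~\ref{prelim-contramodules} via~\cite[Lemma~1.1]{Pper}. Existence of enough projectives in $\R\contra$ was also established in Section~\ref{prelim-contramodules}: the free contramodules $\R[[X]]$ are projective, and for any $\C\in\R\contra$ the contraaction map $\pi_\C\:\R[[\C]]\rarrow\C$ is a surjection from a projective contramodule.

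The key step is the third hypothesis: closure of the class of projective left $\R$\+contramodules under direct limits in $\R\contra$. Here is where the assumption of the corollary enters. Every projective contramodule is flat, and the class of flat left $\R$\+contramodules is closed under direct limits in $\R\contra$ (this was recalled at the beginning of Section~\ref{flat-contramodules-secn}, citing~\cite[Lemma~5.6]{PR}). Hence any direct limit of projective left $\R$\+contramodules is flat. By the standing assumption that all flat left $\R$\+contramodules are projective, it follows that such a direct limit is projective.

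With all three hypotheses verified, Theorem~\ref{rosicky-covers} applies to $\sB=\R\contra$ and yields the existence of a projective cover for every left $\R$\+contramodule. I expect no real obstacle: the only nontrivial ingredient is the permanence of flatness under direct limits, which is quoted from~\cite{PR}, and the local presentability of $\R\contra$, which is a formal consequence of its presentation as the Eilenberg--Moore category of an accessible monad on $\Sets$.
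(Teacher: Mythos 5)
Your proposal is correct and follows essentially the same route as the paper: local presentability of $\R\contra$, enough projectives, and closure of projectives under direct limits (via flatness being preserved by direct limits plus the standing hypothesis), followed by an appeal to Theorem~\ref{rosicky-covers}. The paper's own proof is just a more condensed version of exactly this argument.
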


\begin{proof}
 The abelian category $\R\contra$ is
locally presentable~\cite[Section~5]{PR}.
 All the projective left $\R$\+contramodules are flat, and the class of
all flat left $\R$\+contramodules is closed under direct limits (see
Section~\ref{flat-contramodules-secn}).
 Thus the assertion of the corollary follows from
Theorem~\ref{rosicky-covers}.
\end{proof}

 This is essentially all we need for our first proof of
Theorem~\ref{projective-covers-exist-thm}.
 To prepare ground for the second one, we have to address the question
of lifting of idempotents.

 It is a classical fact in the associative ring theory that idempotents
can be lifted modulo any nil ideal.
 The following lemma provides a topological generalization.
 We refer to Section~\ref{t-nilpotent-secn} for the definition of
a \emph{topologically nil} topological ring without unit.

\begin{lem} \label{idempotent-lifting}
 Let\/ $\R$ be complete, separated topological associative ring with
a base of neighborhoods of zero formed by open right ideals.
 Let\/ $\fH\subset\R$ be a topologically nil closed two-sided ideal,
and let $S=\R/\fH$ be the quotient ring.
 Then any idempotent element in $S$ can be lifted to an idempotent
element in\/~$\R$.
\end{lem}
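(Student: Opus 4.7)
My plan is to exhibit $e$ by summing an explicit convergent Catalan-type series in $\R$. Choose any preimage $a\in\R$ of $\bar e$, so that $x:=a^2-a$ lies in $\fH$. Since $a$ and $x$ commute, every element I construct below lives in the commutative closed subring $\R_0\subset\R$ topologically generated by $a$, and I may compute inside $\R_0$ without worrying about noncommutativity of~$\R$.

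The only place the hypothesis on $\fH$ enters is the following observation: for any $r\in\fH$ and any sequence of integers $(c_k)_{k\ge0}$, the series $\sum_{k\ge0}c_k r^k$ converges in $\R$. Topological nilness gives $r^k\to0$, so for every open right ideal $\I\subset\R$ there is an $M$ with $r^k\in\I$ for $k\ge M$; but $\I$ is an additive subgroup, hence $c_k r^k\in\I$ as well, so the tails of $\sum c_k r^k$ lie in $\I$ and the partial sums are Cauchy. Completeness of $\R$ then yields convergence. I shall apply this observation twice: with $r=-4x$ and $c_k=1$ to produce an inverse $(1+4x)^{-1}=\sum_k(-4x)^k$ in $\R_0$; and with $r=y:=x(1+4x)^{-1}\in\fH$ and $c_k$ equal to the $k$th Catalan number $C_k=\binom{2k}{k}/(k+1)$, to produce an element $g:=\sum_{k\ge0}C_k y^k\in\R_0$ satisfying $g-1\in\fH$.

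Now set $u:=(1-2a)(1+4x)^{-1}$ and $h:=xug\in\fH$, and put $e:=a+h$. The identity $(2a-1)^2=1+4x$ makes $u$ an inverse of $1-2a$ in $\R_0$, so $h(1-2a)=xg$ and $h^2=x^2u^2g^2=xyg^2$; expanding gives
\[
 (a+h)^2-(a+h)=(a^2-a)+h(2a-1)+h^2=x-xg+xyg^2=x(1-g+yg^2).
\]
It remains to verify $g=1+yg^2$ as a genuine identity in $\R_0$, not merely as formal power series. For this, expand $g_N^2$ with $g_N:=\sum_{k=0}^N C_k y^k$: the Catalan recurrence $\sum_{k+l=m}C_k C_l=C_{m+1}$ yields $g_N^2=\sum_{m=0}^N C_{m+1}y^m+R_N$, where $R_N$ is a finite integer-coefficient combination of $y^m$ with $m>N$. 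The convergence lemma applied to $y$ forces $R_N\to0$, so $g^2=\sum_{m\ge0}C_{m+1}y^m$; multiplying by $y$ gives $yg^2=g-1$, whence the displayed expression vanishes and $e$ is the desired idempotent lift of $\bar e$. The main subtlety of the argument, and the point where we genuinely need $\fH$ to be topologically nil rather than merely closed, is precisely this convergence lemma.
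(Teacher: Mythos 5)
Your proof is correct, but it follows a genuinely different route from the paper's. The paper adapts the Stacks Project iteration: starting from a preimage $f$ of $\bar e$ it sets $e_{k+1}=3e_k^2-2e_k^3$, shows by induction that $e_k^2-e_k\in h^{2^k}\R$ with $h=f^2-f\in\fH$, and uses topological nilness (so $h^n\to0$) to see that the $e_k$ form a Cauchy sequence whose limit is the desired idempotent congruent to $f$ modulo~$\fH$. You instead produce the lift in closed form, $e=a+x(1-2a)(1+4x)^{-1}\sum_k C_ky^k$, by summing the Catalan generating series; the hypothesis on $\fH$ enters only through your convergence lemma ($r^k\to0$ for $r\in\fH$, hence $\sum_k c_kr^k$ converges for integer~$c_k$), applied once to invert $1+4x$ and once to define~$g$. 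Both arguments use topological nilness in exactly the same way --- only the powers of a single element of $\fH$ need to tend to zero --- and both land the lift in $a+\fH$. The iteration is shorter and converges quadratically; your formula is more explicit, at the price of the extra bookkeeping you correctly carry out: the passage from partial sums to limits in identities like $g^2=\lim_Ng_N^2$ and $yg^2=g-1$, and the commutativity of the closed subring $\R_0$, are justified because left multiplication by a fixed element is continuous by condition~(ii) of Section~\ref{prelim-topol-rings}, while right multiplication by anything preserves the basic open right ideals (this also handles the one varying-factor product $(g-g_N)g_N$). So the proposal stands as a valid alternative proof.
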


\begin{proof}
 We adopt the argument from~\cite[Tag~00J9]{SP} to the situation at
hand.
 Let $\bar e\in S$ be an idempotent element.
 Choose any preimage $f\in\R$ of the element $\bar e\in S$.
 Proceeding by induction, we construct a sequence of elements
$e_k\in\R$, \,$k\ge0$, starting from $e_0=f$ and passing
from $e_k$ to $e_{k+1}$ by the rule
$$
 e_{k+1}= e_k - (2e_k-1)(e_k^2-e_k)=3e_k^2-2e_k^3, \qquad k\ge0.
$$
 A straightforward computation yields
$$
 e_{k+1}^2-e_{k+1} = (4e_k^2-4e_k-3)(e_k^2-e_k)^2.
$$

 Now let us show that the sequence of elements $e_k\in\R$ converges
in the topology of $\R$ as $k\to\infty$, and that its limit~$e$ is
an idempotent element in $\R$ whose image in $S$ is equal to~$\bar e$.
 Indeed, set $h=f^2-f$; then we have $h\in\fH$, since $\bar e^2-\bar e
=0$ in~$S$.
 Notice that all the elements~$f$, $h$, and $e_k$ belong to the subring
generated by~$f$ in $\R$ over $\boZ$; so they commute with each other.
 It follows from the above formulas by a simple induction on~$k$
that $e_k^2-e_k\in h^{2^k}\R$ for all $k\ge0$.

 Let $\I\subset\R$ be an open right ideal.
 Since the ideal $\fH\subset\R$ is topologically nil, there exists
$n\ge1$ such that $h^n\in\I\cap\fH$.
 Choosing~$m$ such that $2^m\ge n$, we find that $e_{k+1}-e_k=
-(2e_k-1)(e_k^2-e_k)\in\I$ for all $k\ge m$.
 Thus the sequence of elements~$e_k$ converges in $\R$ as $k\to\infty$,
and we can consider its limit $e\in\R$.
 We also have $e_k^2-e_k\in\I$ for all $k\ge m$, hence
$e^2-e\in\I$, and, as this holds for all the open right ideals
$\I\subset\R$, it follows that $e^2-e=0$ in~$\R$.
 Finally, $e_{k+1}-e_k\in h\R\subset\fH$ for all $k\ge0$, hence
$e-f\in\fH$, and therefore the image of~$e$ in~$S$ is equal to~$\bar e$.
\end{proof}

\begin{thm} \label{projective-covers-exist-thm}
 In the assumptions formulated in the beginning of
Section~\ref{projectivity-of-flats-secn}, every
left\/ $\R$\+contramodule has a projective cover.
\end{thm}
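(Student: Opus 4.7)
The plan is to follow the author's stated two-proof strategy.

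\emph{First proof (short).} Under the hypotheses of Section~\ref{projectivity-of-flats-secn}, Corollary~\ref{flat-contramodules-are-projective-cor} gives that every flat left $\R$-contramodule is projective. The category $\R\contra$ is locally presentable, has enough projectives, and the class of flat (hence now also projective) left $\R$-contramodules is closed under direct limits. Thus Corollary~\ref{flats-are-projective-implies-covers-existence} (which relies on Theorem~\ref{rosicky-covers}) immediately yields that every left $\R$-contramodule has a projective cover.

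\emph{Second proof (outline).} Given $\C\in\R\contra$, set $\overline\C=\C/\fH\tim\C\in\S\contra$. Since $\S\cong\prod_{\gamma\in\Gamma}S_\gamma$ with each $S_\gamma$ simple Artinian, Lemma~\ref{product-of-rings-lemma}(b) gives $\S\contra\cong\prod_\gamma S_\gamma\modl$, a semisimple category. In particular $\overline\C$ is projective in $\S\contra$, so for some set~$X$ it is a direct summand of the free contramodule $\S[[X]]$, cut out by an idempotent $\bar e\in\End_\S(\S[[X]])$. I would then lift $\bar e$ to an idempotent $e\in\End_\R(\R[[X]])$ by endowing the endomorphism ring with a natural linear topology (the one coming from the topology on $\R[[X]]$ via the isomorphism $\End_\R(\R[[X]])\cong\R[[X]]^X$) and applying Lemma~\ref{idempotent-lifting}; here the lifting hypothesis amounts to showing that the kernel of $\End_\R(\R[[X]])\twoheadrightarrow\End_\S(\S[[X]])$ is topologically nil, which should follow from topological left T-nilpotency of $\fH$ together with strong closedness (so that $\fH[[X]]$ is the kernel of $\R[[X]]\to\S[[X]]$). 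Define $\P=e\R[[X]]$; this is a projective left $\R$-contramodule with $\P/\fH\tim\P\cong\overline\C$.

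\emph{Constructing the cover.} Since $\P$ is projective, the composition $\P\twoheadrightarrow\overline\C$ lifts along $\C\twoheadrightarrow\overline\C$ to a morphism $p\:\P\to\C$. By construction $p$ induces an isomorphism modulo $\fH\tim$; so $\C = p(\P) + \fH\tim\C$. The contramodule Nakayama Lemma~\ref{contramodule-Nakayama} applied to the cokernel of $p$ shows that $p$ is surjective, because $\fH\tim\C$ is a superfluous subcontramodule of $\C$ (any $G\subset\C$ with $G+\fH\tim\C=\C$ forces $\fH\tim(\C/G)=\C/G$, whence $\C/G=0$). A diagram chase on $\P/\fH\tim\P\overset\sim\to\C/\fH\tim\C=\P/(\ker p+\fH\tim\P)$ gives $\ker p\subset\fH\tim\P$, so $\ker p$ is itself superfluous in $\P$. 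By Lemma~\ref{superfluous-kernel}, $p\:\P\to\C$ is a projective cover.

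\emph{Main obstacle.} The single delicate point in the second proof is the idempotent-lifting step: one must choose the right topology on $\End_\R(\R[[X]])$ and verify that the kernel of reduction mod $\fH$ sits inside a topologically nil ideal to which Lemma~\ref{idempotent-lifting} genuinely applies. The verification should reduce, after a careful unpacking of $\R[[X]]$ as an $\R$-contramodule, to the topological left T-nilpotency of $\fH$ together with the strong-closedness property that guarantees $\R[[X]]/\fH[[X]]\cong\S[[X]]$. Everything else is a routine diagram-chase anchored in the two Nakayama lemmas of Section~\ref{t-nilpotent-secn}.
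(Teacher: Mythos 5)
Your first proof is correct and is precisely the paper's own first proof: Corollary~\ref{flat-contramodules-are-projective-cor} combined with Corollary~\ref{flats-are-projective-implies-covers-existence}. Since that argument is complete, the theorem is established by your proposal.

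Your second proof, however, departs from the paper's at the crucial step, and the departure is exactly where the unresolved difficulty sits. You propose to present $\overline\C$ as the image of an idempotent $\bar e$ in the endomorphism ring of the free contramodule $\S[[X]]$ and to lift $\bar e$ to an idempotent of the endomorphism ring of $\R[[X]]$ via Lemma~\ref{idempotent-lifting}. That lemma is stated for the topological ring $\R$ itself; to invoke it for $\Hom^\R(\R[[X]],\R[[X]])^\rop$ you would have to (i)~equip this ring with a complete, separated ring topology having a base of neighborhoods of zero formed by open right ideals (note that the annihilator-based finite topology of Section~\ref{prelim-endomorphism-ring} is \emph{not} suitable here, since the powers $g^n$ of an element of the kernel ideal are generally nonzero on every finitely generated subobject, so they do not converge to zero in that topology), and (ii)~prove that the kernel of the reduction map onto $\Hom^\S(\S[[X]],\S[[X]])^\rop$ is a topologically nil closed two-sided ideal. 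Point~(ii) is not a formal consequence of the topological left T\+nilpotency of $\fH$: writing $g(x)=\sum_{y}h_{x,y}y$ with $h_{x,y}\in\fH$, the convergence $g^n\to0$ amounts to the finiteness, modulo each open right ideal, of the trees of products $h_{x,y_1}h_{y_1,y_2}\dotsm$ --- i.e., precisely the K\H onig-lemma argument that forms the hard half of the proof of Lemma~\ref{contramodule-Nakayama}. You flag this as the ``main obstacle'' but do not close it, so as written the second proof has a gap. The paper sidesteps the issue entirely: since $\S\contra\cong\prod_\gamma S_\gamma\modl$ is semisimple, $\overline\C$ is a coproduct of irreducibles $q_\gamma{}_\sharp I_\gamma\cong\S i_\gamma$ with $i_\gamma$ an idempotent of the ring $\S$ itself; each $i_\gamma$ is lifted to an idempotent $\tilde i_\gamma\in\R$ by Lemma~\ref{idempotent-lifting} exactly as stated, and $\P$ is taken to be the coproduct of the projectives $\R\tilde i_\gamma$. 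Your concluding step (surjectivity of $p$ via the contramodule Nakayama lemma, and $\ker p\subset\fH\tim\P$ hence superfluous) is correct and agrees in substance with the paper.
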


\begin{proof}[First proof]
 The assertion follows from 
Corollaries~\ref{flat-contramodules-are-projective-cor}
and~\ref{flats-are-projective-implies-covers-existence}.
\end{proof}

\begin{proof}[Second proof]
 Let us first show that for every left $\S$\+contramodule $\D$ there
exists a projective left $\R$\+contramodule $\P$ such that
the left $\S$\+contramodule $\P/\fH\tim\P$ is isomorphic to~$\D$.
 Indeed, Lemma~\ref{product-of-rings-lemma}(b) applied to the ring
$\S=\prod_{\gamma\in\Gamma}S_\gamma$ implies that any left
$\S$\+contramodule, viewed as an object of $\S\contra$,
is a coproduct of irreducible left $\S$\+contramodules.
 The irreducible left $\S$\+contramodules are indexed by the elements
$\gamma\in\Gamma$ and have the form $q_\gamma{}_\sharp I_\gamma$, where
$I_\gamma$ is the (unique) irreducible left $S_\gamma$\+module.

 One easily finds a (noncentral) idempotent element $i_\gamma\in\S$
such that $q_\gamma{}_\sharp I_\gamma\cong\S i_\gamma$.
 Lifting~$i_\gamma$ to an idempotent element $\tilde i_\gamma\in\R$
using Lemma~\ref{idempotent-lifting}, one can produce a projective
left $\R$\+contramodule $\P_\gamma=\R\tilde i_\gamma$ such that
$\P_\gamma/\fH\tim\P_\gamma\cong\S i_\gamma$.
 Finally, coproducts of projective objects are projective, and
the reduction functor $\C\longmapsto p^\sharp(\C)=\C/\fH\tim\C$
preserves coproducts, which allows to construct a projective left
$\R$\+contramodule $\P$ such that $\P/\fH\tim\P\cong\D$.

 Now let $\C$ be a left $\R$\+contramodule.
 Consider the left $\S$\+contramodule $\D=\C/\fH\tim\C$ and find
a projective left $\R$\+contramodule $\P$ such that $\P/\fH\tim\P
\cong\D$.
 Then we have two surjective left $\R$\+contramodule morphisms
$\P\rarrow p_\sharp\D$ and $\C\rarrow p_\sharp\D$.
 Since $\P\in\R\contra$ is a projective object, we can lift
the former morphism to a left $\R$\+contramodule morphism
$f\:\P\rarrow\C$ such that the induced morphism $\P/\fH\tim\P
\rarrow\C/\fH\tim\C$ is an isomorphism.

 Arguing as in Lemma~\ref{flat-reduction-lemma} and using
the contramodule Nakayama Lemma~\ref{contramodule-Nakayama},
one shows that the map~$f$ is surjective.
 We claim that the morphism~$f$ is a projective cover of a left
$\R$\+contramodule~$\C$.
 Indeed, in view of Lemma~\ref{superfluous-kernel}, it suffices
to check that $\fK=\ker(f)$ is a superfluous $\R$\+subcontramodule
in~$\P$.

 Let $\G\subset\P$ be an $\R$\+subcontramodule such that
$\fK+\G=\P$.
 The morphism of left $\S$\+contramodules
$p^\sharp(\fK)\rarrow p^\sharp(\P/\G)$ is surjective, because
the morphism of left $\R$\+contramodules $\fK\rarrow\P/\G$ is.
 On the other hand, the morphism $p^\sharp(\fK)\rarrow p^\sharp(\P)$
is zero, since the composition $\fK\rarrow\P\rarrow\C$ vanishes and
the morphism $p^\sharp(\P)\rarrow p^\sharp(\C)$ is an isomorphism.
 Therefore, the composition $p^\sharp(\fK)\rarrow p^\sharp(\P)\rarrow
p^\sharp(\P/\G)$ also vanishes.
 It follows that $p^\sharp(\P/\G)=0$, that is $\P/\G=\fH\tim(\P/\G)$.
 Applying Lemma~\ref{contramodule-Nakayama} again, we conclude
that $\P/\G=0$, as desired.
\end{proof}

\Section{Proof of Main Theorem} \label{proof-of-main-theorem-secn}

 Let $\R$ be a complete, separated topological associative ring with
a base of neighborhoods of zero formed by open two-sided ideals.
 We will need to assume that one of the following three conditions
holds:
\begin{enumerate}
\renewcommand{\theenumi}{\alph{enumi}}
\item the ring $\R$ is commutative; or
\item $\R$ has a countable base of neighborhoods of zero; or
\item $\R$ has only a finite number of classically
semisimple (semisimple Artinian) discrete quotient rings.
\end{enumerate}

 The following theorem is the main result of this paper.

\begin{thm} \label{main-theorem}
 Let\/ $\R$ be a complete, separated topological associative ring with
a base of neighborhoods of zero formed by open two-sided ideals.
 Assume that one of the conditions (a), (b), or~(c) is satisfied.
 Then the following conditions are equivalent:
\begin{itemize}
\item[(i)] all flat left\/ $\R$\+contramodules have projective covers;
\item[(i$^\flat$)] all Bass flat left\/ $\R$\+contramodules have
projective covers;
\item[(ii)] all left $\R$\+contramodules have projective covers;
\item[(iii)] all flat left\/ $\R$\+contramodules are projective;
\item[(iii$^\flat$)] all Bass flat left\/ $\R$\+contramodules
are projective;
\item[(iv)] $\R$ has a topologically left T\+nilpotent strongly closed
two-sided ideal\/ $\fH$ such that the quotient ring\/ $\R/\fH$ is
topologically isomorphic to a product of simple Artinian discrete
rings endowed with the product topology;
\item[(v)] all descending chains of cyclic discrete right\/
$\R$\+modules terminate;
\item[(vi)] all the discrete quotient rings of\/ $\R$ are left perfect.
\end{itemize}
\end{thm}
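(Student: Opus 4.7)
The plan is to establish the equivalence by a cycle threading through condition (iv), where the case split (a)--(c) is needed only for the implication (vi) $\Rightarrow$ (iv). The easy half of the cycle combines (ii) $\Rightarrow$ (i) $\Rightarrow$ (i$^\flat$) and (iii) $\Rightarrow$ (iii$^\flat$) $\Rightarrow$ (i$^\flat$) (all immediate, since projective contramodules are their own projective covers), together with (iii) $\Rightarrow$ (ii) via Corollary~\ref{flats-are-projective-implies-covers-existence} and (i$^\flat$) $\Rightarrow$ (vi) via Corollary~\ref{discrete-quotients-left-perfect-if-bass-covers}. The equivalence (v) $\Leftrightarrow$ (vi) is a direct translation: every open right ideal contains an open two-sided ideal under our hypothesis, so by Lemma~\ref{coperfectness-reformulated} the chain condition in (v) reduces to Bass' descending chain condition on principal right ideals inside each discrete quotient $\R/\I$. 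Finally, Corollary~\ref{flat-contramodules-are-projective-cor} gives (iv) $\Rightarrow$ (iii) and Theorem~\ref{projective-covers-exist-thm} gives (iv) $\Rightarrow$ (ii), closing the cycle.

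The substance of the proof lies in (vi) $\Rightarrow$ (iv). The natural candidate for $\fH$ is the topological Jacobson radical $\fH = \varprojlim_\I H(\R/\I)$ (Lemma~\ref{topological-jacobson-as-projective-limit}). This is a closed two-sided ideal, and its topological left T\+nilpotency is verified directly from the classical T\+nilpotency of $H(\R/\I)$ in the left perfect ring $\R/\I$: given $a_n \in \fH$ and an open two-sided $\I$, some product $a_1\dotsm a_n$ lies in $\I$, and two-sidedness of $\I$ then traps all longer products inside $\I$. Each reduction $(\R/\I)/H(\R/\I)$ is a finite product of simple Artinian rings, so the projective system $((\R/\I)/H(\R/\I))_\I$ has surjective transition maps between finite products of simple Artinian rings.

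What remains is to show that $\fH$ is strongly closed and $\R/\fH$ is topologically isomorphic to a product of simple Artinian rings with the product topology, and this is precisely where (a)--(c) enter. In case (b), Lemma~\ref{countable-base-strongly-closed} and Lemma~\ref{quotient-group-topology-lem}(b) immediately deliver strong closedness and completeness, and the countable projective limit of finite products of simple Artinians is rewritten as a product indexed by the (profinite) set of persistent simple summands, realized inside $\R/\fH$ via lifts of central idempotents by Lemma~\ref{idempotent-lifting}. In case (c), the projective system has only finitely many isomorphism types and therefore stabilizes, so $\R/\fH$ is itself a single discrete semisimple Artinian ring, automatically a finite product of simple Artinians with strong closedness immediate. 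In case (a), commutativity forces each $(\R/\I)/H(\R/\I)$ to be a finite product of fields indexed by the open maximal ideals of $\R$ containing $\I$; one assembles compatible central idempotent lifts via Lemma~\ref{idempotent-lifting} applied to the topologically nil ideal $\fH$, using them to decompose $\R/\fH$ as the product of its residue fields at open maximal ideals with the product topology, and verifying strong closedness directly from the product structure via the facts collected in Section~\ref{products-secn}.

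The main obstacle is the commutative case (a): without countable cofinality or finiteness, one must construct the product decomposition of $\R/\fH$ by hand from coherent idempotent liftings indexed by potentially uncountably many open maximal ideals, and then verify that the resulting ring homomorphism from $\R/\fH$ into the product of residue fields is in fact a topological isomorphism onto the product with its product topology, rather than merely a continuous dense embedding into a completion.
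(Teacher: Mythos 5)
Your overall architecture---the cycle (vi)\,$\Rightarrow$\,(iv)\,$\Rightarrow$\,(iii)\,$\Rightarrow$\,(ii)\,$\Rightarrow$\,(i)\,$\Rightarrow$\,(i$^\flat$)\,$\Rightarrow$\,(vi), with (iii$^\flat$) sandwiched between (iii) and (i$^\flat$) and with (v)\,$\Leftrightarrow$\,(vi) attached, the case analysis (a)--(c) confined to (vi)\,$\Rightarrow$\,(iv)---coincides with the paper's, and the implications you delegate to Corollaries~\ref{flats-are-projective-implies-covers-existence}, \ref{flat-contramodules-are-projective-cor}, \ref{discrete-quotients-left-perfect-if-bass-covers} and Theorem~\ref{projective-covers-exist-thm} are exactly the ones the paper uses. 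Your cases (b) and (c) of (vi)\,$\Rightarrow$\,(iv) also match Proposition~\ref{properfect-prop} in substance, though in (b) the idempotent lifting is superfluous: surjectivity of the transition maps in the countable projective system of semisimple Artinian rings already forces each map to be a projection onto a direct factor, so the limit is a topological product by Lemma~\ref{countable-base-strongly-closed} and the exactness of countable projective limits of surjective systems.

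The genuine gap is case~(a), and you have located it yourself without closing it. Knowing that each $(\R/\I)/H(\R/\I)$ is a finite product of fields only yields a continuous homomorphism from $\R/\fH$ to $\varprojlim_\I(\R/\I)/H(\R/\I)$, and passage to an uncountable projective limit does not preserve surjectivity (the Remark following Lemma~\ref{topological-jacobson-as-projective-limit} gives a commutative counterexample to surjectivity of precisely this map); so ``assembling coherent idempotent lifts'' and ``verifying the map is onto the product with the product topology'' is the entire unproved content, not a routine verification. Note also that Lemma~\ref{idempotent-lifting} lifts one idempotent at a time, whereas you need an uncountable orthogonal family of lifts, coherent over all open ideals, converging to zero and summing to~$1$. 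The paper sidesteps all of this by decomposing the topological ring $\R$ itself: using the unique finite decomposition of each perfect commutative quotient $\R/\I$ into local rings, it identifies the poset of open ideals of $\R$ with the finitely supported elements of $\prod_{\G\in\Gamma}\Xi_\G$ and concludes that $\R\cong\prod_{\G\in\Gamma}\R_\G$ as topological rings, with each $\R_\G$ local and $\fH_\G=\ker(\R_\G\to\R/\G)$ \emph{open} in $\R_\G$. Then $\fH=\prod_{\G\in\Gamma}\fH_\G$ is a product of open ideals, hence strongly closed by the generalities of Section~\ref{products-secn}, and $\R/\fH\cong\prod_{\G\in\Gamma}\R/\G$ carries the product topology automatically. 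To complete your argument you would need either to reproduce this decomposition of $\R$ or to supply the missing orthogonality, convergence, surjectivity, and topology-comparison arguments for your idempotent construction.
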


\begin{proof}
 The implications
(ii)$\,\Longrightarrow\,$(i)$\,\Longrightarrow\,$(i$^\flat$)
and (iii)$\,\Longrightarrow\,$(iii$^\flat$) are obvious.
 So are the implications (iii)$\,\Longrightarrow\,$(i) and
(iii$^\flat$)$\,\Longrightarrow\,$(i$^\flat$).

 For any complete, separated topological ring $\R$ with a countable
base of neighborhoods of zero formed by open right ideals, any
left $\R$\+contramodule has a flat cover~\cite[Corollary~7.9]{PR}.
 Hence the condition~(iii) implies~(ii) under the assumption of~(b).
 Moreover, Corollary~\ref{flats-are-projective-implies-covers-existence}
provides the implication (iii)$\,\Longrightarrow\,$(ii) for any
complete, separated topological ring $\R$ with a base of neighborhoods
of zero formed by open right ideals.
 (But we do not need to use either of these arguments.)

 The condition~(iv) was already formulated in the beginning of
Section~\ref{projectivity-of-flats-secn}.
 In particular, by Lemma~\ref{section-8-H-is-the-radical},
for any complete, separated topological ring $\R$ with a base of
neighborhoods of zero formed by open right ideals the condition~(iv)
implies that $\fH$ is the topological Jacobson radical of the ring $\R$,
and that $\fH$ also coincides with the Jacobson radical of the ring $\R$
viewed as an abstract ring.

 The implications (iv)$\,\Longrightarrow\,$(iii) and
(iv)$\,\Longrightarrow\,$(ii) are provided by
Corollary~\ref{flat-contramodules-are-projective-cor} and
Theorem~\ref{projective-covers-exist-thm}, respectively, and
hold for any complete, separated topological ring $\R$ with
a base of neighborhoods of zero formed by open right ideals.

 The implication (iii$^\flat$)$\,\Longrightarrow\,$(vi) is provided
by Corollary~\ref{discrete-quotients-left-perfect-if-bass-projective},
and the implication (i$^\flat$)$\,\Longrightarrow\,$(vi) by
Corollary~\ref{discrete-quotients-left-perfect-if-bass-covers}.
 Using the assumption of open two-sided ideals forming a base of
neighborhoods of zero in $\R$, one can obtain the implication
(i$^\flat$)$\,\Longrightarrow\,$(iii$^\flat$) from
Corollary~\ref{countable-colimits-of-projective-contramodules}
and Proposition~\ref{projective-covers-of-flat-contramodules-prop}.

 The implication (iii$^\flat$)$\,\Longrightarrow\,$(v) is provided by
Proposition~\ref{bass-projective-implies-coperfect} with
Lemma~\ref{coperfectness-reformulated}.
 The implication (v)$\,\Longrightarrow\,$(vi) follows straightforwardly
from the characterization of left perfect rings in terms of
the descending chain condition on principal right ideals (as
in~\cite[Theorem~P(6)]{Bas} and~\cite[Theorem~28.4(e)]{AF}), as any
right module over a discrete quotient ring of $\R$ is a discrete
right module over~$\R$.
 The converse implication (vi)$\,\Longrightarrow\,$(v) holds under
the assumption of two-sided ideals forming a base of neighborhoods of
zero in $\R$, as any finitely generated discrete right $\R$\+module
is a module over a discrete quotient ring of $\R$ in this case.

 It is the implication (vi)$\,\Longrightarrow\,$(iv) that needs
both the assumption that $\R$ has a base of neighborhoods of zero
formed by open two-sided ideals and one of the conditions~(a),
(b), or~(c).
 Assuming~(vi) and denoting by $H(R)$ the Jacobson radical
(\,$=$~nilradical) of a left perfect discrete ring $R$, we set
$$
 \fH=\varprojlim\nolimits_{\I\subset\R}H(\R/\I)\subset\R,
$$
where the projective limit is taken over all the open two-sided ideals
$\I$ in~$\R$, to be the (topological) Jacobson radical of the ring $\R$
(see Lemma~\ref{topological-jacobson-as-projective-limit}).
 Notice that for any surjective morphism of left perfect rings
$f\:R'\rarrow R''$ one has $f(H(R'))=H(R'')$.
 In order to finish the proof of the theorem, it remains to
apply the next proposition.
\end{proof}

\begin{prop} \label{properfect-prop}
 Let\/ $\R$ be a complete, separated topological ring with a base
of neighborhoods of zero formed by open two-sided ideals such that
all the discrete quotient rings of\/ $\R$ are left perfect.
 Assume that one of the conditions (a), (b), or~(c) is satisfied.
 Then\/ $\fH=\varprojlim_{\I\subset\R}H(\R/\I)$ is a topologically left
T\+nilpotent strongly closed two-sided ideal in\/ $\R$, and
the quotient ring\/ $\S=\R/\fH$ is topologically isomorphic to
a product of simple Artinian discrete rings endowed with
the product topology.
\end{prop}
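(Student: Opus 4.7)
The plan is to verify in turn that $\fH$ is a closed two-sided ideal, that it is topologically left T-nilpotent, that $\S = \R/\fH$ has the claimed product structure, and that $\fH$ is strongly closed. Closedness is immediate from the presentation $\fH = \bigcap_\I p_\I^{-1}(H(\R/\I))$ as a filtered intersection of closed two-sided ideals, where $p_\I \: \R \to \R/\I$ denotes the canonical projection. For topological left T-nilpotence, I would argue pointwise: given a sequence $(a_n)_{n \geq 1}$ in $\fH$ and an open two-sided ideal $\I$, the images $\bar a_n \in \R/\I$ lie in $H(\R/\I)$, which is left T-nilpotent by Bass's theorem applied to the left perfect discrete ring $\R/\I$; hence $\bar a_1 \cdots \bar a_N = 0$ for some $N$, giving $a_1 \cdots a_N \in \I$, which is the required convergence.

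The analysis of $\S$ and its topology splits into the three cases. In case (c), each ideal of the form $p_\I^{-1}(H(\R/\I))$ has semisimple Artinian discrete quotient; by the finiteness hypothesis, only finitely many such ideals occur, and their intersection (which equals $\fH$) is therefore a finite intersection of open two-sided ideals, hence open. A direct argument shows $\R/\fH$ embeds into the finite product of these semisimple Artinian rings and has trivial Jacobson radical, so it is itself semisimple Artinian discrete, i.e., a finite product of simple Artinian rings with the discrete product topology. In case (b), Lemma~\ref{countable-base-strongly-closed} automatically gives strong closedness of $\fH$ and identifies $\S$ with the countable projective limit $\varprojlim_\I (\R/\I)/H(\R/\I)$ of semisimple Artinian rings; the product structure then follows by lifting a compatible system of primitive central idempotents through $\fH$ via Lemma~\ref{idempotent-lifting} (applicable since $\fH$ is topologically nil). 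In case (a), commutativity forces each $(\R/\I)/H(\R/\I)$ to be a finite product of fields, using the classical decomposition of commutative perfect rings as finite products of commutative local perfect rings. The resulting direct system of finite sets of primitive central idempotents has a direct limit $\Gamma$; after lifting via Lemma~\ref{idempotent-lifting}, I obtain a family of pairwise orthogonal central idempotents $(\tilde e_\gamma)_{\gamma \in \Gamma}$ in $\R$. The key check is that this family converges to zero in the topology of $\R$ (because at each stage $\I$ only finitely many $\tilde e_\gamma$ are nonzero modulo $\I$) and that its partial sums converge to $1$, yielding a topological isomorphism $\R \cong \prod_\gamma \R_\gamma$ with each $\R_\gamma = \R \tilde e_\gamma$ a complete commutative local ring; then $\fH$ decomposes as $\prod_\gamma \m_\gamma$, and $\S \cong \prod_\gamma \R_\gamma/\m_\gamma$ is a product of fields.

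Strong closedness is automatic in case (c) because $\fH$ is open, immediate in case (b) from Lemma~\ref{countable-base-strongly-closed}, and in case (a) follows from the product decomposition of $\R$: the canonical identification $\R[[X]] \cong \prod_\gamma \R_\gamma[[X]]$ together with $\S \cong \prod_\gamma \R_\gamma/\m_\gamma$ reduces surjectivity of $\R[[X]] \to \S[[X]]$ to the trivial coordinate-wise surjectivity $\R_\gamma[[X]] \to (\R_\gamma/\m_\gamma)^{(X)}$, since each residue field $\R_\gamma/\m_\gamma$ is discrete. The main obstacle is the construction in case (a), where one must verify not merely the algebraic decomposition but also the topological matching between the product topology on $\prod_\gamma \R_\gamma$ and the original topology on $\R$; this rests on the topological T-nilpotence of $\fH$ (ensuring the lifted idempotents $(\tilde e_\gamma)$ converge to zero) and on checking that a base of open two-sided ideals in the product agrees, via the isomorphism, with the base given by preimages of open ideals in the discrete quotients $\R/\I$.
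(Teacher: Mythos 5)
Your proposal is correct in substance and handles cases (b) and (c) essentially as the paper does: the topological left T\+nilpotence of $\fH$ via Bass's theorem applied to each discrete quotient, the countable surjective inverse limit argument in case~(b), and the finiteness/openness argument in case~(c) all match the paper's proof. The genuine divergence is in case~(a), where the paper never lifts idempotents: it instead analyses the poset of open ideals directly, showing that the open ideals $\J$ with local quotient $\R/\J$ organize into directed posets $\Xi_\G$ indexed by the open maximal ideals $\G\in\Gamma$, that every open ideal $\I$ decomposes $\R/\I$ uniquely as a finite product of local rings $\R/\I_\G$, and hence that $\R\cong\prod_{\G\in\Gamma}\R_\G$ with $\R_\G=\varprojlim_{\J\in\Xi_\G}\R/\J$. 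Your route---lifting the primitive idempotents of $\S$ to a zero-convergent orthogonal family $(\tilde e_\gamma)$ in $\R$ with $\sum_\gamma\tilde e_\gamma=1$---reaches the same decomposition and is legitimate, but you should make explicit the two facts that justify passing from Lemma~\ref{idempotent-lifting} (which lifts a single idempotent) to a compatible family: in the commutative case the lifts are automatically central, and they are automatically orthogonal and unique because an idempotent lying in a closed topologically nil ideal of a separated ring must vanish. The idempotent route is arguably more transparent; the paper's poset route avoids any discussion of convergence of idempotent families. One caveat: in case~(b) your appeal to Lemma~\ref{idempotent-lifting} is both unnecessary and slightly off, since that lemma does not produce \emph{central} idempotents in a noncommutative ring; but no lifting to $\R$ is needed there at all---once $\S$ is identified with the countable surjective inverse limit of the semisimple Artinian rings $(\R/\I)/H(\R/\I)$, every transition map is a projection onto a direct factor and the product decomposition of $\S$ follows directly, which is how the paper argues.
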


\begin{proof}
 The two-sided ideal $\fH\subset\R$ is closed by construction and,
viewed as a topological ring without unit, it is topologically
left T\+nilpotent as the projective limit of T\+nilpotent
discrete rings without unit.
 Alternatively, one can use the discussion in
Section~\ref{topological-jacobson-secn}, and deduce the topological
left T\+nilpotency of the topological Jacobson radical $\fH$ from
the condition~(v) and
Corollary~\ref{coperfect-implies-jacobson-T-nilpotent-cor}.

 In order to prove the remaining assertions, let us consider
the three cases separately.

 (b) First of all, any closed subgroup in a topological
abelian group with a countable base of neighborhoods of zero is
strongly closed (see Lemma~\ref{countable-base-strongly-closed}).

 Furthermore, for any discrete quotient ring $R=\R/\I$ of
the topological ring $\R$, we have a short exact sequence
$$
 0\lrarrow H(R)\lrarrow R\lrarrow R/H(R)\lrarrow0.
$$
 The transition maps in the projective system $(H(\R/\I))_{\I\subset\R}$
are surjective, so passing to the (countable filtered) projective limit
we get a short exact sequence
$$
0\lrarrow\fH\lrarrow\R\lrarrow\S=
 \varprojlim\nolimits_{\I\subset\R}R/H(R)\lrarrow0.
$$
 This proves that the topological ring $\S$ is the topological
projective limit of the countable filtered projective system of
semisimple Artinian discrete rings $R/H(R)$ and surjective
morphisms between them.
 All such ring homomorphisms are projections onto direct factors,
and it follows that $\S$ is a topological product of simple
Artinian discrete rings.

 (c) Let $\J_1$ and $\J_2\subset\R$ be two open two-sided
ideals such that the quotient rings $\R/\J_1$ and $\R/\J_2$ are
semisimple Artinian.
 Since $R=\R/(\J_1\cap\J_2)$ is a left perfect discrete ring by
assumption, we have $H(R)\subset \J_1/(\J_1\cap\J_2)$ and
$H(R)\subset\J_2/(\J_1\cap\J_2)$, so $H(R)=0$ and $R$ is
a semisimple Artinian ring, too.
 Since $\R$ only has a finite number of semisimple Artinian
discrete quotient rings, it follows that there exists
a unique minimal open two-sided ideal $\J\subset\R$ such that
$\R/\J$ is semisimple Artinian.

 Now if $\I\subset\J\subset\R$ is an open two-sided ideal, then
$H(\R/\I)=\J/\I$.
 Thus we have $\fH=\J$, so $\fH$ is an open (hence strongly closed)
two-sided ideal in $\R$ and the quotient ring $\R/\fH$ is a finite
product of simple Artinian rings.

 (a) Any perfect commutative ring uniquely decomposes as a finite
product of perfect commutative local rings, while any semisimple
commutative ring uniquely decomposes as a finite product of fields.

 Let $\Gamma$ be set of all open ideals $\G\subset\R$ such that
the discrete quotient ring $\R/\G$ is a field.
 Furthermore, consider the set $\Xi$ of all open ideals $\J\subset\R$
such that the discrete quotient ring $\R/\J$ is local.
 For any such ideal $\J$, there exists a unique ideal $\G\in\Gamma$
for which $\J\subset\G$.
 Denote by $\Xi_\G$ the disjoint union of the set $\{\J\in\Xi\mid
\J\subset\G\}$ with the one-point set $\{\R\}$ consisting of
the unit ideal of~$\R$.
 We will say that the element $\R$ is the \emph{marked point} of
the set $\Xi_\G$.
 So the set $\Xi$ is the disjoint union of the sets $\Xi_\G\setminus
\{\R\}$ over all open maximal ideals $\G\in\Gamma$.

 Now for any open ideal $\I\subset\R$ the subset $\Delta_\I\subset
\Gamma$ of all $\G\in\Gamma$ such that $\I\subset\G$ is finite
(and bijective to the spectrum of $\R/\I$).
 Furthermore, there exists a unique collection of open ideals
$\I_\G\in\Xi_\G\setminus\{\R\}$, \,$\G\in\Delta_\I$, such that
$\I\subset\I_\G$ and the natural ring homomorphism
$$
 \R/\I\lrarrow\prod\nolimits_{\G\in\Delta_\I}\R/\I_\G
$$
is an isomorphism.
 Conversely, for any finite subset $\Delta\subset\Gamma$ and any
collection of open ideals $\I_\G\in\Xi_\G\setminus\{\R\}$,
\,$\G\in\Delta$, the intersection $\I=\bigcap_{\G\in\Delta}\I_\G$
is an open ideal in $\R$ and the map
$\R/\I\rarrow\prod_{\G\in\Delta}\R/\I_\G$ is an isomorphism.

 Given an open ideal $\I\subset\R$ and an element $\G\in\Gamma\setminus
\Delta_\I$, we put $\I_\G=\R\in\Xi_\G$.
 We have constructed a bijective correspondence between the set of all
open ideals $\I\subset\R$ and the set of all \emph{finitely supported}
elements of the product $\prod_{\G\in\Gamma}\Xi_\G$.
 Here an element $(\I_\G)_{\G\in\Gamma}\in\prod_{\G\in\Gamma}\Xi_\G$
is said to be finitely supported if one has $\I_\G=\R$ for all but
a finite subset of indices $\G\in\Gamma$.

 For any two open ideals $\J'$, $\J''\in\Xi_\G$, the intersection
$\J'\cap\J''$ belongs to $\Xi_\G$ again.
 Hence $\Xi_\G$ is a directed poset with respect to inverse inclusion,
with a minimal element which is the marked point $\R\in\Xi_\G$.
 The local rings $\R/\J$, \,$\J\in\Xi_\G$, form a projective system
indexed by the directed poset $\Xi_\G$, and we can form
their projective limit 
$$
 \R_\G=\varprojlim\nolimits_{\J\in\Xi_\G}\R/\J,
$$
endowing it with the projective limit topology.
 There is a natural ring homomorphism $\R\rarrow\R_\G$ whose
compositions with the projections $\R_\G\rarrow\R/\J$ are surjective,
so these projections are surjective, too.
 In particular, there is a natural surjective ring homomorphism
$\R_\G\rarrow\R/\G$, whose open kernel we denote by
$\fH_\G\subset\R_\G$.

 The poset of all open ideals $\I\subset\R$ is isomorphic to the subposet
in the product $\prod_{\G\in\Gamma}\Xi_\G$ consisting of all the finitely
supported elements.
 Moreover, the projective system of quotient rings $\R/\I$ can be
recovered as the finitely supported product of the projective systems
$\R/\J$, \,$\J\in\Xi_\G$.
 The latter words mean that we have a discrete ring isomorphism
$\R/\I\cong\prod_{\G\in\Gamma}\R/\I_\G$ for all open ideals
$\I\subset\R$, where $\I_\G=\R$ and so $\R/\I_\G=0$ for all
$\G$ outside of the finite subset $\Delta_\G\subset\Gamma$; and such
isomorphisms agree with the transition maps in the projective systems.

 It follows from these considerations that the topological ring $\R$
decomposes as the product of topological rings $\R_\G$,
$$
 \R\cong\prod\nolimits_{\G\in\Gamma}\R_\G,
$$
and the topology on $\R$ coincides with the product topology.
 Furthermore, under this isomorphism one has
$$
 \fH=\prod\nolimits_{\G\in\Gamma}\fH_\G.
$$
 Now the ideal $\fH\subset\R$ is strongly closed as a product of
open ideals $\fH_\G\subset\R_\G$ (cf.\ the discussion in
the beginning of Section~\ref{products-secn}), and the quotient ring
$$
 \S=\R/\fH\cong\prod\nolimits_{\G\in\Gamma}\R_\G/\fH_\G=
 \prod\nolimits_{\G\in\Gamma}\R/\G
$$
is the topological product of discrete fields.
\end{proof}

 We will say that a topological ring $\R$ is \emph{left pro-perfect}
if it is separated and complete, has a base of neighborhoods of zero
consisting of open two-sided ideals, and all the discrete quotient
rings of $\R$ are left perfect.
 According to Theorem~\ref{main-theorem}, over a left pro-perfect
topological ring satisfying one of the conditions (a), (b), or~(c)
all left contramodules have projective covers and all flat left
contramodules are projective.
 Conversely, any complete, separated topological ring with a base
of neighborhoods of zero consisting of open two-sided ideals over
which all Bass flat left contramodules have projective covers is
pro-perfect.

\Section{Examples} \label{examples-secn}

 Three (classes of) examples of pro-perfect topological rings are
discussed below.
 The former two are commutative topological rings, while the third
one is not.

\begin{ex}
 Let $\R$ be a complete Noetherian commutative local ring with
the maximal ideal $\m\subset\R$.
 We view $\R$ as a topological ring in the $\m$\+adic topology.
 Then $\R=\varprojlim_{n\ge1}\R/\m^n$ is a separated and complete
topological ring with a base of neighborhoods of zero formed
by the ideals $\m^n\subset\R$.
 Furthermore, $\R$ is pro-perfect, as all of its discrete quotient
rings are Artinian and consequently perfect.
 The maximal ideal $\m\subset\R$ is strongly closed and topologically
T\+nilpotent.

 By~\cite[Theorem~B.1.1]{Pweak} or~\cite[Example~2.2(4)]{Pper},
the forgetful functor $\R\contra\rarrow\R\modl$ is fully faithful,
so the abelian category of $\R$\+contramodules is a full subcategory
in the category of arbitrary $\R$\+modules.
 This full subcategory consists of all the so-called
\emph{$\m$\+contramodule $\R$\+modules}, which means
the $\R$\+modules $C$ such that $\Ext^i_\R(\R[s^{-1}],C)=0$ for
all $i=0$, $1$ and all $s\in\m$.
 It suffices to check this condition for any chosen set of generators
$s_1$,~\dots, $s_m\in\m$ of the ideal~$\m$ (or of any ideal in $\R$
whose radical is equal to~$\m$) by~\cite[Theorem~5.1]{Pcta}.

 According to Theorem~\ref{main-theorem}, all $\R$\+contramodules have
projective covers and all flat left $\R$\+contramodules are projective.
 Let us explain how to obtain these results from the previously
existing literature.
 An $\R$\+contramodule is flat if and only if it is flat as
an $\R$\+module~\cite[Lemma~B.9.2]{Pweak},
\cite[Corollary~10.3(a)]{Pcta}.
 All flat $\R$\+contramodules are projective
by~\cite[Corollary~B.8.2]{Pweak} or~\cite[Theorem~10.5]{Pcta}.
 Moreover, the projective objects of the category $\R\contra$ are
precisely the free $\R$\+contramodules $\R[[X]]=
\varprojlim_{n\ge1}(\R/\m^n)[[X]]$ (see~\cite[Lemma~1.3.2]{Pweak}
or~\cite[Corollary~10.7]{Pcta}).

 Concerning the projective covers, one observes that all
$\R$\+contramodules are Enochs cotorsion
$\R$\+modules~\cite[Proposition~B.10.1]{Pweak},
\cite[Theorem~9.3]{Pcta}.
 Let $\C$ be an $\R$\+contramodule, and let $f\:F\rarrow\C$ be
a flat cover of the $\R$\+module~$\C$.
 Let $p\:\P\rarrow\C$ be a surjective morphism onto $\C$ from
a projective $\R$\+contramodule $\P$ with the kernel~$\fK$.
 Then $\P$ is also a flat $\R$\+module, while $\fK$ is
a cotorsion $\R$\+module; so $p$~is a special flat precover of
the $\R$\+module~$\C$.
 It follows that the $\R$\+module $F$ is a direct summand of~$\P$;
hence $F$ is also an $\R$\+contramodule.
 Thus the morphism~$f$ is a projective cover of $\C$ in
the category $\R\contra$.
\end{ex}

\begin{ex} \label{S-completion-example}
 Let $R$ be a commutative ring and $S\subset R$ be a multiplicative
subset.
 The \emph{$S$\+topology} on an $R$\+module $M$ has a base of
neighborhoods of zero formed by the $R$\+submodules $sM\subset M$,
where $s\in S$.
 In particular, the ring $R$ itself is a topological ring in
the $S$\+topology.
 Let $\R=\varprojlim_{s\in S}R/sR$ be its completion, endowed
with the projective limit topology~\cite[Section~2]{PMat}.
 Then $\R$ is a complete, separated topological commutative ring 
with a base of neighborhoods of zero formed by open ideals.

 Assume that the quotient ring $R/sR$ is perfect for all $s\in S$.
 Then $\R$ is a pro-perfect commutative topological ring, so
Theorem~\ref{main-theorem} tells that all $\R$\+contramodules have
projective covers and all flat $\R$\+contramodules are projective.
 These results do not seem to follow easily from the previously
existing literature.

 Let us discuss the category of $\R$\+contramodules $\R\contra$ in
some more detail.
 Following the proof of Proposition~\ref{properfect-prop}(a),
the topological ring $\R$ decomposes as the topological product
$\R\cong\prod_{\G\in\R}\R_\G$ over the open ideals $\G\subset\R$
such that the quotient ring $\R/\G$ is a field.
 (The same argument allows to obtain such a decomposition in
the slightly more general case of an \emph{$S$\+h-nil} ring~$R$
\cite[Section~6]{BP}.)
 Such open ideals $\G\subset\R$ correspond bijectively to the maximal
ideals $\m\subset R$ for which the intersection $\m\cap S$ is
nonempty, and the topological ring $\R_\G$ can be described as
the $S$\+completion of the localization $R_\m$ of the ring~$R$.
 By Lemma~\ref{product-of-rings-lemma}(b), the abelian category
$\R\contra$ decomposes as the Cartesian product of the abelian
categories $\R_\G\contra$.

 By~\cite[Theorem~6.13]{BP}, the localization $R_S$ of the ring $R$
at the multiplicative subset $S$, viewed as an $R$\+module, has
projective dimension at most~$1$.
 Thus the full subcategory of \emph{$S$\+contramodule $R$\+modules}
$R\modl_{S\ctra}\subset R\modl$, consisting of all the $R$\+modules $C$
such that $\Ext_R^i(R_S,C)=0$ for $i=0$ and~$1$, is an abelian
category, and the identity embedding $R\modl_{S\ctra}\rarrow R\modl$
is an exact functor~\cite[Theorem~3.4(a)]{PMat}.
 The forgetful functor $\R\contra\rarrow R\modl$ factorizes as
$\R\contra\rarrow R\modl_{S\ctra}\rarrow R\modl$
\cite[Example~2.4(2)]{Pper}.
 We studied the category $R\modl_{S\ctra}$
in~\cite[Sections~4 and~6]{BP}.

 Still, the functor $\R\contra\rarrow R\modl_{S\ctra}$ is \emph{not}
an equivalence of categories, generally
speaking~\cite[Example~1.3(6)]{Pper}.
 A sufficient condition for it to be an equivalence is that
the $S$\+torsion of $R$ be bounded \cite[Example~2.4(3)]{Pper}.
 More generally, using the decomposition of the category
$\R\contra$ into the Cartesian product over the maximal ideals~$\m$
of the ring $R$ with $\m\cap S\ne\varnothing$ and the similar
decomposition of the category $R\modl_{S\ctra}$
\cite[Corollary~6.15]{BP}, one shows that the functor
$\R\contra\rarrow R\modl_{S\ctra}$ is an equivalence whenever
the $S$\+torsion in $R_\m$ is bounded for every~$\m$.
 When $S$ is countable, the functor $\R\contra\rarrow R\modl_{S\ctra}$
is an equivalence of categories if and only if the $S$\+torsion in
$\R=\varprojlim_{s\in S}R/sR$ is bounded~\cite[Example~5.4(2)]{Pper}.

 Furthermore, \cite[Example~3.7(1)]{Pper} lists two conditions
which, taken together, are sufficient for the functor
$\R\contra\rarrow R\modl_{S\ctra}$ to be fully faithful.
 By~\cite[Proposition~2.1(2)]{BP}, every $S$\+divisible $R$\+module
is $S$\+h-divisible, so one of the two conditions always holds in
our case.
 Hence the functor $\R\contra\rarrow R\modl_{S\ctra}$ is fully faithful
whenever the other condition holds, that is, whenever for every set
$X$ the free $\R$\+contramodule $\R[[X]]=\varprojlim_{s\in S}(R/sR)[X]$
is complete in its $S$\+topology (or in other words, its $S$\+topology
coincides with its projective limit topology~\cite[Theorem~2.3]{PMat}).

 In particular, the functor $\R\contra\rarrow R\modl_{S\ctra}$ is
fully faithful whenever $S$ is countable~\cite[Example~3.7(2)]{Pper}.
 In this case, every object of $R\modl_{S\ctra}$ is an extension of
two objects from $\R\contra$ \cite[Example~5.4(2)]{Pper}.
\end{ex}

\begin{ex}
 Let $\cC$ be a coassociative, counital coalgebra over a field~$k$.
 Then the dual vector space $\cC^*=\Hom_k(\cC,k)$ to the coalgebra $\cC$
has a natural structure of complete, separated topological ring
(in fact, topological algebra over~$k$) with a base of neighborhoods
of zero formed by open two-sided ideals.
 Moreover, any coassociative coalgebra over a field is the union of
its finite-dimensional subcoalgebras; so $\cC^*$ is the projective
limit of a directed diagram of finite-dimensional algebras (and
surjective morphisms between them).
 The discrete quotient rings of $\cC^*$ are the finite-dimensional
algebras dual to the finite-dimensional subcoalgebras of~$\cC$
\,\cite[Sections~1.1 and~2.2]{Swe}, \cite[Section~1.3]{Prev}.

 Since all finite-dimensional algebras are perfect, the topological
ring $\cC^*$ is pro-perfect.
 All closed $k$\+vector subspaces in the pro-finite-dimensional
vector space $\cC^*$ are strongly closed (see
Example~\ref{pro-finite-dimensional}).
 Moreover, denoting by $\cD\subset\cC$ the maximal cosemisimple
subcoalgebra in $\cC$, one observes that the coalgebra $\cD$ is
a direct sum of cosimple coalgebras, while the quotient coalgebra
without counit $\cC/\cD$ is conilpotent.
 In other words, this means that the closed two-sided ideal
$\fH=(\cC/\cD)^*\subset\cC^*$ is not only topologically left
T\+nilpotent, but even \emph{topologically nilpotent}, that is,
for any  neighborhood of zero $\U\subset\cC^*$ there exists
an integer $n\ge1$ such that $\fH^n\subset\U$.
 The quotient ring $\cC^*/\fH\cong\cD^*$ is a product of
simple finite-dimensional $k$\+algebras with the product
topology~\cite[Sections~8.0 and 9.0\+-9.1]{Swe},
\cite[Section~A.2]{Psemi}.

 Thus the condition~(iv) of Theorem~\ref{main-theorem} is satisfied
for~$\cC^*$.
 As we will see below in Theorem~\ref{generalized-main-theorem}, it
follows that the topological ring $\cC^*$ satisfies the condition~(d)
(though it does not need to satisfy~(a), (b), or~(c)).

 A discrete right module over $\cC^*$ is the same thing as
a right $\cC$\+comodule~\cite[Section~2.1]{Swe},
\cite[Section~1.4]{Prev}.
 All $\cC$\+comodules are unions of their finite-dimensional
subcomodules.
 Left contramodules over the topological ring $\cC^*$ are otherwise
known as left contramodules over the coalgebra~$\cC$
\,\cite[Section~2.3]{Prev}.
 According to the proof of Theorem~\ref{main-theorem}, it follows that
all flat $\cC$\+contramodules are projective (cf.~\cite[Sections~0.2.9
and~A.3]{Psemi}) and all $\cC$\+contramodules have projective covers.
\end{ex}

\Section{Generalization of Main Theorem}
\label{gener-main-theorem-secn}

 The aim of this section is to generalize the result of
Theorem~\ref{main-theorem} so that the class of topological rings
covered by its equivalent conditions includes all the rings
satisfying the assumptions formulated in the beginning of
Section~\ref{projectivity-of-flats-secn}.

 Let $\R$ be a complete, separated topological ring $\R$ with
a base of neighborhoods of zero formed by open right ideals.
 We are interested in the following condition on the topological
ring~$\R$, generalizing the conditions~(a\+c) of
Section~\ref{proof-of-main-theorem-secn}:
\begin{enumerate}
\renewcommand{\theenumi}{\alph{enumi}}
\setcounter{enumi}{3}
\item there is a topologically left T\+nilpotent strongly closed
two-sided ideal $\fK\subset\R$ such that the quotient ring $\R/\fK$ is
isomorphic, as a topological ring, to the product
$\prod_{\delta\in\Delta}\T_\delta$ of a family of topological rings
$\T_\delta$, each of which has a base of neighborhoods of zero
consisting of open two-sided ideals and satisfies one of
the conditions~(a), (b), or~(c) of
Section~\ref{proof-of-main-theorem-secn}.
\end{enumerate}
 Here the quotient ring $\R/\fK$ is endowed with the quotient topology
and the product of topological rings $\prod_\delta\T_\delta$ is endowed
with the product topology.
 The following example shows that a topological ring satisfying~(d)
does not need to have a base of neighborhoods of zero consisting of
open two-sided ideals.

\begin{ex} \label{adeles-example}
 Let $\R=\Hom_\boZ(\boQ\oplus\boQ/\boZ,\>\boQ\oplus\boQ/\boZ)^\rop$ be
the opposite ring to the ring of endomorphisms of the abelian group
$\boQ\oplus\boQ/\boZ$, endowed with the topology defined in
Section~\ref{prelim-endomorphism-ring}.
 This topological ring, occurring in tilting theory, was described
in~\cite[Example~8.4]{PS} as the matrix ring
$$
 \R=
 \begin{pmatrix}
 \boQ & \mathbb A^f_\boQ \\
 0 & \widehat\boZ
 \end{pmatrix},
$$
where $\widehat\boZ = \prod_p\boZ_p$ is the product over the prime
numbers~$p$ of the topological rings of $p$\+adic integers $\boZ_p$
endowed with the $p$\+adic topology, $\mathbb A^f_\boQ=\boQ\ot_\boZ
\widehat\boZ$ is the ring of finite adeles of the field of rational
numbers $\boQ$ endowed with the adelic topology, and the field
$\boQ$ itself is endowed with the discrete topology.

 Consider the two-sided ideal $\fK=\mathbb A^f_\boQ\subset\R$.
 Then one has $\fK^2=0$, so this ideal is even (finitely) nilpotent.
 It is also clearly strongly closed in~$\R$.
 The quotient ring $\R/\fK$ is commutative, so it satisfies
the condition~(a).
 The topological ring $\R$ has a base of neighborhoods of zero
formed by the open right ideals
$$
 \begin{pmatrix}
 0 & r\widehat\boZ \\
 0 & n\widehat\boZ
 \end{pmatrix}
 \subset\R,
$$
where $r\in\boQ_{>0}$ and $n\in\boZ_{>0}$ are an arbitrary positive
rational number and a positive integer.
 But every open two-sided (or even left) ideal in $\R$ contains $\fK$,
so such ideals do not form a base of neighborhoods of zero.
\end{ex}

\begin{ex} \label{backwards-morphisms-example}
 Let $\alpha$~be an ordinal, and let $(M_i)$ be an $\alpha$\+indexed
sequence of left modules over an associative ring~$R$.
 Assume that all the morphisms between the $R$\+modules $M_i$ go
backwards, that is, $\Hom_R(M_i,M_j)=0$ for all $0\le i<j<\alpha$.
 Let $\T_i=\Hom_R(M_i,M_i)^\rop$ be topological rings opposite to
the endomorphism rings of the $R$\+modules $M_i$, and let
$\S=\Hom_R(M,M)^\rop$ be the topological ring opposite to the ring
of endomorphisms of the $R$\+module $M=\bigoplus_{i<\alpha}M_i$.

 Then there is a natural surjective morphism of topological rings
$p\:\S\rarrow\prod_{i\in\alpha}\T_i$.
 Set $\fK=\ker(p)\subset\S$; then $\fK$ is a strongly closed two-sided
ideal in $\S$ and the topological quotient ring $\S/\fK$ is isomorphic
to the topological product $\prod_{i\in\alpha}\T_i$.
 Moreover, the ideal $\fK$ is topologically left T\+nilpotent, because
for every element $b\in M$ and any sequence of endomorphisms $a_1$,
$a_2$, $a_3$,~\dots~$\in\fK$ one has $ba_1a_2\dotsm a_n=0$ in $M$
for $n$ large enough (as one easily shows using K\H onig's lemma).

 Thus if for every $i<\alpha$ the topological ring $\T_i$ has a base of
neighborhoods of zero consisting of open two-sided ideals and satisfies
one of the conditions~(a), (b), or~(c) of
Section~\ref{proof-of-main-theorem-secn},
then the topological ring $\S$ satisfies the condition~(d).
 Moreover, if for every index~$i$ the topological ring $\T_i$
satisfies the condition~(d), then so does the topological ring $\S$
(as we will see below in Lemma~\ref{class-d-closure-properties}(b)).
\end{ex}

\begin{lem} \label{discrete-quotients-left-perfect-lemma}
 \textup{(a)} Let $(R_\gamma)_{\gamma\in\Gamma}$ be a family of topological
rings.  Then all the discrete quotient rings of the topological ring
$R=\prod_{\gamma\in\Gamma}R_\gamma$ are left perfect if and only if all
the discrete quotient rings of the topological rings $R_\gamma$,
$\gamma\in\Gamma$, are left perfect. \par
\textup{(b)} Let $R$ be a separated topological ring, and let
$K\subset R$ be a topologically left T\+nilpotent closed two-sided ideal.
 Then all the discrete quotient rings of the ring $R$ are left
perfect if and only if all the discrete quotient rings of the topological
ring $R/K$ are left perfect.
\end{lem}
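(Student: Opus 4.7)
For part~(a), the ``only if'' direction is immediate: each product projection $R\rarrow R_\gamma$ is a continuous surjective ring homomorphism, so every discrete quotient ring of~$R_\gamma$ is also a discrete quotient ring of~$R$. For the ``if'' direction, let $I\subset R$ be an open two-sided ideal. By definition of the product topology, $I$ contains a basic open subgroup of the form $\prod_{\gamma\notin\Delta}R_\gamma$ for some finite subset $\Delta\subset\Gamma$, so the surjective continuous ring homomorphism $\pi\:R\rarrow R_\Delta=\prod_{\gamma\in\Delta}R_\gamma$ satisfies $I=\pi^{-1}(\pi(I))$. Since $\pi$ is open, $\pi(I)$ is an open two-sided ideal in~$R_\Delta$. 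Two-sided ideals in a finite direct product of rings are componentwise, hence $\pi(I)=\prod_{\gamma\in\Delta}I_\gamma$ for open two-sided ideals $I_\gamma\subset R_\gamma$, and $R/I\cong\prod_{\gamma\in\Delta}R_\gamma/I_\gamma$. Each factor is left perfect by assumption, and a finite direct product of left perfect rings is left perfect (this is a classical consequence of Bass's characterization).

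For part~(b), the ``only if'' direction follows because any open two-sided ideal $J\subset R/K$ lifts to an open two-sided ideal $I\subset R$ containing $K$, and then $R/I\cong(R/K)/J$. For the ``if'' direction, fix an open two-sided ideal $I\subset R$. The ideal $K+I\subset R$ is open (containing~$I$), and its image $(K+I)/K$ is open in the quotient topology on $R/K$, so $R/(K+I)\cong(R/K)/((K+I)/K)$ is a discrete quotient of $R/K$ and hence left perfect by assumption.

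The plan is now to deduce that $R/I$ itself is left perfect by means of the Bass characterization. Consider the two-sided ideal $\bar K=(K+I)/I\subset R/I$, which satisfies $(R/I)/\bar K\cong R/(K+I)$. Every element of $\bar K$ is the class of some $k\in K$, so by topological left T\+nilpotency of $K$ in $R$, any sequence in $\bar K$ lifts to a sequence $a_1,a_2,\dotsc\in K$ whose partial products $a_1\dotsm a_n$ eventually belong to $I$; hence $\bar K$ is left T\+nilpotent as an ideal in the discrete ring $R/I$. In particular $\bar K$ is nil, so $\bar K\subset J(R/I)$, and therefore $J(R/I)/\bar K=J((R/I)/\bar K)=J(R/(K+I))$ is left T\+nilpotent, while $(R/I)/J(R/I)\cong(R/(K+I))/J(R/(K+I))$ is semisimple Artinian. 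Applying Lemma~\ref{T-nilpotent-in-quotients} in the discrete case to the short exact sequence $0\rarrow\bar K\rarrow J(R/I)\rarrow J(R/I)/\bar K\rarrow 0$ shows that $J(R/I)$ is itself left T\+nilpotent, so $R/I$ is left perfect.

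The main subtleties are purely bookkeeping: in~(a), verifying that openness of $I$ forces the finite-support reduction and that $\pi$ is an open map of topological rings; in~(b), checking that the image of a topologically T\+nilpotent ideal in a discrete quotient is T\+nilpotent in the classical sense, and being careful that $(K+I)/K$ really is open in the quotient topology on $R/K$ (which holds because $K+I$ is open in $R$ and the quotient map is open). No part of the argument requires assumptions beyond those in the statement.
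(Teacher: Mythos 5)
Your proof is correct and follows essentially the same route as the paper's: part~(b) is precisely the reduction to the discrete statement via Bass's Theorem~P(1) together with the discrete case of Lemma~\ref{T-nilpotent-in-quotients}, which is exactly what the paper does (the paper only sketches this, and you have filled in the details accurately, including the point that topological left T\+nilpotency of $K$ gives classical left T\+nilpotency of $(K+I)/I$ in $R/I$). In part~(a) you are in fact more careful than the paper's one-line assertion that the discrete quotients of the product are exactly the discrete quotients of the factors --- which, read literally, overlooks that a discrete quotient of $\prod_\gamma R_\gamma$ is in general only a quotient of a \emph{finite product} of discrete quotients of the $R_\gamma$ --- and your finite-support reduction together with the closure of left perfectness under finite products and quotients is the right way to make that step precise.
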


\begin{proof}
 Part~(a) holds, because the discrete quotient rings of $R$ are
the finite products of discrete quotient rings of $R_\gamma$, taken
over the finite subsets of the set~$\Gamma$.
 Furthermore, a finite product of left perfect rings is left
perfect.
 Part~(b) follows from its discrete version: if $\overline K$ is a left
T\+nilpotent two-sided ideal in an associative ring $\overline R$ and
the quotient ring $\overline R/\overline K$ is left perfect, then
the ring $R$ is left perfect.
 The latter is obtainable from the characterization of left perfect rings
in~\cite[Theorem~P\,(1)]{Bas} and the discrete version of
Lemma~\ref{T-nilpotent-in-quotients}.
\end{proof}

 The following theorem is our generalization of
Theorem~\ref{main-theorem}.

\begin{thm} \label{generalized-main-theorem}
 Let\/ $\R$ be a complete, separated topological associative ring with
a base of neighborhoods of zero formed by open right ideals.
 Assume that the condition~(d) is satisfied.
 Then the following conditions are equivalent:
\begin{itemize}
\item[(i)] all flat left\/ $\R$\+contramodules have projective covers;
\item[(i$^\flat$)] all Bass flat left\/ $\R$\+contramodules have
projective covers;
\item[(ii)] all left $\R$\+contramodules have projective covers;
\item[(iii)] all flat left\/ $\R$\+contramodules are projective;
\item[(iii$^\flat$)] all Bass flat left\/ $\R$\+contramodules
are projective;
\item[(iv)] $\R$ has a topologically left T\+nilpotent strongly closed
two-sided ideal\/ $\fH$ such that the quotient ring\/ $\R/\fH$ is
topologically isomorphic to a product of simple Artinian discrete
rings endowed with the product topology;
\item[(v)] all descending chains of cyclic discrete right\/
$\R$\+modules terminate; 
\item[(vi)] all discrete quotient rings of\/ $\R$ are left perfect;
\item[(vi$'$)] all discrete quotient rings of\/ $\R/\fK$ are left
perfect.
\end{itemize}
 Conversely, if a complete, separated topological associative ring
with a base of neighborhoods of zero formed by open right ideals
satisfies~(iv), then it also satisfies~(d).
\end{thm}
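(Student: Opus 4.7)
The plan is to deduce Theorem~\ref{generalized-main-theorem} from Theorem~\ref{main-theorem} applied factor-by-factor, plus the intrinsic results of Sections~\ref{projectivity-of-flats-secn}--\ref{existence-projcovers-secn} which only require condition~(iv). First, the trivial implications (ii)$\Rightarrow$(i)$\Rightarrow$(i$^\flat$), (iii)$\Rightarrow$(iii$^\flat$), (iii)$\Rightarrow$(i), and (iii$^\flat$)$\Rightarrow$(i$^\flat$) require no argument. Several further implications are valid for any $\R$ with a base of neighborhoods of zero formed by open right ideals: (iii$^\flat$)$\Rightarrow$(v) is Proposition~\ref{bass-projective-implies-coperfect} combined with Lemma~\ref{coperfectness-reformulated}; (iii$^\flat$)$\Rightarrow$(vi) is Corollary~\ref{discrete-quotients-left-perfect-if-bass-projective}; (i$^\flat$)$\Rightarrow$(vi) is Corollary~\ref{discrete-quotients-left-perfect-if-bass-covers}; and (v)$\Rightarrow$(vi) holds because any discrete quotient $R=\R/\I$ has all its cyclic right modules among the cyclic discrete right $\R$\+modules, so the descending chain condition on principal right ideals in $R$ passes from~(v), and $R$ is left perfect by Bass's Theorem~P. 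The equivalence (vi)$\Leftrightarrow$(vi$'$) follows from Lemma~\ref{discrete-quotients-left-perfect-lemma}(a) applied to the product decomposition $\R/\fK\cong\prod_\delta\T_\delta$, together with Lemma~\ref{discrete-quotients-left-perfect-lemma}(b) applied to the T\+nilpotent ideal $\fK\subset\R$.

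The main step is (vi$'$)$\Rightarrow$(iv). Given (vi$'$) and~(d), each $\T_\delta$ is a complete separated topological ring with a base of neighborhoods of zero formed by open two-sided ideals satisfying one of (a), (b), (c), and all of whose discrete quotients are left perfect. So Theorem~\ref{main-theorem} applies and yields a topologically left T\+nilpotent strongly closed two-sided ideal $\fH_\delta\subset\T_\delta$ such that $\T_\delta/\fH_\delta$ is a topological product of simple Artinian discrete rings. Setting $\fH'=\prod_\delta\fH_\delta\subset\prod_\delta\T_\delta\cong\R/\fK$, we obtain (using the functoriality facts collected in Section~\ref{products-secn}) that $\fH'$ is strongly closed in $\R/\fK$, topologically left T\+nilpotent in its induced topology, with $(\R/\fK)/\fH'\cong\prod_\delta(\T_\delta/\fH_\delta)$ again a topological product of simple Artinian discrete rings. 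Let $\fH\subset\R$ be the preimage of $\fH'$ under the projection $\R\to\R/\fK$. Then $\fK\subset\fH$ and $\fH/\fK\cong\fH'$; by Lemma~\ref{strongly-closed-in-quotients}(b), $\fH$ is strongly closed in~$\R$, and by Lemma~\ref{T-nilpotent-in-quotients}, $\fH$ is topologically left T\+nilpotent. Since $\R/\fH\cong(\R/\fK)/\fH'$, condition~(iv) holds.

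The remaining implications are direct appeals to the key results of Sections~\ref{projectivity-of-flats-secn} and~\ref{existence-projcovers-secn}: under~(iv) one has (iv)$\Rightarrow$(iii) by Corollary~\ref{flat-contramodules-are-projective-cor}, and (iv)$\Rightarrow$(ii) by Theorem~\ref{projective-covers-exist-thm}, neither of which requires a base of open two-sided ideals. Closing the loop, the long chain of implications (iv)$\Rightarrow$(iii)$\Rightarrow$(iii$^\flat$)$\Rightarrow$(vi)$\Leftrightarrow$(vi$'$)$\Rightarrow$(iv) (and likewise through (i$^\flat$)) gives the full equivalence. The final clause---that (iv)$\Rightarrow$(d)---is obtained by setting $\fK=\fH$, since a simple Artinian discrete ring trivially satisfies~(c) (it is its only semisimple discrete quotient), so a topological product of such rings realises the factorisation demanded by~(d).

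The only step with any substance is the lifting of $\fH'$ to $\fH$ in the main implication (vi$'$)$\Rightarrow$(iv); the potential obstacle is verifying strong closedness and topological T\+nilpotency of $\fH$ simultaneously, but both follow transparently from the composition lemmas already proved (Lemmas~\ref{strongly-closed-in-quotients}(b) and~\ref{T-nilpotent-in-quotients}) once one observes that products of strongly closed/T\+nilpotent ideals remain so in the product topology, as established in Section~\ref{products-secn}.
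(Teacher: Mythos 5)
Your proposal is correct and follows essentially the same route as the paper: the trivial and Bass-contramodule implications are cited identically, and your main step (vi$'$)$\Rightarrow$(iv) reproduces the paper's Proposition~\ref{d-implies-section-8-prop} — apply the (vi)$\Rightarrow$(iv) content of Theorem~\ref{main-theorem} (i.e.\ Proposition~\ref{properfect-prop}) to each factor $\T_\delta$, take the product ideal using the closure properties from Section~\ref{products-secn}, and pull back along $\R\rarrow\R/\fK$ via Lemmas~\ref{strongly-closed-in-quotients}(b) and~\ref{T-nilpotent-in-quotients}. The converse clause (iv)$\Rightarrow$(d) with $\fK=\fH$ is also exactly the paper's argument.
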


\begin{proof}
 The first four paragraphs of the proof of Theorem~\ref{main-theorem}
apply in our present context as well.
 Furthermore, as in Theorem~\ref{main-theorem},
the implication (iii$^\flat$)$\,\Longrightarrow\,$(vi) is provided
by Corollary~\ref{discrete-quotients-left-perfect-if-bass-projective},
the implication (i$^\flat$)$\,\Longrightarrow\,$(vi) by
Corollary~\ref{discrete-quotients-left-perfect-if-bass-covers},
the implication (iii$^\flat$)$\,\Longrightarrow\,$(v)
by Proposition~\ref{bass-projective-implies-coperfect} with
Lemma~\ref{coperfectness-reformulated}, and
the implication (v)$\,\Longrightarrow\,$(vi) is easy.
 The implication (vi)\,$\Longrightarrow$\,(vi$'$) is obvious,
and the inverse implication (vi$'$)\,$\Longrightarrow$\,(vi)
is provided by Lemma~\ref{discrete-quotients-left-perfect-lemma}(b).

 The final implication (vi$'$)\,$\Longrightarrow$\,(iv) holds in
the assumption of the condition~(d).
 This one, as well as the converse implication
(iv)\,$\Longrightarrow$\,(d), are provided by the following proposition.
 In other words, the proposition below shows that (iv)~is equivalent
to the combination of~(vi$'$) and~(d). 
\end{proof}

\begin{prop} \label{d-implies-section-8-prop}
 Let\/ $\R$ be a complete, separated topological ring with a base of
neighborhoods of zero formed by open right ideals.
 Suppose that there exists an ideal\/ $\fK\subset\R$ such that
the condition~(d) is satisfied and all the discrete quotient rings of
the ring\/ $\R/\fK$ are left perfect.
 Then there exists an ideal\/ $\fH\subset\R$ satisfying~(iv).
 Conversely, if an ideal\/ $\fH\subset\R$ satisfies~(iv), then
the same ideal\/ $\fK=\fH$ also satisfies~(d).
\end{prop}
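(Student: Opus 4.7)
The converse direction is immediate: if $\fH$ satisfies~(iv), then $\R/\fH\cong\prod_{\gamma\in\Gamma}S_\gamma$ with each $S_\gamma$ a simple Artinian discrete ring. A discrete ring has $\{0\}$ as an open two-sided ideal forming a (finite, hence countable) base of neighborhoods of zero, so each $S_\gamma$ satisfies condition~(b); thus $\fK=\fH$ witnesses~(d) with the factors $\T_\gamma=S_\gamma$.

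For the forward direction, the plan is to apply Proposition~\ref{properfect-prop} componentwise to the factors of the decomposition $\R/\fK\cong\prod_{\delta\in\Delta}\T_\delta$ and reassemble the resulting ideals through the quotient map $p\:\R\rarrow\R/\fK$. First, Lemma~\ref{discrete-quotients-left-perfect-lemma}(a) shows that all discrete quotient rings of every $\T_\delta$ are left perfect. Since each $\T_\delta$ has a base of open two-sided ideals and satisfies one of~(a), (b), or~(c), Proposition~\ref{properfect-prop} yields a topologically left T\+nilpotent strongly closed two-sided ideal $\fH_\delta\subset\T_\delta$ such that $\T_\delta/\fH_\delta$ is a topological product of simple Artinian discrete rings. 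Put
$$
 \fH'=\prod\nolimits_{\delta\in\Delta}\fH_\delta\;\subset\;\prod\nolimits_{\delta\in\Delta}\T_\delta=\R/\fK,
$$
and let $\fH\subset\R$ be the preimage of $\fH'$ under~$p$.

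It remains to verify that $\fH$ has the three properties demanded by~(iv). By the product discussion at the beginning of Section~\ref{products-secn}, the ideal $\fH'$ is strongly closed in $\R/\fK$ and topologically left T\+nilpotent. Since $\fK$ is strongly closed in $\R$ with $\R/\fK$ complete, Lemma~\ref{strongly-closed-in-quotients}(b) applied to the chain $\fK\subset\fH\subset\R$ shows $\fH$ is strongly closed in $\R$. Applying Lemma~\ref{T-nilpotent-in-quotients} with $\fK$ viewed as a closed two-sided ideal in the topological ring without unit $\fH$ then yields topological left T\+nilpotency of $\fH$ from that of $\fK$ and $\fH/\fK\cong\fH'$ (here one uses that, because $\fK\subset\fH$, the quotient topology on $\fH/\fK$ from $\fH$ coincides with the subspace topology on $\fH'\subset\R/\fK$). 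Finally,
$$
 \R/\fH\cong(\R/\fK)/(\fH/\fK)\cong\prod\nolimits_{\delta\in\Delta}(\T_\delta/\fH_\delta),
$$
and unfolding each factor as its own topological product of simple Artinian discrete rings exhibits $\R/\fH$ itself as such a product, as required. The only genuine technical step is the careful bookkeeping of strongly closed ideals and of the two natural topologies on the subquotient $\fH/\fK$, for which Lemma~\ref{strongly-closed-in-quotients} together with Section~\ref{products-secn} supply all closure properties needed.
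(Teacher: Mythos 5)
Your proof is correct and follows essentially the same route as the paper's: apply Proposition~\ref{properfect-prop} to each factor $\T_\delta$, take the product of the resulting ideals (using the closure properties from Section~\ref{products-secn}), pull back along $p\:\R\rarrow\R/\fK$, and invoke Lemmas~\ref{strongly-closed-in-quotients}(b) and~\ref{T-nilpotent-in-quotients} for strong closedness and topological left T\+nilpotency of the preimage. Your explicit remarks on the two topologies on $\fH/\fK$ and the use of Lemma~\ref{discrete-quotients-left-perfect-lemma}(a) are slightly more careful bookkeeping of points the paper leaves implicit, but the argument is the same.
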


\begin{proof}
 The converse assertion is obvious: any simple Artinian ring endowed
with the discrete topology satisfies both~(b) and~(c), so a product of
such rings is a product of topological rings satisfying~(b) and~(c).
 To prove the direct implication, suppose that $\fK\subset\R$ is
an ideal satisfying~(d) such that all the discrete quotient rings of
$\R/\fK$ are left perfect.
 Then we have $\R/\fK\cong\prod_{\delta\in\Delta}\T_\delta$, so any
discrete quotient ring of $\T_\delta$ is at the same time a discrete
quotient ring of~$\R/\fK$.

 Applying Proposition~\ref{properfect-prop} to the topological ring
$\T_\delta$, we conclude that there exists a topologically left
T\+nilpotent strongly closed two-sided ideal $\J_\delta\subset\T_\delta$
such that the quotient ring $\S_\delta=\T_\delta/\J_\delta$ is
topologically isomorphic to a product of discrete simple Artinian rings,
$\S_\delta\cong\prod_{\gamma\in\Gamma_\delta}S_{\gamma}$.
 According to the discussion in the beginning of
Section~\ref{products-secn}, it follows that $\J=\prod_\delta\J_\delta$
is a topologically left T\+nilpotent strongly closed two-sided ideal
in $\T=\prod_\delta\T_\delta$.
 Furthermore, the topological quotient ring $\T/\J\cong
\prod_{\delta\in\Delta}\S_\delta$ is isomorphic to the topological 
product $\prod_{\gamma\in\Gamma}S_\gamma$ of the discrete simple
Artinian rings $S_\gamma$ over the disjoint union
$\Gamma=\coprod_{\delta\in\Delta}\Gamma_\delta$ of the sets of
indices~$\Gamma_\delta$.

 Now we have a surjective continuous ring homomorphism $\R\rarrow
\R/\fK\cong\T$.
 Let $\fH\subset\R$ be the full preimage of the closed ideal
$\J\subset\T$ under this homomorphism.
 Then the ideal $\fH$ is strongly closed in $\R$ by
Lemma~\ref{strongly-closed-in-quotients}(b), $\fH$~is topologically
left T\+nilpotent by Lemma~\ref{T-nilpotent-in-quotients},
and the topological ring $\R/\fH\cong\T/\J$ is the topological
product of discrete simple Artinian rings~$S_\gamma$.
\end{proof}

 The following lemma shows the class of all topological rings
satisfying~(d) is closed under the operations that were used
to define it.

\begin{lem} \label{class-d-closure-properties}
 \textup{(a)} Let $(\R_\gamma)_{\gamma\in\Gamma}$ be a family of topological
rings satisfying the condition~(d).
 Then the topological ring\/ $\R=\prod_{\gamma\in\Gamma}\R_\gamma$ also
satisfies~(d). \par
\textup{(b)} Let\/ $\R$ be a complete, separated topological ring with
a base of neighborhoods of zero formed by open right ideals, and let\/
$\J\subset\R$ be a topologically left T\+nilpotent strongly closed
two-sided ideal.
 Assume that the topological quotient ring\/ $\R/\J$ satisfies~(d).
 Then the topological ring\/ $\R$ satisfies~(d).
\end{lem}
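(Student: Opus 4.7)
The plan is to assemble both parts from lemmas already established, treating condition~(d) as a data package consisting of a strongly closed topologically left T-nilpotent two-sided ideal plus a product decomposition of the quotient.

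For part~(a), I would choose for each $\gamma$ an ideal $\fK_\gamma \subset \R_\gamma$ witnessing~(d), so that $\R_\gamma/\fK_\gamma \cong \prod_{\delta \in \Delta_\gamma} \T_{\gamma,\delta}$ with each $\T_{\gamma,\delta}$ in class (a), (b), or~(c). Define $\fK = \prod_{\gamma} \fK_\gamma \subset \R$. The three things to verify are all recorded in the introductory discussion of Section~\ref{products-secn}: a product of topologically left T-nilpotent topological rings without unit is again topologically left T-nilpotent in the product topology; a product of strongly closed subgroups is strongly closed; and the product topology on $\R/\fK = \prod_\gamma \R_\gamma/\fK_\gamma$ agrees with the quotient topology. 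Flattening the doubly indexed product gives $\R/\fK \cong \prod_{(\gamma,\delta)} \T_{\gamma,\delta}$ over the disjoint union, which is the required decomposition.

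For part~(b), I would lift the structure from $\R/\J$. Apply~(d) to $\R/\J$: there is a topologically left T-nilpotent strongly closed two-sided ideal $\fL \subset \R/\J$ with $(\R/\J)/\fL \cong \prod_\delta \T_\delta$ of the required form. Let $\fK \subset \R$ be the preimage of $\fL$ under the natural surjection $\R \rarrow \R/\J$; this is a closed two-sided ideal containing $\J$, with $\fK/\J = \fL$, and $\R/\fK \cong (\R/\J)/\fL$ as topological rings (the quotient topology in two stages coincides with the one-step quotient topology, since $\R/\J$ carries the quotient topology by assumption). Strong closedness of $\fK$ in $\R$ follows from Lemma~\ref{strongly-closed-in-quotients}(b) applied to the chain $\J \subset \fK \subset \R$, and topological left T-nilpotency of $\fK$ follows from Lemma~\ref{T-nilpotent-in-quotients} applied to the same chain (viewing $\fK$ and $\J$ as separated topological rings without unit in the induced topologies).

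Neither part presents a genuine obstacle: the argument is bookkeeping against Section~\ref{products-secn} and Lemmas~\ref{strongly-closed-in-quotients} and~\ref{T-nilpotent-in-quotients}. The only point deserving a sentence of care is the identification of the product topology with the quotient topology in part~(a), and the identification of the two-step quotient topology with the one-step quotient topology in part~(b); both are standard and were recorded in Section~\ref{prelim-quotient-topologies} and the beginning of Section~\ref{products-secn}. Note also that this is exactly the construction already carried out in the proof of Proposition~\ref{d-implies-section-8-prop}, which can be cited nearly verbatim for part~(b).
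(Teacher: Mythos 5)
Your proposal is correct and follows the same route as the paper: the paper's proof is exactly "similar to the proof of Proposition~\ref{d-implies-section-8-prop}," with part~(a) resting on the discussion at the beginning of Section~\ref{products-secn} and part~(b) on Lemmas~\ref{strongly-closed-in-quotients}(b) and~\ref{T-nilpotent-in-quotients}, which are precisely the ingredients you assemble.
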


\begin{proof}
 Similar to the proof of Proposition~\ref{d-implies-section-8-prop}.
 The proof of part~(a) is based on the discussion in the beginning of
Section~\ref{products-secn}, while the proof of part~(b) uses
Lemmas~\ref{strongly-closed-in-quotients}(b)
and~\ref{T-nilpotent-in-quotients}.
\end{proof}

\begin{rem}
 More generally, for any complete, separated topological ring $\R$
with a base of neighborhoods of zero formed by open right ideals,
the four conditions (i), (ii), (iii), and~(iv) are equivalent to
each other.
 This is a result of the paper~\cite[Theorem~14.1]{PS3}.
 Furthermore, in the same (full) generality, the two conditions
(i$^\flat$) and~(iii$^\flat$) are equivalent to each other
by~\cite[Corollary~3.10]{BPS}.

 A conjecture claiming all the conditions (i), (i$^\flat$), (ii),
(iii), (iii$^\flat$), (iv), and~(v) to be equivalent to each other is
formulated in the paper~\cite{PS3} as~\cite[Conjecture~14.3]{PS3}.
 This conjecture is true for all complete, separated topological rings
$\R$ with a \emph{countable} base of neighborhoods of zero consisting
of open right ideals~\cite[Theorem~14.8]{PS3}.
\end{rem}

\bigskip

\end{document}